\documentclass[11pt]{article}
\usepackage[utf8]{inputenc}
\usepackage{amsfonts}
\usepackage{graphicx}
\usepackage{amsmath}
\usepackage{amssymb}
\usepackage{bbm}
\usepackage{hyperref}
\usepackage{cleveref}
\usepackage{mathtools}
\usepackage[affil-it]{authblk}
\usepackage[dvipsnames]{xcolor}
\usepackage{caption}
\usepackage{placeins}
\usepackage{diagbox}
\usepackage{tabularx}
\usepackage{cancel}
\usepackage[symbol]{footmisc}
\usepackage{ntheorem}
\usepackage[T1]{fontenc}
\usepackage{textcomp}
\usepackage[
backend=biber,
style=numeric,
sorting=nyt
]{biblatex}

\usepackage{scalerel}
\usepackage{stackengine,wasysym}

\newtheorem{definition}{Definition}

\newtheorem{theorem}{Theorem}
\newtheorem{lemma}{Lemma}[section]
\newtheorem{proposition}{Proposition}
\newtheorem{remark}{Remark}[section]

\usepackage{geometry}
\numberwithin{equation}{section}%

\usepackage{setspace}
\hypersetup{hidelinks}

\newcommand\reallywidetilde[1]{\ThisStyle{%
  \setbox0=\hbox{$\SavedStyle#1$}%
  \stackengine{-.1\LMpt}{$\SavedStyle#1$}{%
    \stretchto{\scaleto{\SavedStyle\mkern.2mu\AC}{.5150\wd0}}{.6\ht0}%
  }{O}{c}{F}{T}{S}%
}}

\newcommand{\E}{\mathbbm{E}}
\newcommand{\R}{\mathbbm{R}}
\newcommand{\bcdot}{\ensuremath{\boldsymbol{\cdot}}}

\newcommand{\tr}{{\scriptscriptstyle \rm T}}
\DeclareMathOperator*{\esssup}{ess\,sup}

\newcommand\warning[1]{\color{black} #1 \color{black} }

\newcommand{\CQFD}{\begin{flushright}     $\square$ \end{flushright}}

\usepackage{amsmath,array}

\title{Interpretation of stochastic primitive equations with relaxed hydrostatic assumption as a higher order approximation of 3D stochastic Navier-Stokes}

\author[1]{Arnaud Debussche}
\author[2]{Étienne Mémin}
\author[,2]{Antoine Moneyron \thanks{Corresponding author: antoine.moneyron@inria.fr}}

\affil[1]{Univ Rennes, CNRS, IRMAR UMR 6625, F35000, Rennes, France.}
\affil[2]{Univ Rennes, INRIA, IRMAR UMR 6625, F35000, Rennes, France.}

\date{\today}

\begin{document}

\maketitle
\thispagestyle{empty}

\begin{abstract}

\begin{center}
    \rule{0.5\linewidth}{1pt}
\end{center}

\vspace{2mm}

In this paper, we investigate the convergence of solutions of a stochastic representation of the three-dimensional Navier-Stokes equations to those of their primitive equations counterpart. Our analysis covers both weak and strong convergence regimes, corresponding respectively to rigid-lid and "fully periodic" boundary conditions. Furthermore, we explore the impact of relaxing the hydrostatic assumption in the stochastic primitive equations by retaining martingale terms as deviations from hydrostatic equilibrium. This modified model, obtained through a specific asymptotic scaling accessible only within the stochastic framework, captures non-hydrostatic effects while remaining within the primitive equations formalism. The resulting generalized hydrostatic model has been shown to be well-posed when the additional terms are regularized using a suitable filter \warning{for divergence-free noises under suitable assumptions.} Within this setting, we demonstrate that the model provides a higher-order approximation of the 3D Navier-Stokes equations for appropriately scaled noises.

\end{abstract}

%%%%%% Main Text %%%%%%

\newpage

\setcounter{page}{1} %Start the actual document on page 1

%\renewcommand*\contentsname{Table of contents}
%\tableofcontents

\newpage

\section{Introduction}

Representing ocean fluid flows through stochastic modeling is a dynamic and evolving area of research. Due to the vast range of interacting scales, from large-scale solar heating and atmospheric forcing to small-scale turbulent dissipation, it is computationally infeasible to resolve turbulent fluid motion explicitly with high accuracy. Consequently, only approximate models are tractable for practical applications. Furthermore, three-dimensional turbulent flows exhibit strong chaotic behavior and can give rise to spontaneous stochasticity due to the non-uniqueness of solutions. Stochastic modeling has emerged as a powerful and flexible framework for capturing unresolved variability and uncertainty in such systems \cite{Berner-Al_2017, FO_2017, FOBWL_2015, MTV_1999}. Societal needs related to climate change, economic and strategic concerns, as well as the monitoring of catastrophic events, also call for a transition toward probabilistic forecasting. Such approaches aim to provide physically relevant ensembles of realistic realizations, enabling, in particular, the quantification of uncertainties associated with specific events.

Stochastic modelling and calculus provide a rigorous framework for addressing these challenges. Early models in this domain were largely phenomenological, grounded in turbulence studies that often involved energy backscattering across scales \cite{Leith_1990, MT_1992}, and introduced stochastic parameterizations through multiplicative random forcing \cite{BMP_1999, Shutts_2005}. However, the noise variance in such models is typically uncontrolled \emph{a priori}, necessitating the addition of an eddy viscosity term to absorb excess energy. While the precise form of this viscosity remains uncertain, it is commonly derived using the debated Boussinesq turbulence assumption to obtain plausible approximations \cite{Schmitt_2007}. Moreover, when random forcing is introduced without any connection to underlying conservation laws, the resulting systems often suffer from limited accuracy and reduced interpretability \cite{CDMR_2018}.

Over the past decade, the location uncertainty (LU) approach has been developed and studied to propose physically consistent stochastic models \cite{Mémin_2014, TML_2023} based on the work of \cite{MR_2005}. This approach is based on a stochastic version of the Reynolds transport theorem, which is applied to the conservation of mass, momentum, and energy \cite{Mémin_2014}. This framework has been applied to various classical geophysical models \cite{BCCLM_2020, RMC_2017_Pt1, RMC_2017_Pt2, RMC_2017_Pt3}, reduced-order models \cite{RMHC_2017, RPMC_2021, TCM_2021}, and large eddy simulation models \cite{CMH_2020, CHLM_2018, HM_2017}. Its physical relevance has also been tested on prototypical flow models \cite{BCCLM_2020, BLBM_2021, CDMR_2018}. In addition, the well-posedness of a stochastic version of the LU 2D Navier-Stokes equation has been established, as well as the existence of a martingale solution in the 3D case \cite{DHM_2023}. Additionally, the study revealed that the stochastic model remains continuous as the noise vanishes, establishing a strong consistency with the deterministic Navier-Stokes equations. The noise considered in the LU framework is referred to as transport noise, which has been the subject of significant research within the mathematics community due to the need for well-posedness in fluid dynamics models \cite{AHHS_2022, AHHS_2022_preprint, BS_2021, CFH_2019, DHM_2023, FGL_2021, FGP_10, FL_2021, GCL_2023, LCM_2023, MR_2005}. Moreover, this type of noise has been shown to be associated with enhanced dissipation and mixing phenomena \cite{FGL_2022, FR_2023}.

In the deterministic framework, the primitive equations are commonly used to model geophysical flows \cite{Vallis2017}. These equations are derived from the 3D Navier-Stokes equations, assuming that vertical acceleration is negligible. This leads to the classical hydrostatic equilibrium, where the vertical derivative of pressure is related to density fluctuations. Although, this balance is physically valid in large-scale ocean dynamics, it breaks down outside the shallow water regime or in the presence of thermodynamic effects, such as deep convection. This assumption is further referred to as the \emph{strong hydrostatic hypothesis}. Notably, this deterministic model is known to be well-posed \cite{CT_2007} under appropriate boundary conditions (rigid-lid), and its stochastic versions have been largely explored by the mathematical community. As a matter of fact, the well-posedness of the stochastic primitive equations with multiplicative noise was demonstrated in \cite{DGHT_2011, DGHTZ_2012}, and more recently, with a specific class of regular transport noise in \cite{BS_2021}. In a more recent study, it was shown that the stochastic primitive equations with transport noise, similar to the LU framework, are well-posed under the strong hydrostatic hypothesis, using water world boundary conditions \cite{AHHS_2022}. However, this work assumes that the horizontal noise is independent of the vertical axis, which simplifies the treatment of the barotropic and baroclinic noises. This implies that the noise is bidimensional if divergence-free using rigid-lid boundary condition.

As pointed out previously, the strong hydrostatic balance does not allow to represent deep convection phenomena with strong up or down-welling of water. The LU setting allows to relax easily the strong hydrostatic balance, by considering the martingale terms of the vertical acceleration as deviation terms. This yields a generalisation of the primitive equations, which has been studied in \cite{DMM2025}. There, the authors proposed various models to interpret the relaxed (or weak) hydrostatic assumption. This modification, which captures non-hydrostatic effects while remaining within the framework of the primitive equations, arises from a specific scaling that is only accessible in the stochastic setting. In particular, they proved the well-posedness of a low-pass filtered version of the model with relaxed hydrostatic assumption, using the fact that the horizontal noise is independent of the vertical axis as in \cite{AHHS_2022}. %\warning{We anticipate that this assumption could be relaxed by considering small enough baroclinic component of the noises, e.g. compared to the (molecular) viscosity.}

In addition, it has been shown that the solutions to the weak solutions to the 3D Navier-Stokes equations converge to the ones of the primitive equations for a vanishing aspect ratio \cite{LT2019}, \warning{for a domain that is periodic in the three directions.} A similar result about the convergence of the strong solutions is also established, proving that the (deterministic) 3D Navier-Stokes are well-posed for thin enough domains. 

Following a similar method, we analyse the asymptotic error between the LU 3D Navier-Stokes equations to the low-pass filtered LU primitive equations, in the stochastic setting. One noticeable difference is that we use rigid-lid boundary conditions to study the $L^2-H^1$ convergence. However, we commute back to the "fully periodic" domain in the study of the $H^1-H^2$ convergence. In particular, we show that \warning{there exist regimes where the regularised tridimensional LU Navier-Stokes equations can be efficiently approximated by the LU primitive equations with (regularised) weak hydrostatic assumption, but not by the strongly hydrostatic ones.} \warning{The parameters of interest are the aspect ratio $\epsilon$ and a vertical noise coefficient $\alpha_\sigma$. In turn, we show that the regime mentioned above corresponds to $\alpha_\sigma = o(\epsilon^{-1})$ but $\alpha_\sigma \neq o(\epsilon^{-1/2})$. When $\alpha_\sigma = o(\epsilon^{-1/2})$, we also show the convergence towards the strongly hydrostatic primitive equations.}

Our study goes beyond extending deterministic results of \cite{LT2019} to a stochastic setting, the main challenge being to find a suitable framework in which we can use the tools of stochastic calculus. Specifically, we introduce a scaled gradient, to express the pressure terms arising in the scaled Navier-Stokes equations, as well as a scaled inner product. Then, we define a ``modified'' Leray projector to cancel these scaled gradient terms, in order to use the tools of the classical Leray theory. This is close in spirit to what was done in \cite{BS_2021} for the primitive equations. In particular, this ``modified'' projector is crucial in our stochastic setting, since It\={o}'s lemma yields covariation contributions. Thus, our projector allows to cancel the terms stemming from the scaled gradient of the stochastic pressure. In this framework, we infer that the blow-up time of the scaled Navier-Stokes equations tends to infinity in probability for a vanishing aspect ratio. This stands as a generalisation of the well-posedness result of the Navier-Stokes equations for thin enough domains in the stochastic context. Notice that, to obtain a result similar as that of the deterministic setting, we would typically need almost sure energy estimates to use the arguments of \cite{LT2019}, while we only have second order moment estimate in our study. This difficulty and type of result are common for SPDEs.

In the following we introduce the LU framework and precise our assumptions. Then we describe the abstract setting of this study and state our main results. After this, we prove the convergence of the weak solution of LU 3D Navier-Stokes equations to the ones of the LU primitive equations, with rigid-lid boundary conditions. Thus, the solutions of the latter equations with strong hydrostatic hypothesis stand as zeroth order approximations of the solutions of the former, when the primitive equations noise is bidimensional. Moreover, our estimates allow to interpret the weakly hydrostatic LU primitive equations as a higher order approximation of the LU Navier-Stokes equations, using a specific scaling on the vertical velocity noise contribution. We finally establish a similar result for strong solutions, in "fully periodic" boundary conditions.

\section{The LU formalism}\label{sec-LU-formalism}

In this section, $\mathcal{S}_0$ denotes a 3D bounded spatial domain. The LU formulation is based on the following time-scale separation of the Lagrangian displacement $X$ of flow:
\begin{equation}
    dX_t = u(X_t,t) dt + \sigma(X_t,t) dW_t.
\end{equation}
Importantly, this decomposition must be interpreted in the It\={o} sense. Moreover, the velocity component $u(X_t,t)$ denotes the large-scale Eulerian velocity (correlated in both space and time), and $\sigma(X_t,t) dW$ is a highly oscillating unresolved velocity (uncorrelated in time but correlated in space). The latter term is referred to as the (displacement) noise term.

Let us define this noise term more precisely. Consider a cylindrical Wiener process $W$ in the space of square integrable functions $\mathcal{W} := L^2(\mathcal{S}_0,\R^3)$. Then there exists a Hilbert orthonormal basis $(e_i)_{i \in \mathbbm{N}}$ of $\mathcal{W}$, and a sequence of independent standard Brownian motions $(\hat{\beta}^i)_{i \in \mathbbm{N}}$ on a filtered probability space $(\Omega, \mathcal{F}, (\mathcal{F}_t)_t, \mathbbm{P})$ such that,
\begin{equation*}
    W=\sum_{i \in \mathbbm{N}} \hat{\beta}^i e_i.
\end{equation*}
Notice, however, that the sum $\sum_{i \in \mathbbm{N}} \hat{\beta}^i e_i$ does \emph{not} converge in $\mathcal{W}$. Thus, the above identity only makes sense in a larger space $\mathcal{U} \supset \mathcal{W}$, such that the embedding $\mathcal{W} \hookrightarrow \mathcal{U}$ is Hilbert-Schmidt. For instance, $\mathcal{U}$ may be taken as the dual  of a reproducing kernel Hilbert subspace
of $\mathcal{W}$ for the inner product $(\bcdot, \bcdot)_{\mathcal{W}}$, e.g. $H^{-s}(\mathcal{S}_0, \R^3)$ with $s > \frac{3}{2}$. We then define the noise through a deterministic, time dependent correlation operator $\sigma_t$. Let $\hat{\sigma} : [0,T] \rightarrow L^2(\mathcal{S}_0^2,\R^3)$ be a bounded symmetric kernel and define
\begin{equation*}
    (\sigma_t f)(x) = \int_{\mathcal{S}_0} \hat{\sigma}(x,y,t) f(y) dy, \quad \forall f \in \mathcal{W}.
\end{equation*}
With this definition, $\sigma_t$ is a Hilbert-Schmidt operator mapping $\mathcal{W}$ into itself, so that the noise can be written as
\begin{equation*}
    \sigma_t W_t = \sum_{i \in \mathbbm{N}} \hat{\beta}_t^i \sigma_t e_i,
\end{equation*}
where the series converges in $\mathcal{W}$ almost surely, and in $L^p(\Omega, \mathcal{W})$ for all $p \in \mathbbm{N}$. We interpret $\mathcal{W}$ as the space carrying the Wiener process $W_t$, while we retain the  notation $L^2(\mathcal{S}_0,\R^3)$ to denote the space of tridimensional velocities.

Moreover, there exists a Hilbert basis $(\phi_n)_n$ of  $\mathcal{W}$ consisting of eigenfunctions of the correlation operator $\sigma_t$, scaled  by their corresponding eigenvalues. By a change of basis, there exists a sequence of independent standard Brownian motions $(\beta_t^k)_k$, defined on the same filtered probability space, such that
\begin{equation*}
    \sigma_t W_t = \sum_{k \in \mathbbm{N}} \beta_t^k \phi_k.
\end{equation*}
As such, $(\Omega, \mathcal{F}, (\mathcal{F}_t)_t, \mathbbm{P}, W)$ forms a stochastic basis. We may associate a covariance tensor to the  random field $\sigma W_t$. Given two points  $x,y \in \mathcal{S}_0$ , and two times $t,s \in \R^+$, define the covariance tensor $Q$ formally by 
\begin{equation*}
    Q(x,y,t,s) \delta_{t,s} =  \E[(\sigma_t dW_t)(x) (\sigma_s dW_s)(y)] = \int_{\mathcal{S}_0} \hat{\sigma}(x,z,t) \hat{\sigma}(z,y,s) dz \delta(t-s).
\end{equation*}
The diagonal part of this covariance tensor (in the sense of a 4x4 tensor) is referred to as the variance tensor, and is denoted
\begin{equation}
    a(x,t) = \int_{\mathcal{S}_0} \hat{\sigma}(x,y,t) \hat{\sigma}(y,x,t) dy = \sum_{k=0}^\infty \phi_k(x,t) \phi_k(x,t)^\tr \in \R^{3 \times 3}.
\end{equation}
We assume in addition that the variance tensor satisfies the integrability condition  $a \in L^1([0,T], L^2(\mathcal{S}_0, \R^{ 3\times 3}))$. In the more general setting -- when $\hat{\sigma}_t$ is itself a random function -- the random matrix-valued process $a$ must satisfy the following integrability condition,
\begin{equation*}
    \E \int_0^T \|a(\bcdot,t)\|_{L^2(\mathcal{S}_0, \R^{ 3\times 3})} < \infty,
\end{equation*}
where the norm $\| \bcdot \|_{L^2(\mathcal{S}_0, \R^{ 3\times 3}) }$ is the Hilbert norm on the space  $L^2(\mathcal{S}_0, \R^{ 3\times 3})$, equipped with  the Frobenius matrix norm. Under these conditions, the stochastic integral $\int_0^t \sigma_s dW_s $ defines a $\mathcal{W}$-valued Gaussian process with zero mean and finite variance: $\E\big[\|\int_0^t \sigma_s dW_s\|_{L^2}^2\big]<\infty$. Finally, the quadratic variation of $\int_0^t (\sigma_t d W_s)(x)$ is given by the bounded variation process $\int_0^t a(x,s) ds.$

Similarly to the classical derivation of the Navier-Stokes equations,  the LU Navier-Stokes equations are derived using a stochastic version of the Reynolds transport theorem (SRTT) \cite{Mémin_2014}. Let $q$ be a random scalar quantity defined within a volume $\mathcal{V}(t)$ transported by the flow. For incompressible unresolved flows -- i.e., when $\nabla \bcdot \sigma_t=0$ -- the SRTT reads
\begin{gather}
    d\Big( \int_{\mathcal{V}(t)} q(x,t) dx \Big) = \int_{\mathcal{V}(t)} \big(\mathbbm{D}_t q + q \nabla \bcdot (u-u_s) dt\big) dx,\\
    \mathbbm{D}_t q = d_t q + (u - u_s) \bcdot \nabla q \: dt + \sigma dW_t \bcdot \nabla q - \frac{1}{2} \nabla \bcdot (a \nabla q) dt, \label{transport-operator}
\end{gather}
where an additional drift $u_s = \frac{1}{2} \nabla \bcdot a$, termed the It\={o}-Stokes drift in \cite{BCCLM_2020}, appears. Here, $d_t q(x,t) = q(x,t+dt) - q(x,t)$ is the forward time increment at a fixed spatial point $x$, and $\mathbbm{D}_t q$ is a stochastic transport operator introduced in \cite{Mémin_2014, RMC_2017_Pt1}, playing the role of the material derivative. Moreover, the It\={o}-Stokes drift is directly related to the divergence of the variance tensor $a$, which accounts for the effects of noise inhomogeneity on large scale dynamics. Such advection terms are commonly added as corrective terms in large-scale simulations, to represent the impact of surface waves and Langmuir turbulence \cite{CL_1976, MSM_1997}. As shown in \cite{BCCLM_2020}, the LU framework exhibits similar features, generalizing the effects of the small-scale inhomogeneity on the large-scale flow.

Furthermore, the stochastic transport operator features physically interpretable terms that contribute to a large-scale representation of fluid flows. The last term on the right-hand side of \eqref{transport-operator} is an inhomogeneous diffusion, representing the mixing effects induced by unresolved small-scale variability. This stochastic diffusion is entirely determined by the variance tensor of the noise and may be viewed as a matrix-valued generalization of the classical Boussinesq eddy viscosity assumption. The third term on the right-hand side corresponds to the transport of the large-scale quantity, 
$q$, by the unresolved (random) velocity fluctuations. Remarkably, the energy injected by this backscattering term is exactly balanced by the dissipation due to the stochastic diffusion term, as shown in \cite{RMC_2017_Pt1}. This energetic balance can be interpreted as a direct manifestation of a fluctuation-dissipation relation at the level of the coarse-grained equations.

\section{Scaled Navier-Stokes equations and primitive equations in the LU formalism} \label{sec:derivation-NS-PE}

To investigate the convergence of the Navier-Stokes equations to the primitive equations within the LU framework, we begin by deriving a scaled version of the former, which formally converges to the latter in the limit of vanishing aspect ratio. Consider the LU Navier-Stokes equations posed on a thin domain $\mathcal{S_\epsilon} = \mathcal{S}_H \times (-\epsilon, \epsilon) \subset \R^3$, where $\mathcal{S}_H\subset \R^2$ is a bounded domain with smooth boundary $\partial \mathcal{S}_H$. The parameter $\epsilon >0$ represents the aspect ratio, defined as $\epsilon = \frac{H}{L}$, with  $H$ and $L$ denoting the characteristic vertical and horizontal length scales, respectively.

Throughout, we adopt the shorthand notations  $\nabla_H = (\partial_x \: \partial_y)^\tr$ and $\nabla_3 = (\partial_x \: \partial_y \: \partial_z)^\tr$, to denote the horizontal and full spatial gradients. Accordingly,  for a scalar or vector-valued field $f$, we define the horizontal and three-dimensional Laplace operators as
\begin{equation}
    \Delta_H f := \nabla_H \bcdot (\nabla_H f) = (\partial_{xx} + \partial_{yy})f, \quad \Delta_3 f := \nabla_3 \bcdot (\nabla_3 f) = (\partial_{xx} + \partial_{yy} + \partial_{zz})f.
\end{equation}
We now state the three-dimensional LU Navier-Stokes equations, which are derived by applying  the SRTT to the mass and momentum equations. The velocity field is decomposed as $u := (v \: w)^\tr$, where $v$ denotes the horizontal component and $w$ the vertical component of the velocity. Additionally, $\theta$ denotes a passive tracer, typically representing temperature or  salinity.

Within the LU formalism, we refer to $\Upsilon^{1/2} \sigma dW_t$ as the noise term of the LU Navier-Stokes equation, and distinguish its horizontal and vertical components so that $\Upsilon^{1/2} \sigma dW_t = \Upsilon^{1/2} (\sigma^H dW_t \: \sigma^z dW_t)$. The prefactor $\Upsilon^{1/2}$ encapsulates the scaling of the noise. The corresponding variance tensor is then $\Upsilon a$, implying  that $\Upsilon$ carries the physical unit of a viscosity, e.g. $m^2 s^{-1}$.  Moreover, we decompose the variance tensor as follows, 
$$a := \begin{pmatrix} a_H & a_{Hz} \\ a_{zH} & a_{zz} \end{pmatrix}$$
The It\={o}-Stokes drift is given by $u_s := \frac{1}{2} \nabla \bcdot a$ and we introduce the notations
\begin{align*}
    (v_s \: w_s)^\tr := u_s,\quad (v^* \: w^*)^\tr := u^* := u-u_s = (v - v_s \: \: \:  w - w_s)^\tr.
\end{align*}
Now we present the LU Navier-Stokes equations. In this formulation, the pressure is assumed to be a semimartingale process, denoted by  $dp_t^{SM} := p dt + dp_t^\sigma$, where $p dt$ is the bounded variation part and $dp_t^\sigma$ is the martingale component. In addition, we introduce a scaling parameter $\alpha_\sigma$, \warning{to emphasize the difference between the vertical and horizontal components of the noise. It allows for a direct comparison with the setting presented in \cite{DMM2025}. In that work, the stochastic contribution in  the vertical momentum equation are retained to derive a relaxed, or weak, form of the hydrostatic balance. As we shall see, in the regime $\alpha_\sigma^2$ large and $\alpha_\sigma \epsilon$ small,  the weak hydrostatic hypothesis provides a better approximation than the strong one.}

\paragraph{Navier-Stokes equations with linear law of state (NS)} \hfill\\
For $(x,t) \in \mathcal{S}_\epsilon \times \R_+$, the (LU) Navier-Stokes equations with linear law of state (NS) read,
\begin{align}
    d_t v + \Big((u^* dt + \Upsilon^{1/2} &\sigma dW_t) \bcdot \nabla_3\Big) v - \frac{\Upsilon}{2} \nabla_3 \bcdot(a \nabla_3 v) dt \nonumber\\
    &- \Delta_{\mu,\nu} (v dt + \Upsilon^{1/2} \sigma^H dW_t) + \frac{1}{\rho_0} \nabla_H (p\:dt + dp_t^{\sigma}) = 0, \label{NS-basic-v}\\
    d_t w + \Big((u - \alpha_\sigma^2 u_s&) dt + \alpha_\sigma \Upsilon^{1/2} \sigma dW_t) \bcdot \nabla_3\Big) w - \frac{\alpha_\sigma^2 \Upsilon}{2} \nabla_3 \bcdot(a \nabla_3 w) dt \nonumber\\
    &- \Delta_{\mu,\nu} (w dt + \alpha_\sigma \Upsilon^{1/2} \sigma^z dW_t) + \frac{1}{\rho_0} \partial_z (p\:dt + dp_t^{\sigma}) + \frac{\rho}{\rho_0} g = 0, \label{NS-basic-w}\\
    d_t \theta + \Big((u^* dt &+ \Upsilon^{1/2} \sigma dW_t) \bcdot \nabla_3\Big) \theta - \frac{\Upsilon}{2} \nabla_3 \bcdot(a \nabla_3 \theta) dt - \Delta_{\mu,\nu} \theta dt = 0 \label{NS-basic-theta},\\
    \nabla_H &\bcdot v + \partial_z w = 0, \qquad \text{ and }  \qquad \rho = \rho_0(1+\theta). \label{NS-div-and-rho}
\end{align}
Here, the anisotropic viscosity operator is given by  $\Delta_{\mu,\nu} := \mu \Delta_H + \nu \partial_{zz}$, where $\mu$ and $\nu$ are the horizontal and vertical viscosities, respectively. As is standard in the thin-domain literature \cite{PF2001, LT2019}, we assume different scalings for horizontal and vertical viscosities, namely $\mu = O(1)$ and $\nu = O(\epsilon^2)$. For simplicity, we follow \cite{LT2019} and choose $\mu = 1$ and $\nu = \epsilon^2$.

The noise term $\sigma dW_t$ is also assumed to be divergence-free:
\begin{equation}
    \nabla_H \bcdot \sigma^H dW_t + \partial_z \sigma^z dW_t = 0.
\end{equation}
 Note that equations \eqref{NS-basic-v} and \eqref{NS-basic-theta} can be interpreted as instances of the SRTT with a noise term $\sigma dW_t$, applied to the horizontal momentum and the tracer. Similar yet different, equation \eqref{NS-basic-w} is obtained by applying the SRTT to the vertical momentum, with  a rescaled noise term $\alpha_\sigma \sigma dW_t$. This distinction is crucial for capturing the weak hydrostatic regime discussed earlier.

 In the deterministic setting, the approach developed in \cite{LT2019} is to consider scaled variables which take values in $\mathcal{S} = \mathcal{S}_H \times [-1,1]$. Regarding the bounded variation terms involved in (NS), we essentially use the same scalings. However, the presence of martingale terms calls for a noise structure given a priori, so that we recover the dynamics of the LU primitive equations at the limit $\epsilon \rightarrow 0$.

\warning{
\subsection{Discussion about the noise structure}

\subsubsection{The bidimensional case}

Before presenting the rescaled version of the LU Navier-Stokes equations, we discuss the structure of the noise $\sigma dW_t$. Since the existence of global solutions to the LU primitive equations has been showed in \cite{DMM2025} for bidimensional noises, these are the type of noises we consider in our proofs. We also assume that they are divergence-free. Let $W^{2D}$ bidimensional cylindrical Wiener process over $\mathcal{S}_H$, and $\sigma_0^{2D} dW_t^{2D}$ the noise to appear in the primitive equations. Then, the (non-scaled) Navier-Stokes noise is chosen as $\sigma^{2D} dW_t^{2D} = \sigma_0^{2D} dW_t^{2D}$.

Equivalently, it is possible to define this type of noises through tridimensional Wiener processes. Consider now $W$ a cylindrical Wiener process over $\mathcal{S}_\epsilon$, and Hilbert-Schmidt operators $\sigma$ acting on $L^2(\mathcal{S}_\epsilon,\R^3)$ or $L^2(\mathcal{S},\R^3)$. We also denote by $\hat{\sigma}$ their associated kernels. Moreover, we denote by $\tilde{W}(x,y,z) := \frac{1}{\sqrt{\epsilon}} W(x,y,\epsilon z)$, which is a cylindrical Wiener process on $\mathcal{S}$ by the scaling property. Thus, assuming that the primitive equation noise $\sigma_0 d\tilde{W}_t$ is bidimensional is equivalent to
\begin{gather}
    \hat{\sigma}_0^{H} (x,y,z,x',y',z') = \hat{\sigma}_{0}^{H} (x,y,0, x',y',0), \\
    \hat{\sigma}_0^{z} (x,y,z,x',y',z') = 0.
\end{gather}
Then, we define the Navier-Stokes noise $\sigma_{NS} dW_t = (\sigma_{NS}^{H} dW_t, \sigma_{NS}^{z} dW_t)^\tr$ through its kernel: for all $(x,y,z), (x',y',z') \in \mathcal{S}_\epsilon$,
\begin{gather}
    \hat{\sigma}_{NS}^{H} (x,y,z,x',y',z') = \frac{1}{\epsilon\sqrt{\epsilon}} \hat{\sigma}_{0}^{H} (x,y,0, x',y',0), \\
    \hat{\sigma}_{NS}^{z} (x,y,z,x',y',z') = 0.
\end{gather}
The velocity $u_\epsilon = (v_\epsilon, w_\epsilon)^\tr$ and noise $\sigma_\epsilon dW_t = (\sigma_\epsilon^H dW_t, \sigma_\epsilon^z dW_t)^\tr$ appearing in the scaled Navier-Stokes equations are defined by scaling the Navier-Stokes quantities (see \cite{LT2019} for an analogous derivation in the deterministic case),
\begin{align*}
    v_\epsilon(x,y,z,t) = v(x,y,\epsilon z,t), && \sigma_\epsilon^H dW_t(x,y,z,t) = \sigma^{H} dW_t(x,y,\epsilon z,t),\\
    w_\epsilon(x,y,z,t) = \frac{1}{\epsilon} w(x,y,\epsilon z,t), && \sigma_\epsilon^z dW_t(x,y,z,t) = \frac{1}{\epsilon} \sigma^{z} dW_t^z(x,y,\epsilon z,t).
\end{align*}
Additionally, reminding that $a := \begin{pmatrix} a_H & a_{Hz} \\ a_{zH} & a_{zz} \end{pmatrix}$ denotes the \emph{non-rescaled} variance tensor, we define the \emph{rescaled} variance tensor as $a^\epsilon := \begin{pmatrix} a_H^\epsilon & a_{Hz}^\epsilon \\ a_{zH}^\epsilon & a_{zz}^\epsilon \end{pmatrix}$, where
\begin{gather*}
    a_H^\epsilon(x,y,z,t) = a_H(x,y,\epsilon z,t), \quad a_{zH}^\epsilon(x,y,z,t) = \frac{1}{\epsilon} a_{zH}(x,y,\epsilon z,t), \\
    a_{Hz}^\epsilon(x,y,z,t) = \frac{1}{\epsilon} a_{Hz}(x,y,\epsilon z,t), \quad
    a_{zz}^\epsilon(x,y,z,t) = \frac{1}{\epsilon^2} a_{zz}(x,y,\epsilon z,t).
\end{gather*}
Hence, we can also define the rescaled It\={o}-Stokes drift $u_s^\epsilon = (v_s^\epsilon \: w_s^\epsilon)^\tr$, by setting
$$v_s^\epsilon(x,y,z,t) = v_s(x,y,\epsilon z,t), \quad w_s^\epsilon(x,y,z,t) = \frac{1}{\epsilon} w_s(x,y,\epsilon z,t),$$
where $u_s = (v_s \: w_s)^\tr$ denote the non-rescaled It\={o}-Stokes drift. Then, we define $u_\epsilon^* = (v_\epsilon^* \: w_\epsilon^*)^\tr$ with $v_\epsilon^* = v_\epsilon - v_s^\epsilon$ and $w_\epsilon^* = w_\epsilon - w_s^\epsilon$.

The noise defined above can be rewritten in terms of $\tilde{W}$, which is a cylindrical Wiener process on $\mathcal{S}$. It yields ultimately $\sigma_\epsilon dW_t = \sigma_0 d\tilde{W}_t$. The derivation is performed below in the more general context of ``true'' tridimensional noises.

\subsubsection{The tridimensional case}

Even if our main results only cover bidimensional noises, we present the tridimensional noise setting below, as it gives some insights about the rescaling arguments. Considering a general primitive equations noise $\sigma_0 d\tilde{W}_t$, we define the (non-scaled) Navier-Stokes noise $\sigma dW_t = (\sigma^{H} dW_t, \sigma^{z} dW_t)^\tr$, for all $(x,y,z), (x',y',z') \in \mathcal{S}_\epsilon$, through
\begin{gather}
    \hat{\sigma}^{H} (x,y,z,x',y',z') = \frac{1}{\epsilon\sqrt{\epsilon}} \hat{\sigma}_{0}^{H} (x,y, \epsilon^{-1}z, x',y',\epsilon^{-1}z'), \label{eq:noise-struct-H}\\
    \hat{\sigma}^{z} (x,y,z,x',y',z') = \frac{1}{\sqrt{\epsilon}} \hat{\sigma}_{0}^{z} (x,y, \epsilon^{-1} z,x',y', \epsilon^{-1} z'). \label{eq:noise-struct-z}
\end{gather}
Notice that in this case, $\sigma dW_t$ is divergence-free whenever $\sigma d\tilde{W}_t$ is. Moreover, this choice leads to a scaled Navier-Stokes noise that is equal to $\sigma_0 d\tilde{W}_t$. To see this, we perform first a change of variable on $\sigma_\epsilon^H dW_t$,
\begin{align*}
    \sigma_\epsilon^H (x,y,z) dW_t &= \int_{\mathcal{S_\epsilon}} \hat{\sigma}^{H}(x,y,\epsilon z,x',y',z') dW_t(x',y',z') dx' dy'dz'\\
    &=\epsilon \int_{\mathcal{S}} \hat{\sigma}^{H} (x,y,\epsilon z,x',y',\epsilon z'') dW_t(x',y',\epsilon z'') dx' dy'dz''\\
    &= \epsilon \sqrt{\epsilon} \int_{\mathcal{S}} \hat{\sigma}^{H} (x,y,\epsilon z,x',y',\epsilon z'') d\tilde{W}_t(x',y', z'') dx' dy'dz'' = \sigma_{0}^H(x,y, \epsilon z) d\tilde{W}_t.
\end{align*}
Similarly,
\begin{align*}
    \sigma_\epsilon^z (x,y,z) dW_t &= \frac{1}{\epsilon} \int_{\mathcal{S_\epsilon}} \hat{\sigma}^{z}(x,y,\epsilon z,x',y',z') dW_t(x',y',z') dx' dy'dz'\\
    &= \int_{\mathcal{S}} \hat{\sigma}^{z} (x,y,\epsilon z,x',y',\epsilon z'') dW_t(x',y',\epsilon z'') dx' dy'dz''\\
    &= \sqrt{\epsilon} \int_{\mathcal{S}} \hat{\sigma}^{z} (x,y,\epsilon z,x',y',\epsilon z'') d\tilde{W}_t(x',y', z'') dx' dy'dz'' = \sigma_{0}^z(x,y, z) d\tilde{W}_t.
\end{align*}
Using the decomposition $\sigma_0 d\tilde{W}_t = \sum_k \phi_k^0 d\beta_k$, we define $a^0 = \sum_k \phi_k^0 (\phi_k^0)^\tr$ and $u_s^0 = \frac{1}{2} \nabla \bcdot a_0$. Thus, it holds that $a^\epsilon = a^0$ and $u_s^\epsilon = u_s^0$.

Eventually, we can remark that, upon assuming $\sigma_0^H dW_t$ is independent of $z$, we recover the type of noise proposed in \cite{AHHS_2022}. Another possibility to recover this type of noises is to consider a general reference kernel $\hat{\sigma}_{ref}$, and a real number $\gamma \in [0,1)$. Then, we define the following (non-rescaled) Navier-Stokes noise,
\begin{gather}
    \hat{\sigma}^{H} (x,y,z,x',y',z') = \frac{1}{\epsilon\sqrt{\epsilon}} \hat{\sigma}_{ref}^{H} (x,y, \epsilon^{-\gamma}z, x',y', \epsilon^{-\gamma}z'),\\
    \hat{\sigma}^{z} (x,y,z,x',y',z') = \frac{1}{\sqrt{\epsilon}} \hat{\sigma}_{ref}^{z} (x,y, \epsilon^{-1} z,x',y', \epsilon^{-1} z').
\end{gather}
Thus, the kernel of the scaled noise would be,
\begin{gather}
    \hat{\sigma}_\epsilon^{H} (x,y,z,x',y',z') = \hat{\sigma}_{ref}^{H} (x,y, \epsilon^{1-\gamma} z, x',y', \epsilon^{1-\gamma} z') \\
    \hat{\sigma}^{z} (x,y,z,x',y',z') = \hat{\sigma}_{ref}^{z} (x,y, z,x',y', z').
    \end{gather}
As such, the scaled noise defined above converges in law to a noise of the desired form. However, if we impose a divergence-free condition on $\sigma_0 dW_t$, together with the rigid-lid boundary conditions, then $\sigma_0 dW_t$ must be bidimensional. Consequently, the noises above are more general only for divergent noises. This setting is typically relevant in the context of weakly compressible dynamics, which emerges traditionally from the Boussinesq assumption.
}

\subsection{Scaled Navier-Stokes equations and primitive equations} \label{subsec:Navier-Stokes-and-primitive}

We derive below the scaled Navier-Stokes equations, on the scaled domain $\mathcal{S} = \mathcal{S}_H \times (-1, 1)$. For this purpose, we define the following scaled variables,
\begin{align*}
    v_\epsilon(x,y,z,t) = v(x,y,\epsilon z,t), && \sigma_\epsilon^H dW_t(x,y,z,t) = \sigma^{H} dW_t(x,y,\epsilon z,t),\\
    w_\epsilon(x,y,z,t) = \frac{1}{\epsilon} w(x,y,\epsilon z,t), && \sigma_\epsilon^z dW_t(x,y,z,t) = \frac{1}{\epsilon} \sigma^{z} dW_t^z(x,y,\epsilon z,t),\\
    p_\epsilon(x,y,z,t) = p(x,y,\epsilon z,t), && dp_t^{\sigma,\epsilon}(x,y,z,t) = dp_t^{\sigma}(x,y,\epsilon z,t),\\
    \rho_\epsilon - \rho_0 = \epsilon (\rho - \rho_0)(x,y,\epsilon z,t), && \theta_\epsilon(x,y,z,t) = \epsilon \theta(x,y,\epsilon z,t).
\end{align*}

\begin{remark}\hfill
    Thus far, $\epsilon$ has been interpreted as the aspect ratio $\epsilon := \frac{H}{L}$, where $H$ and $L$ are the typical vertical and horizontal lengths, respectively. However, more general scalings may be considered. For instance, one could introduce the scaled parameter $\check{\epsilon}$, defined by
    $$\check{\epsilon}^2 = \frac{Fr^2 Ro}{Bu}\epsilon^2 = Ro^3 Bu^{-2} \epsilon^2$$
    where $Ro = \frac{U}{f_0 L}$, $Bu = \frac{R_d^2}{L^2}$ and $Fr^2 = \frac{U^2}{gH} = Ro^2 Bu^{-1}$, $R_d = \frac{(gH)^{1/2}}{f_0}$ being the Rossby deformation radius, $f_0$ the Coriolis parameter and $U$ the typical velocity \cite{pedlosky2013geophysical}.
\end{remark}
Following the derivation of \cite{LT2019}, we find the scaled Navier-Stokes equations under location uncertainty (SNS). For consistency, we formulate our equations with the cylindrical Wiener process $\tilde{W}$ on $\mathcal{S}$.

\paragraph{Scaled Navier-Stokes equation (SNS$_{\alpha_\sigma}^\epsilon$)}
For $(x,t) \in \mathcal{S} \times \R_+$, 
\begin{align}
    d_t v_\epsilon + \Big((u_\epsilon^* dt &+ \Upsilon^{1/2} \sigma_0 d\tilde{W}_t) \bcdot \nabla_3\Big) v_\epsilon - \frac{\Upsilon}{2} \nabla_3 \bcdot(a^0 \nabla_3 v_\epsilon) dt \nonumber\\
    &- \Delta_3 (v_\epsilon dt + \Upsilon^{1/2} \sigma_0^H d\tilde{W}_t) + \frac{1}{\rho_0} \nabla_H (p_\epsilon\:dt + dp_t^{\sigma, \epsilon}) = 0, \\
    \epsilon^2 \Bigg(d_t w_\epsilon + \Big((u_\epsilon^* dt &+ \alpha_\sigma \Upsilon^{1/2} \sigma_0 d\tilde{W}_t) \bcdot \nabla_3\Big) w_\epsilon - \frac{\alpha_\sigma^2 \Upsilon}{2} \nabla_3 \bcdot(a^0 \nabla_3 w_\epsilon) dt \nonumber\\
    &- \Delta_3 (w_\epsilon dt + \alpha_\sigma \Upsilon^{1/2} \sigma_0^z d\tilde{W}_t)\Bigg) + \frac{1}{\rho_0} \partial_z (p_\epsilon\:dt + dp_t^{\sigma, \epsilon}) + \frac{\rho_\epsilon}{\rho_0} g = 0, \label{eq:SNS-w}\\
    d_t \theta_\epsilon + \Big((u_\epsilon^* dt + &\Upsilon^{1/2} \sigma_0 d\tilde{W}_t) \bcdot \nabla_3\Big) \theta_\epsilon - \frac{\Upsilon}{2} \nabla_3 \bcdot(a^0 \nabla_3 \theta_\epsilon) dt - \Delta_3 \theta_\epsilon dt = 0,\\
    \nabla_H &\bcdot v_\epsilon + \partial_z w_\epsilon = 0, \qquad \text{ and } \qquad \rho_\epsilon = \rho_0(1+\theta_\epsilon),
\end{align}
with again, a divergence-free assumption on the noise,
\begin{equation}
    \nabla_3 \bcdot \sigma_0 d\tilde{W}_t = \nabla_H \bcdot \sigma_0^H d\tilde{W}_t + \partial_z \sigma_0^z d\tilde{W}_t = 0. \label{eq-NS-scaled-div-free-noise}
\end{equation}
As mentioned earlier, $\sigma_0 d\tilde{W}_t$ is chosen to be the noise involved in the primitive equations below. Importantly, the Wiener processes $\tilde{W}$ and $W$ are not supported by the same spaces: $\tilde{W}$ is defined on the ``scaled'' domain $\mathcal{S} = \mathcal{S}_H \times (-1,1)$, while $W$ is defined on the ``physical'' domain $\mathcal{S}_H \times (-\epsilon,\epsilon)$. Nevertheless, the divergence-free assumption on $\sigma_\epsilon dW_t$ implies that its limit $\sigma d\tilde{W}_t$ is also divergence-free. 

To cancel the pressure terms, we introduce hereafter the scaled gradient operator $\nabla_\epsilon$, defined, for any $f \in H^1(\mathcal{S}, \R)$, as
\begin{equation}
    \nabla_\epsilon f = \begin{pmatrix}
        \nabla_H f \\
        \frac{1}{\epsilon^2} \partial_{z} f
    \end{pmatrix}.
\end{equation}
Hence, the system (SNS$_{\alpha_\sigma}^\epsilon$) can be written more compactly as
\begin{align}
    d_t u_\epsilon + (u_\epsilon^* \bcdot \nabla_3) u_\epsilon dt &+ \Upsilon^{1/2} (\sigma_0 d\tilde{W}_t \bcdot \nabla_3) I_{\alpha_\sigma} u_\epsilon - \frac{\Upsilon}{2} \nabla_3 \bcdot(a^0 \nabla_3 I_{\alpha_\sigma^2} u_\epsilon) dt \nonumber\\
    - \Delta_3 (u_\epsilon dt &+ \Upsilon^{1/2} I_{\alpha_\sigma} \sigma_0 d\tilde{W}_t) + \frac{1}{\rho_0} \nabla_\epsilon (p_\epsilon\:dt + dp_t^{\sigma, \epsilon}) + \frac{\rho_\epsilon}{\epsilon^2 \rho_0} g \bold{e}_z = 0, \label{eq:SNS-noreg-u}\\
    d_t \theta_\epsilon + \Big((u_\epsilon^* dt + &\Upsilon^{1/2} \sigma_0 d\tilde{W}_t) \bcdot \nabla_3\Big) \theta_\epsilon - \frac{\Upsilon}{2} \nabla_3 \bcdot(a^0 \nabla_3 \theta_\epsilon) dt - \Delta_3 \theta_\epsilon dt = 0, \label{eq:SNS-noreg-theta}\\
    \nabla_H &\bcdot v_\epsilon + \partial_z w_\epsilon = 0, \qquad \text{ and } \qquad \rho_\epsilon = \rho_0(1+\theta_\epsilon), \label{eq:SNS-noreg-div-density}
\end{align}
where we define $I_{\alpha} = Diag(1, 1, \alpha)$, for all $\alpha > 0$. Again, we assume that the incompressibility condition \eqref{eq-NS-scaled-div-free-noise} holds.

Notice that the first term of equation \eqref{eq:SNS-w} involves terms of order $\epsilon^2$, $\alpha_\sigma \epsilon^2$ and $\alpha_\sigma^2 \epsilon^2$. By analogy with the deterministic case, where the primitive equations are formally recovered by disregarding the terms scaling with $\epsilon^2$, we infer the expression of the primitive equations under location uncertainty with strong hydrostatic hypothesis (PE). They are found by disregarding all the terms mentioned above.
\paragraph{Primitive equations with strong hydrostatic hypothesis (PE)} \hfill \\
For $(x,t) \in \mathcal{S} \times \R_+$,
\begin{align}
    d_t v + \Big((u^* dt + \Upsilon^{1/2} \sigma_0 d\tilde{W}_t) \bcdot \nabla_3\Big) v &- \frac{\Upsilon}{2} \nabla_3 \bcdot(a \nabla_3 v) dt - \Delta_3 (v dt + \Upsilon^{1/2} \sigma_0 d\tilde{W}_t^H) \nonumber \\
    &\qquad \qquad \qquad \qquad + \frac{1}{\rho_0} \nabla_H (p\:dt + dp_t^{\sigma}) = 0,\\
    d_t \theta + \Big((u^* dt + \Upsilon^{1/2} \sigma_0 d\tilde{W}_t) \bcdot \nabla_H\Big) \theta &- \frac{\Upsilon}{2} \nabla_3 \bcdot(a \nabla_3 \theta) dt - \Delta_3 \theta dt = 0,\\
    \partial_z dp_t^{\sigma} = 0, \quad &\text{ and } \quad \partial_z p = -\rho g, \\
    \nabla_H \bcdot v + \partial_z w = 0, \quad &\text{ and } \quad \rho = \rho_0(1+\theta).
\end{align}
However, when $\alpha_\sigma \gg 1$, the terms scaling like $\alpha_\sigma \epsilon^2$ and $\alpha_\sigma^2 \epsilon^2$ become much larger than those of scaling $\epsilon^2$. Reasoning similarly as in \cite{DMM2025}, we retain only the former, so that we obtain the non-regularised primitive equations with weak hydrostatic assumptions, which are defined below. Thus, this model is intermediate between the LU primitive equations (PE) and the LU scaled Navier-Stokes equations (SNS$_{\alpha_\sigma}^\epsilon$).
\paragraph{Non-regularised primitive equations with weak hydrostatic assumption} \hfill \\
For $(x,t) \in \mathcal{S} \times \R_+$,
\begin{align}
    d_t v + \Big((u^* dt + \Upsilon^{1/2} \sigma_0 d\tilde{W}_t) \bcdot &\nabla_3\Big) v - \Delta_3 (v dt + \Upsilon^{1/2} \sigma_0^H d\tilde{W}_t) \nonumber \\
    &+ \frac{1}{\rho_0} \nabla_H (p\:dt + dp_t^{\sigma}) - \frac{\Upsilon}{2} \nabla_3 \bcdot(a^0 \nabla_3 v) dt= 0,\\
    d_t \theta + \Big((u^* dt + \Upsilon^{1/2} \sigma_0 &d\tilde{W}_t) \bcdot \nabla_3\Big) \theta - \frac{\Upsilon}{2} \nabla_3 \bcdot(a^0 \nabla_3 \theta) dt - \Delta \theta dt = 0,\\
    \frac{1}{\rho_0} \partial_z p = -\frac{\rho}{\rho_0} &g + \alpha_\sigma^2 \epsilon^2 \frac{\Upsilon}{2} \nabla_3 \bcdot (a^0 \nabla_3 w),\\
    \frac{1}{\rho_0} \partial_z dp_t^{\sigma} = - \alpha_\sigma \epsilon^2 [(\Upsilon^{1/2} \sigma_0 &d\tilde{W}_t \bcdot \nabla) w] + \alpha_\sigma \epsilon^2 \Delta (\Upsilon^{1/2} \sigma_0^z d\tilde{W}_t), \\
    \nabla_H \bcdot v + \partial_z & w = 0, \quad \text{ and } \quad \rho = \rho_0(1+\theta).
\end{align}
Regarding the martingale pressure term, we observe that the horizontal momentum equation includes the additional contribution
\begin{equation}
    \alpha_\sigma \epsilon^2 \nabla_H \int_z^1 [(\Upsilon^{1/2} \sigma_0 d\tilde{W}_t \bcdot \nabla) w] dz'.
\end{equation}
This expression involves third-order derivatives in $v$, since $w$ is determined through the divergence-free condition as $w = \int_z^1 (\nabla_H \bcdot v) dz'$. To regularise this term, we introduce a convolution kernel $K$, and define a smoothed  approximation of the above system,  (PE$_{\alpha_\sigma}^\epsilon$). By construction  the systems (PE$_{\alpha_\sigma}^\epsilon$) and (PE) coincide whenever $\epsilon = 0$ or $\alpha_\sigma =0$.

\paragraph{(Regularised) primitive equations with weak hydrostatic assumption (PE$_{\alpha_\sigma}^\epsilon$)} \hfill \\
For $(x,t) \in \mathcal{S} \times \R_+$,
\begin{align}
    d_t v + \Big((u^* dt + \Upsilon^{1/2} \sigma_0 d\tilde{W}_t) \bcdot &\nabla_3\Big) v - \Delta_3 (v dt + \Upsilon^{1/2} \sigma_0^H d\tilde{W}_t) \nonumber \\
    &+ \nabla_H (p\:dt + dp_t^{\sigma}) - \frac{\Upsilon}{2} \nabla_3 \bcdot(a^0 \nabla_3 v) dt= 0,\\
    \partial_z p = -\frac{\rho}{\rho_0} &g + \alpha_\sigma^2 \epsilon^2 \frac{\Upsilon}{2} \nabla_3 \bcdot (a^K \nabla_3 w),\\
    \partial_z dp_t^{\sigma} = - \alpha_\sigma \epsilon^2 K*[(\Upsilon^{1/2} \sigma_0 &d\tilde{W}_t \bcdot \nabla) w] + \alpha_\sigma \epsilon^2 \Delta (\Upsilon^{1/2} \sigma_0^z d\tilde{W}_t), \label{eq:weak-PE-dzP} \\
    \nabla_H \bcdot v + \partial_z w = 0, \text{ and } \: \nabla_H &\bcdot \sigma_0^H d\tilde{W}_t + \partial_{z} \sigma_0^z d\tilde{W}_t = 0, \text{ and } \: \rho = \rho_0(1 + \theta),\\
    d_t \theta + \Big((u^* dt + \Upsilon^{1/2} \sigma_0 &d\tilde{W}_t) \bcdot \nabla_3\Big) \theta - \frac{\Upsilon}{2} \nabla_3 \bcdot(a^0 \nabla_3 \theta) dt - \Delta \theta dt = 0,
\end{align}
where we denote by $a^K[\bcdot] := \sum_{k=1}^\infty \phi_k \mathcal{C}_K^* \mathcal{C}_K [\phi_k^\tr \bcdot]$, so that stands the term $\nabla_3 \bcdot (a^K \nabla_3 w)$ stands as a regularisation of $\nabla_3 \bcdot (a \nabla_3 w)$. If $K$ is regular enough -- e.g. $K \in H^3(\mathcal{S}, \R^3)$ -- the well-posedness of (PE$_{\alpha_\sigma}^\epsilon$) under suitable boundary conditions is ensured -- see \cite{DMM2025}. Moreover, we recover the primitive equations system (PE) by setting $K=0$ and disregarding the additive noise term in equation \eqref{eq:weak-PE-dzP}. This regularisation by filtering can be interpreted as cutting the highest frequencies of the noise term $(\sigma d\tilde{W}_t \bcdot \nabla) w$ in the vertical momentum equation. Do so typically increases the spatial length of this term, yet tempers the irregular dynamics arising from the higher third-order derivatives. Nevertheless, the deterministic-like non-linear terms remain unchanged by this regularisation. 

\paragraph{Scaled LU Navier-Stokes equations with regularised noise term (rSNS$_{\alpha_\sigma}^\epsilon$)}\hfill\\
\warning{It is natural that we consider a Navier-Stokes equation with the same filtered noise as in the primitive equations. Hence, we introduce a regularised version of the LU Navier-Stokes system for consistency,}
\begin{align}
    d_t v_\epsilon + \Big((u_\epsilon^* dt &+ \Upsilon^{1/2} \sigma d\tilde{W}_t) \bcdot \nabla_3\Big) v_\epsilon - \frac{\Upsilon}{2} \nabla_3 \bcdot(a \nabla_3 v_\epsilon) dt \nonumber\\
    &- \Delta_3 (v_\epsilon dt + \Upsilon^{1/2} \sigma^H d\tilde{W}_t) + \frac{1}{\rho_0} \nabla_H (p_\epsilon\:dt + dp_t^{\sigma, \epsilon}) = 0, \\
    \epsilon^2 \Bigg(d_t w_\epsilon &+ (u_\epsilon^* dt \bcdot \nabla_3) w_\epsilon + \alpha_\sigma \Upsilon^{1/2} K*[(\sigma d\tilde{W}_t \bcdot \nabla_3) w_\epsilon] - \frac{\alpha_\sigma^2 \Upsilon}{2} \nabla_3 \bcdot(a^K \nabla_3 w_\epsilon) dt \nonumber\\
    &- \Delta_3 (w_\epsilon dt + \alpha_\sigma \Upsilon^{1/2} \sigma^z d\tilde{W}_t)\Bigg) + \frac{1}{\rho_0} \partial_z (p_\epsilon\:dt + dp_t^{\sigma, \epsilon}) + \frac{\rho_\epsilon}{\rho_0} g = 0, \label{eq:SNS-w-reg}\\
    d_t \theta_\epsilon + \Big((u_\epsilon^* dt + &\Upsilon^{1/2} \sigma d\tilde{W}_t) \bcdot \nabla_3\Big) \theta_\epsilon - \frac{\Upsilon}{2} \nabla_3 \bcdot(a \nabla_3 \theta_\epsilon) dt - \Delta_3 \theta_\epsilon dt = 0,\\
    \nabla_H &\bcdot v_\epsilon + \partial_z w_\epsilon = 0, \qquad \text{ and } \qquad \rho_\epsilon = \rho_0(1+\theta_\epsilon).
\end{align}
Equivalently, this can be rewritten as,
\begin{align}
    d_t u_\epsilon + (u_\epsilon^* \bcdot \nabla_3) u_\epsilon dt &+ \Upsilon^{1/2} \mathcal{I}_K (\sigma d\tilde{W}_t \bcdot \nabla_3) I_{\alpha_\sigma} u_\epsilon - \frac{\Upsilon}{2} \nabla_3 \bcdot(\mathfrak{a}^K \nabla_3 I_{\alpha_\sigma^2} u_\epsilon) dt \nonumber\\
    - \Delta_3 (u_\epsilon dt &+ \Upsilon^{1/2} I_{\alpha_\sigma} \sigma d\tilde{W}_t) + \frac{1}{\rho_0} \nabla_\epsilon (p_\epsilon\:dt + dp_t^{\sigma, \epsilon}) + \frac{\rho_\epsilon}{\epsilon^2 \rho_0} g \bold{e}_z = 0, \\
    d_t \theta_\epsilon + \Big((u_\epsilon^* dt + &\Upsilon^{1/2} \sigma d\tilde{W}_t) \bcdot \nabla_3\Big) \theta_\epsilon - \frac{\Upsilon}{2} \nabla_3 \bcdot(a \nabla_3 \theta_\epsilon) dt - \Delta_3 \theta_\epsilon dt = 0,\\
    \nabla_H &\bcdot v_\epsilon + \partial_z w_\epsilon = 0, \qquad \text{ and } \qquad \rho_\epsilon = \rho_0(1+\theta_\epsilon),
\end{align}
where use the following notations: we denote the following operator by $\mathcal{I}_K = Diag(1, 1, K*)$ and, again, $I_{\alpha} = Diag(1, 1, \alpha)$, for all $\alpha > 0$. In addition, the operator $\mathfrak{a}^K$ is defined such that
\begin{equation}
    \forall (v,w) \in L^2(\mathcal{S},\R^3), \nabla_3 \bcdot \Big(\mathfrak{a}^K \nabla_3 \begin{pmatrix}
        v \\ w
    \end{pmatrix}\Big)
    = \begin{pmatrix}
        \nabla_3 \bcdot (a \nabla_3 v) \\ \nabla_3 \bcdot (a^K \nabla_3 w)
    \end{pmatrix}.
\end{equation}
Once again, we assume that the incompressibility condition \eqref{eq-NS-scaled-div-free-noise} holds. \warning{In Section \ref{sec-abstract-formulation}, we compare  (PE$_{\alpha_\sigma}^\epsilon$) with (rSNS$_{\alpha_\sigma}^\epsilon$), to show a convergence result that holds when $\alpha_\sigma \epsilon \rightarrow 0$ as $\epsilon \rightarrow 0$. Next, in Section \ref{sec-non-reg}, we compare (PE$_{\alpha_\sigma}^\epsilon$) with the non-regularised Navier-Stokes system (SNS$_{\alpha_\sigma}^\epsilon$) and obtain a much weaker result.}

From now, we denote the solutions to (rSNS$_{\alpha_\sigma}^\epsilon$) and (PE$_{\alpha_\sigma}^\epsilon$) as $(u_\epsilon, \theta_\epsilon)$ and $(u,\theta)$ respectively, or equivalently $(v_\epsilon, w_\epsilon, \theta_\epsilon)$ and $(v, w, \theta)$, respectively. In the rest of the paper, we drop the tilde notation for readability, as well as the $0$ index and exponent notation. More explicitly, we write $\sigma dW_t$, $a$ and $u_s$ in place of $\sigma_0 d\tilde{W}_t$, $a^0$ and $u_s^0$.

\subsection{Boundary conditions}

In this subsection, we introduce the initial and boundary conditions we use with rigid-lid boundary conditions. Notice that ``fully periodic'' boundary conditions will be considered in Subsection \ref{subsec:main-result-strong}. First, we decompose the boundary as $\partial S = \Gamma_u \cup \Gamma_b \cup \Gamma_l$ -- respectively the upper, bottom and lateral boundaries -- and equip this problem with the following free-slip rigid-lid type boundary conditions \cite{BS_2021,DGHT_2011, DMM2025}, 
\begin{align}
    \partial_z v  = 0, && w = 0, && \partial_z \theta + \alpha_\theta \theta =0, && \text{on } \Gamma_u, \label{boundary-conditions}\\
    \partial_z v = 0, && w = 0, && \partial_z \theta =0,  && \text{on } \Gamma_b, \nonumber \\
    \partial_{\bold{n}_H} v \times \bold{n}_H = 0, && v \bcdot \bold{n}_H = 0, && \partial_{\bold{n}_H} \theta =0, && \text{on } \Gamma_l. \nonumber
\end{align}
In the following, we consider an It\={o}-Stokes drift $u_s$ which fulfil \eqref{boundary-conditions}. Both problems (SNS$_{\alpha_\sigma}^\epsilon$) and (PE$_{\alpha_\sigma}^\epsilon$) -- and thus (PE) -- are equipped with these boundary conditions.

\section{Abstract formulation of the problem} \label{sec-abstract-formulation}

\subsection{Function spaces}\label{subsec-def-spaces}

In this subsection, we define the function spaces to be used below. Remind that the scaled spatial domain is denoted by $\mathcal{S} = \mathcal{S}_H \times [-1,1] \subset \R^3$. Let $d \geq 1$. We define the following function spaces on the domain $\Omega_S \in \{\mathcal{S}, \mathcal{S}_\epsilon, \mathcal{S}_H\}$. First, denote by $L^0(\Omega_S, \R^d)$ (respectively, $C_c^\infty(\Omega_S, \R^d)$) the set of measurable functions (respectively, smooth functions with compact support) from $\Omega_S$ to $\R^d$. In addition, for any real number $1 \leq q < \infty$, $1 \leq p \leq \infty$ and $s \geq 0$, and any integer $m \geq 0$, we denote by
\begin{gather*}
    L^q(\Omega_S,\R^d) = \bigg\{u \in L^0(\Omega_S, \R^d) \bigg| \int_{\Omega_S} \|u\|^q < +\infty\bigg\},\\
    L^\infty(\Omega_S,\R^d) = \bigg\{u \in L^0(\Omega_S, \R^d) \bigg| \esssup_{\Omega_S} \|u\| < +\infty\bigg\},\\
    W^{m,p}(\Omega_S,\R^d) = \bigg\{u \in L^p(\Omega_S,\R^d) \bigg| \forall \alpha \text{ such that } |\alpha|\leq m, D^\alpha u \in L^p(\Omega_S) \bigg\}.
\end{gather*}
In addition, we write $W^{m,2}(\Omega_S,\R^d) = H^m(\Omega_S,\R^d)$ for any non negative integer $m$, and we define the spaces $H^s(\Omega_S,\R^d)$ by interpolation, for any positive real number $s$. Furthermore, for any Banach space $\mathcal{B}$, and for $I$ an interval of $\R_+$, we denote by $C(I, \mathcal{B})$ (respectively $C_w(I, \mathcal{B}), L^0(I, \mathcal{B})$) the set of continuous (respectively weakly continuous, measurable) functions from $I$ to $\mathcal{B}$, and the function spaces
\begin{gather*}
    L^p(I, \mathcal{B}) = \Big\{ f \in L^0(I, \mathcal{B}) \: \Big| \: \int_I \|f\|_{\mathcal{B}}^p < \infty \Big\}, \\
    W^{s,p}(I, \mathcal{B}) = \Big\{ f \in L^p(I, \mathcal{B}) \: \Big| \: \int_I \int_I \frac{\|f(t) - f(t')\|^p_{\mathcal{B}}}{|t-t'|^{1+s p}} dt dt'  < \infty \Big\},
\end{gather*}
with
\begin{equation*}
    \|\bcdot\|_{L^p(I, \mathcal{B})} = \Big(\int_I \|\bcdot\|_{\mathcal{B}}^p dt\Big)^{1/p}\!\!\!\!\!\!, \quad \text{ and } \quad \|\bcdot\|_{W^{s,p}(I, \mathcal{B})} = \Big(\int_I \|\bcdot\|_{\mathcal{B}}^p dt + \int_I \int_I \frac{\|f(t) - f(t')\|^p_{\mathcal{B}}}{|t-t'|^{1+s p}} dt dt'\Big)^{1/p}\!\!\!\!\!\!,
\end{equation*}
their respective associated norms -- see \cite{FG_1995}.
For any interval $I \subset \R_+$, we also define $W^{s,p}_{loc}(I,\mathcal{B}) := \Big\{ f : I \rightarrow \mathcal{B} \: \Big| \: \forall T >0, f \in W^{s,p}([0,T], \mathcal{B}) \Big\}$.
In addition, for any Hilbert spaces $\mathcal{H}_1$ and $\mathcal{H}_2$, we define $\mathcal{L}_2( \mathcal{H}_1,\mathcal{H}_2)$ the space of Hilbert-Schmidt operators from $\mathcal{H}_1$ to $\mathcal{H}_2$, and $\|\bcdot\|_{\mathcal{L}_2( \mathcal{H}_1,\mathcal{H}_2)}$ its associated norm.

Now, let $\phi_k$ the eigenfunctions of the operator $\sigma_0$ as defined in Section \ref{sec:derivation-NS-PE}. Then denote by $\phi_k = (\phi_k^H \quad \phi_k^z)^\tr$, where $\phi_k^H$ are 2D vectors and $\phi_k^z$ are scalars. Then, we assume that the noise fulfils the non-penetration condition on the boundary, that is
\begin{equation}
    \phi_k \bcdot \bold{n} = 0 \text{ on $\partial \mathcal{S}$}. \label{eq-noiseBC}
\end{equation}
Furthermore, the eigenfunctions $\phi_k$ are assumed to be regular enough in the following sense,
\begin{gather}
    \sup_{t\in [0,T]} \sum_{k=0}^\infty \|\phi_k\|_{H^4(\mathcal{S}, \R^3)}^2 < \infty, \text{ which implies } \sup_{t\in[0,T]} \sup_k \|\phi_k\|_{L^\infty(\mathcal{S},\R^3)} < \infty, \label{smoothness-noise}
\end{gather}
and 
\begin{gather*}
    u_s \in L^\infty\Big([0,T], H^4(\mathcal{S}, \R^3)\Big), \: a \in H^1\Big([0,T],H^3(\mathcal{S},\R^{3 \times 3})\Big), \: d_t u_s \in L^\infty\Big([0,T],H^3(\mathcal{S}, \R^3)\Big).
\end{gather*}
In addition, notice that the aforementioned conditions imply that
\begin{equation*}
    a \nabla u_s \in L^\infty\Big([0,T],H^2(\mathcal{S},\R^3)\Big).
\end{equation*}
These regularity assumptions are not limiting in practice since most models consider spatially smooth noises for ocean models \cite{TML_2023}, as they are the physically observed ones.

Moreover, we introduce the spaces associated to the boundary conditions \eqref{boundary-conditions}. Define the following inner products,
\begin{gather*}
    (v,v^\sharp)_{H_1} = (v,v^\sharp)_{L^2(\mathcal{S},\R^2)}, \quad (v,v^\sharp)_{V_1} = (\nabla v, \nabla v^\sharp)_{L^2(\mathcal{S},\R^{2 \times 2})}\\
    (\theta,\theta^\sharp)_{H_2} = (\theta,\theta^\sharp)_{L^2(\mathcal{S},\R)}, \quad (\theta,\theta^\sharp)_{V_2} = (\nabla \theta,\nabla \theta^\sharp)_{L^2(\mathcal{S},\R^{2 \times 2})} + \alpha_\theta (\theta,\theta^\sharp)_{L^2(\Gamma_u, \R)},
\end{gather*}
and let
\begin{align}
    (F,F^\sharp)_H = (v,v^\sharp)_{H_1} + (\theta,\theta^\sharp)_{H_2}, \quad (F,F^\sharp)_V = (v,v^\sharp)_{V_1} + (\theta,\theta^\sharp)_{V_2},
\end{align}
for all $F,F^\sharp \in L^2(\mathcal{S},\R^3)$, such that $F = (v,\theta)^\tr$ and $F^\sharp = (v^\sharp, \theta^\sharp)^\tr$. Also, we denote by $\|.\|_H, \|.\|_{H_i}$ and $\|.\|_V,\|.\|_{V_i}$ the associated norms. With a slight abuse of notation, we may write $\|.\|_H, \|.\|_V$ in place of $\|.\|_{H_i}, \|.\|_{V_i}$, respectively. Then, denote by $\mathcal{V}_1$ the space of functions of $C^\infty(\mathcal{S},\R^2)$, such that for all $v \in \mathcal{V}_1$, $\nabla_H \bcdot \int_{-1}^1 v dz = 0$ on $\mathcal{S}_H$ and $v \bcdot \bold{n} = 0$ on $\Gamma_l$. In addition, define $\mathcal{V}_2$ the space of functions of $C^\infty(\mathcal{S},\R)$ that average to zero over $\mathcal{S}$. Denote by $H_i$ the closure of $\mathcal{V}_i$ for the norm $\|.\|_{H_i}$, and $V_i$ its closure by $\|.\|_{V_i}$. Eventually, define $H=H_1 \times H_2$ and $V=V_1 \times V_2 $, which are also the closures of $\mathcal{V}_1 \times \mathcal{V}_2$ by $\|.\|_{H}$ and $\|.\|_{V}$, respectively. Often, by abuse of notation, we write $(\bcdot,\bcdot)_H$ instead of $(\bcdot,\bcdot)_{V' \times V}$. More generally, if $K$ is a subspace of $H$ and $K'$ its dual space, we write $(\bcdot,\bcdot)_H$ instead of $(\bcdot,\bcdot)_{K' \times K}$. In the following, $H_1$ and $V_1$ are interpreted as horizontal velocity spaces ($\R^2$-valued processes). Using this formalism, the vertical velocity $w$ is written as a functional of the horizontal velocity $v$ through the continuity equation, namely $w(v) = \int_z^1 \nabla_H \cdot v \: dz'$. In addition, we interpret $H_2$ and $V_2$ as tracer spaces. Moreover, we define $\mathcal{D}(A) = V \cap H^2(\mathcal{S},\R^3)$, where $A$ is defined as the operator $\mathbf{P}(-\Delta_3)$. Here, $\mathbf{P}$ is a projector defined in the next subsection. As such, $A : \mathcal{D}(A) \rightarrow H$ is an unbounded operator.

\subsection{Modified Leray projectors}

In order to disregard the pressure terms in the scaled momentum Navier-Stokes and primitive equations (SNS), we introduce modified Leray projectors. We define first the barotropic and baroclinic projectors 
\begin{gather}
    \mathcal{A}_2 :  L^2({\mathcal S},   \R^2)  \to  L^2({\mathcal S}_H,   \R^2), \quad \text{ and } \quad \mathcal{A},\mathcal{R} : L^2({\mathcal S}, \R^2)  \to  L^2({\mathcal S}, \R^2),
\end{gather}
of the velocity component as follows -- see \cite{BS_2021}. For $v \in H$, let
\begin{equation}
    \mathcal{A}_2 [v](x,y) = \frac{1}{2} \int_{-1}^1 v(x,y,z') dz', \quad \mathcal{A} [v](x,y,z) = \mathcal{A}_2 [v](x,y), \quad \mathcal{R} [v] = v - \mathcal{A} [v].
\end{equation}
Remark that $\mathcal{A}$ and $\mathcal{R}$ are orthogonal projectors with respect to the inner product $(\bcdot, \bcdot )_{H}$. To simplify notations, we will use $\bar{v}$ in place of $\mathcal{A}_2 [v]$ or $\mathcal{A} [v]$, and $\tilde{v}$ in place of $\mathcal{R} [v]$. We are now in position to define the projector $\mathbf{P}$, which acts on bi-dimensional vector fields on ${\mathcal S}$ by combining the classical 2D  Leray projection of the barotropic component with the (divergent) baroclinic component of the horizontal velocity,
\begin{equation}
    \mathbf{P}= \mathbf{P}^{2D} \mathcal{A} + \mathcal{R},
\end{equation}
where $\mathbf{P}^{2D}$ denotes the standard 2D Leray projector associated to the boundary condition $\bar{v} \bcdot \bold{n} = 0$ on $\partial \mathcal{S}_H$.

Furthermore, we introduce the following inner product, for all $u,u^\sharp \in L^2(\mathcal{S}, \R^3)$,
\begin{equation}
    (u, u^\sharp)_{L_{\epsilon}^2} = (v, v^\sharp)_{H_1} + \epsilon^2 (w, w^\sharp)_{L^2(\mathcal{S},\R)},
\end{equation}
and we denote by $\|\bcdot\|_{L_\epsilon^2}$ its associated norm. Similarly, for $v,v^\sharp \in V_1$, we define 
\begin{gather}
    (u, u^\sharp)_{V_\epsilon} = (v, v^\sharp)_{V_1} + \epsilon^2 (w, w^\sharp)_{H^1(\mathcal{S},\R)}, 
\end{gather}
and $\|\bcdot\|_{V_\epsilon}$ the associated norm. Moreover, we define the space
\begin{equation}
    \mathcal{F} = \Big\{(v, \int_z^1 \nabla_H \bcdot v dz')^\tr \: \Big| \: v \in H_1 \Big\}.
\end{equation}
Thus, we can introduce $\mathbf{P}_{\epsilon} : L^2(\mathcal{S}, \R^3) \rightarrow \mathcal{F}$, the orthogonal projection onto $\mathcal{F}$ with respect to the inner product $(\bcdot, \bcdot)_{L_\epsilon^2}$. In particular, all the terms of the form $\nabla_\epsilon f$ are cancelled by $\mathbf{P}_{\epsilon}$.

In this context, $\mathbf{P}$ and $\mathbf{P}_\epsilon$ can be interpreted as ``modified'' Leray projectors for (PE$_{\alpha_\sigma}^\epsilon$) and (SNS$_{\alpha_\sigma}^\epsilon$) respectively, since they cancel pressure gradient terms (i.e. the horizontal pressure gradient and the full rescaled pressure gradient, respectively). The main difference however is that $\mathbf{P}$ keeps the baroclinic pressure terms unchanged in (PE$_{\alpha_\sigma}^\epsilon$) -- see \cite{BS_2021}. The use of such projector is justified by a crucial remark mentioned in \cite{CT_2007}: in the (deterministic) primitive equations, the pressure naturally decomposes into barotropic and baroclinic components. The baroclinic mode is explicitly determined via the hydrostatic balance, while the barotropic mode influences only the barotropic dynamics through a vertically averaged pressure. Consequently, the barotropic dynamics resemble those of a two-dimensional Navier-Stokes system, whereas the baroclinic component follows a structure akin to a forced Burgers equation, with the two modes coupled through nonlinear interactions. In this context, the projector $\mathbf{P}$ acts as a horizontal Leray projector on the barotropic component, ensuring incompressibility in the vertically averaged, large-scale flow, while it reduces to the identity on the baroclinic dynamics.

\subsection{Main results}

\subsubsection{Convergence estimate with rigid-lid boundary conditions}

In the following, we assume that the noise is bidimensional in the following sense: we suppose that $\sigma^H dW_t$ is divergence-free and independent of the $z$-coordinate, and that
\begin{equation}
    \forall (x,y,z) \in \mathcal{S}, \quad \sigma dW_t (x,y,z) = (\sigma^H dW_t (x,y) \quad 0)^\tr, \label{eq:hyp-2D-noise}
\end{equation}
This is in the line of \cite{DMM2025}, and very connected to the $z$-independence assumption for global well-posedness given in \cite{AHHS_2022}. Furthermore, without loss of generality, we set the noise scaling factor to be $\Upsilon = 1$. Applying the projector $\mathbf{P}_\epsilon$ to the momentum equation of (rSNS$_{\alpha_\sigma}^\epsilon$) yields the following problem, which is equipped with the boundary conditions \eqref{boundary-conditions}, and that we denote by $(\mathcal{P}_\epsilon^{rSNS})$,
\begin{align}
    d_t u_\epsilon + \mathbf{P}_\epsilon (u_\epsilon^* dt \bcdot \nabla_3) u_\epsilon &+ \mathbf{P}_\epsilon \mathcal{I}_K (\sigma dW_t \bcdot \nabla_3) I_{\alpha_\sigma} u_\epsilon - \mathbf{P}_\epsilon \frac{1}{2} \nabla_3 \bcdot(\mathfrak{a}^K \nabla_3 I_{\alpha_\sigma^2} u_\epsilon) dt \nonumber\\
    &- \mathbf{P}_\epsilon \Delta_3 (u_\epsilon dt + I_{\alpha_\sigma} \sigma dW_t) +  \mathbf{P}_\epsilon \frac{\rho_\epsilon}{\epsilon^2 \rho_0} g \bold{e}_z dt = 0, \\
    d_t \theta_\epsilon + \Big((u_\epsilon^* dt + & \sigma dW_t) \bcdot \nabla_3\Big) \theta_\epsilon - \frac{1}{2} \nabla_3 \bcdot(a \nabla_3 \theta_\epsilon) dt - \Delta_3 \theta_\epsilon dt = 0,
\end{align}
with $\rho_\epsilon = \rho_0(1+\theta_\epsilon)$. Remind that we have dropped the tilde notation, as well as the $0$ index and exponent notation. Thus, in this equation, $\sigma dW_t$, $a$ and $u_s$ refer to what was previously denoted $\sigma_0 d\tilde{W}_t$, $a^0$ and $u_s^0$.

Applying similarly $\mathbf{P}$ to the horizontal momentum equation of (PE$_{\alpha_\sigma}^\epsilon$) yields the problem $(\mathcal{P}_\epsilon^{PE})$, which is equipped with the same boundary conditions \eqref{boundary-conditions},
\begin{align}
    d_t v + \mathbf{P}\Big((u^* dt + \sigma dW_t) &\bcdot \nabla_3\Big) v -  \mathbf{P} \Delta_3 (v dt + \sigma^H dW_t) \nonumber \\
    -  \mathbf{P} \nabla_H &\int_z^1 \partial_z (p\:dt + dp_t^{\sigma}) dz' - \frac{1}{2}  \mathbf{P} \nabla_3 \bcdot(a \nabla_3 v) dt= 0, \label{eq:abstract-PE-v}\\
    \partial_z p = -\frac{\rho}{\rho_0} &g + \alpha_\sigma^2 \epsilon^2 \frac{1}{2} \nabla_3 \bcdot (a^K \nabla_3 w),\\
    \partial_z dp_t^{\sigma} = - \alpha_\sigma \epsilon^2 K*[(\sigma &dW_t \bcdot \nabla) w] +  \alpha_\sigma \epsilon^2 \Delta (\sigma^z dW_t), \\
    d_t \theta + \Big((u^* dt + \sigma &dW_t) \bcdot \nabla_3\Big) \theta - \frac{1}{2} \nabla_3 \bcdot(a \nabla_3 \theta) dt - \Delta \theta dt = 0,
\end{align}
with $ \rho = \rho_0(1 + \theta)$. 

\bigskip

\noindent
Now, we introduce the following definition of global martingale Leray-Hopf weak solutions,
\begin{definition}[Global martingale Leray-Hopf weak solution] \label{def-GlobMartWeakSol}
    Let $(v_0,\theta_0) \in V$ and $\mathcal{B}$ be a stochastic basis. If $(v,\theta)$ is a pair of velocity and tracer stochastic fields defined on $\mathcal{B}$ such that $(v,\theta)_{t=0} = (v_0,\theta_0)$, $(\mathcal{B},v,\theta)$ is said to be a \emph{global martingale Leray-Hopf weak solution} of $(\mathcal{P}_\epsilon^{rSNS})$ with initial data $(v_0,\theta_0)$ if the propositions hereafter hold. In the following, we denote by $w = \int_z^1 \nabla_H \bcdot v$, $u = (v,w)^\tr$, $\rho = \rho_0(1+\theta)$.
    \begin{enumerate}
        \item for all $T > 0$, $(v,\theta) \in C_w([0,T], H) \cap L^2([0,T], V)$ almost surely in $\mathcal{B}$,
        \item for all $T>0$, for all integer $p \geq 1$, for all stopping times $0 < \eta < \zeta < T$, $(v,w,\theta)$ fulfils the following energy estimates almost surely in $\mathcal{B}$,
        \begin{multline}
            \frac{1}{p}\|u\|_{L_\epsilon^2}^{2p}(\zeta) + \int_\eta^{\zeta} \|u\|_{L_\epsilon^2}^{2p-2} \| \nabla_3 u\|_{L_\epsilon^2}^2 dt + \frac{1}{2} \int_\eta^{\zeta} \|u\|_{L_\epsilon^2}^{2p-2} (\mathfrak{a}^K \nabla I_{\alpha_\sigma} u,\nabla I_{\alpha_\sigma} u)_{L_\epsilon^2} dt \\
            \leq \frac{1}{p} \|u\|_{L_\epsilon^2}^{2p} (\eta) + \frac{1}{2} \int_\eta^{\zeta} \sum_k \|u\|_{L_\epsilon^2}^{2p-2} \|\mathbf{P}_\epsilon \mathcal{I}_K (\phi_k \bcdot \nabla I_{\alpha_\sigma} u) + \Delta I_{\alpha_\sigma} \phi_k\|^2_{L_\epsilon^2} dt\\
            + \int_\eta^{\zeta} \|u\|_{L_\epsilon^2}^{2p-2} \big(\Delta I_{\alpha_\sigma} \sigma dW_t, u\big)_{L_\epsilon^2} - \int_\eta^{\zeta} \|u\|_{L_\epsilon^2}^{2p-2} (\frac{\rho_\epsilon}{\rho_0} g, w)_{L^2} dt, \label{eq-defWeakSol-energ}
        \end{multline}
        \item for all $T > 0$, for all test functions $(u^\sharp, \theta^\sharp) \in C_c^\infty(\mathcal{S},\R^4)$ such that $\nabla_3 \bcdot u^\sharp = 0$, let $w^\sharp = u^\sharp \bcdot \bold{e}_z$. Then, the following relations hold almost surely in $\mathcal{B}$, 
        \begin{multline}
            \int_\mathcal{S} u^\sharp \bcdot (u(T) - u(0)) + \Bigg[\int_0^T - u \bcdot (u^* dt \bcdot \nabla_3) u^\sharp - I_{\alpha_\sigma} u \bcdot \mathcal{I}_K(\sigma dW_t \bcdot \nabla_3) u^\sharp\\
            -  \frac{1}{2} I_{\alpha_\sigma^2} u \bcdot \nabla_3 \bcdot(\mathfrak{a}^K \nabla_3 u^\sharp) dt - (u dt + I_{\alpha_\sigma} \sigma dW_t) \Delta_3 u^\sharp + w^\sharp \frac{g \rho_\epsilon}{\epsilon^2 \rho_0} dt\Bigg] dxdydz = 0
        \end{multline}
        and 
        \begin{multline}
            \int_\mathcal{S} \theta^\sharp \bcdot (\theta(T) - \theta(0)) + \Bigg[ \int_0^T - \theta \Big((u^* dt + \sigma dW_t) \bcdot \nabla_3\Big) \theta^\sharp \\
            - \frac{1}{2} \theta \nabla_3 \bcdot(a \nabla_3 \theta^\sharp) dt - \theta \Delta_3 \theta^\sharp dt \Bigg] dxdydz = 0.
        \end{multline}
    \end{enumerate}
\end{definition}
Before stating our main results, we introduce notations to be used in their statements and proofs. First, we gather some useful facts. For each $\epsilon>0$, adapting the proof of \cite{DHM_2023}, we infer that $(\mathcal{P}_\epsilon^{rSNS})$ admits at least one global martingale Leray-Hopf weak solution $(v_\epsilon, \theta_\epsilon)$, defined on an $\epsilon$-dependent stochastic basis which is not known \emph{a priori}. 
\begin{proposition} \label{thm:SNS-reg-weak}
    The problem $(\mathcal{P}_\epsilon^{rSNS})$, equipped with the initial condition $(u_0, \theta_0) \in H$, admits a least one global-in-time martingale solution.
\end{proposition}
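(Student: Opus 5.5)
We follow the strategy of \cite{DHM_2023} for the 3D LU Navier-Stokes equations: Galerkin approximation, uniform energy estimates, tightness, and passage to the limit via Skorokhod--Jakubowski. The only new ingredients are the scaled geometry (the inner product $(\bcdot,\bcdot)_{L_\epsilon^2}$ and the projector $\mathbf{P}_\epsilon$), the filter $K$ in the vertical noise, and the buoyancy coupling with $\theta_\epsilon$. Throughout, $\epsilon>0$ is fixed, so every $\epsilon$-dependent constant is harmless.

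\textbf{Galerkin scheme.} We take a basis of $\mathcal{F}\cap V$ made of eigenfunctions of the Stokes-type operator $\mathbf{P}_\epsilon(-\Delta_3)$, which is self-adjoint for $(\bcdot,\bcdot)_{L_\epsilon^2}$ with the boundary conditions \eqref{boundary-conditions}. For the tracer we take eigenfunctions of the Neumann/Robin Laplacian. Projecting $(\mathcal{P}_\epsilon^{rSNS})$ onto the first $n$ modes gives a finite-dimensional SDE with locally Lipschitz coefficients, hence a unique local solution $(u^n,\theta^n)$.

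\textbf{Uniform estimates.} We apply Itô's formula to $\|u^n\|_{L_\epsilon^2}^{2p}$ and to $\|\theta^n\|_{L^2}^{2p}$. The key structural point is that the Itô correction $\frac12\sum_k\|\mathbf{P}_\epsilon\mathcal{I}_K(\phi_k\bcdot\nabla I_{\alpha_\sigma}u)\|^2_{L_\epsilon^2}$ is dominated by the dissipation $\frac12(\mathfrak{a}^K\nabla I_{\alpha_\sigma}u,\nabla I_{\alpha_\sigma}u)_{L_\epsilon^2}$. This holds because $\mathbf{P}_\epsilon$ is an orthogonal projection, and $a^K$ was defined precisely as $\sum_k\phi_k\mathcal{C}_K^*\mathcal{C}_K\phi_k^\tr$, which makes the filtered Itô term match it exactly. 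The transport term is skew-symmetric because $\nabla\bcdot\sigma=0$ and \eqref{eq-noiseBC} holds.

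The remaining terms are handled as follows.
\begin{itemize}
\item The drift $u_s$ terms and the additive noise $\Delta I_{\alpha_\sigma}\phi_k$ are controlled using \eqref{smoothness-noise}.
\item The buoyancy term $\frac{g}{\epsilon^2\rho_0}(\rho_\epsilon,w)$ is bounded by $C_\epsilon\|\theta\|\,\|u\|_{L_\epsilon^2}$.
\item The tracer equation is decoupled in the estimates, so it is estimated first.
\end{itemize}
The Burkholder--Davis--Gundy inequality and Gronwall's lemma then give
\[
\E\sup_{t\le T}\big(\|u^n\|_{L_\epsilon^2}^{2p}+\|\theta^n\|^{2p}\big)+\E\int_0^T\|\nabla u^n\|^2+\|\nabla\theta^n\|^2\,dt\le C(\epsilon,p,T),
\]
which in turn yields global existence of the Galerkin solutions.

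\textbf{Tightness and limit.} Writing the equation in integral form, the drift is bounded in $W^{1,4/3}(0,T;D(A)')$ and the stochastic part in $W^{\alpha,p}(0,T;D(A)')$ for some $\alpha<1/2$. Combining this with the Flandoli--Gatarek compactness lemma \cite{FG_1995}, the laws are tight in $L^2(0,T;H)\cap C([0,T];D(A)')$, and in $C_w([0,T],H)$ by the $L^\infty(H)$ bound. Skorokhod--Jakubowski then provides almost sure convergence on a new stochastic basis. We pass to the limit in the weak formulation, with the nonlinear term handled by strong $L^2$ convergence, and identify the martingale part through the standard quadratic variation argument. Lower semicontinuity gives \eqref{eq-defWeakSol-energ}.

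The main obstacle is obtaining the energy inequality between arbitrary stopping times $\eta<\zeta$, rather than only from time $0$. We would handle this as in \cite{DHM_2023}: write the energy inequality for the Galerkin system in integrated form with the stochastic integral kept explicit, and pass to the limit using almost sure convergence of those stochastic integrals together with lower semicontinuity of the left-hand side along a full-measure set of times, extending to all stopping times by weak continuity.
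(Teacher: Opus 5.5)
Your outline (Galerkin scheme in $\mathcal{F}$ with the $L_\epsilon^2$ structure, uniform moment bounds, Flandoli--Gatarek tightness, Skorokhod--Jakubowski, identification of the martingale) is the adaptation of \cite{DHM_2023} that the paper invokes without details. Your structural observation is also right: by the definition of $a^K$, $\sum_k\|\mathcal{I}_K(\phi_k\bcdot\nabla_3 I_{\alpha_\sigma}u)\|_{L_\epsilon^2}^2=(\mathfrak{a}^K\nabla_3 I_{\alpha_\sigma}u,\nabla_3 I_{\alpha_\sigma}u)_{L_\epsilon^2}$.

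\textbf{The step that fails is the one you single out as the main obstacle.} When you pass to the limit in the Galerkin energy inequality, weak continuity gives $\|\tilde u(\zeta)\|_{L_\epsilon^2}\le\liminf_n\|\tilde u_n(\zeta)\|_{L_\epsilon^2}$. That is the correct direction for the left-hand side. But $\frac1p\|u\|_{L_\epsilon^2}^{2p}(\eta)$ sits on the right-hand side, so you need $\limsup_n\|\tilde u_n(\eta)\|_{L_\epsilon^2}\le\|\tilde u(\eta)\|_{L_\epsilon^2}$, i.e.\ strong convergence at time $\eta$.
\begin{itemize}
\item Convergence in $L^2(0,T;H)$ gives this only at $\eta=0$ and, by Fubini, almost surely for Lebesgue-a.e.\ deterministic $\eta$. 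A stopping time may take its values in the $\omega$-dependent exceptional null set.
\item Approximating $\eta$ by good times does not rescue the argument. You would need $\|\tilde u(\eta_m)\|_{L_\epsilon^2}\to\|\tilde u(\eta)\|_{L_\epsilon^2}$, i.e.\ right-continuity of the energy at $\eta$. Weak continuity only yields $\|\tilde u(\eta)\|_{L_\epsilon^2}\le\liminf_m\|\tilde u(\eta_m)\|_{L_\epsilon^2}$, which is the wrong inequality.
\end{itemize}
An energy inequality started from every (stopping) time is the stochastic analogue of the strong energy inequality at all times for 3D Leray--Hopf solutions. That is not available even deterministically, where it holds only for almost every initial time.

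So your construction gives the inequality for $\eta=0$ and a.e.\ deterministic $\eta$ (an almost-sure supermartingale property in the spirit of Flandoli--Romito). It does not give the version required in Definition~\ref{def-GlobMartWeakSol}. The paper does not address this point either. Closing it needs either a genuinely new argument or a weaker stopping-time requirement. In the latter case, the stochastic Gr\"onwall step of Section~\ref{sec-thm-cvg-weak}, which uses the inequality on $[\eta,\zeta]$, has to be re-checked.

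\textbf{A secondary inaccuracy concerns skew-symmetry.} The transport noise is skew-symmetric only in its horizontal part. With the bidimensional noise, $(\mathcal{I}_K(\phi_k\bcdot\nabla_3 I_{\alpha_\sigma}u),u)_{L_\epsilon^2}=\alpha_\sigma\epsilon^2(K*(\phi_k\bcdot\nabla_3 w),w)_{L^2}$. For a symmetric filter this equals $\frac{\alpha_\sigma\epsilon^2}{2}([\mathcal{C}_K,\phi_k\bcdot\nabla_3]w,w)_{L^2}$, which vanishes only if $\mathcal{C}_K$ commutes with $\phi_k\bcdot\nabla_3$.
\begin{itemize}
\item It is harmless for the a priori bounds, since it is at most $C\alpha_\sigma\epsilon^2\|\phi_k\|_{L^\infty}\|\nabla_3 w\|_{L^2}\|w\|_{L^2}$.
\item It does add a stochastic integral to the energy balance.
\item For $p\ge2$, It\^o's formula for $\|u\|_{L_\epsilon^2}^{2p}$ also produces a nonnegative term proportional to $(p-1)\|u\|_{L_\epsilon^2}^{2p-4}\sum_k(u,G_k)_{L_\epsilon^2}^2$, where $G_k$ is the coefficient of $d\beta^k$.
\end{itemize}
Both terms must be carried to the limit, so what lower semicontinuity gives you is not literally \eqref{eq-defWeakSol-energ}.

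Finally, keep the It\^o correction combined with the $\mathfrak{a}^K$-dissipation, as $\frac12\sum_k\|(I-\mathbf{P}_\epsilon)\mathcal{I}_K(\phi_k\bcdot\nabla_3 I_{\alpha_\sigma}u)\|_{L_\epsilon^2}^2$ on the left, when passing to the limit. Left on the right-hand side, that quadratic term is not upper semicontinuous under weak convergence.
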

The proof is not given in details, since the reasoning is a straightforward adaptation of the work performed for the ``direct'' LU Navier-Stokes equations in \cite{DHM_2023}. In addition, since it was shown in \cite{DMM2025} that $(\mathcal{P}_\epsilon^{PE})$ admits a unique global pathwise solution, we denote by $(v, \theta)$ such a solution on the aforementioned stochastic basis. As such, $(v, \theta)$ and $(v_\epsilon, \theta_\epsilon)$ are both defined on the same $\epsilon$-dependent stochastic basis. Yet, for readability, the $\epsilon$-dependence of $(v, \theta)$ does not appear explicitly in the notation. Furthermore, from \cite{DMM2025} again, almost surely, for all $T>0$,
\begin{equation*}
    \sup_{[0,T]} \frac{1}{2}\|(v,\theta)\|_{V}^2 + \int_0^{T} \|(v,\theta)\|_{\mathcal{D}(A)}^2 dt < \infty.
\end{equation*}
Reasoning similarly, we may prove that, almost surely,
\begin{equation*}
     \sup_{[0,T]} \frac{1}{4}\|(v,\theta)\|_{V}^4 + \int_0^{T} \|(v,\theta)\|_{V}^2 \|(v,\theta)\|_{\mathcal{D}(A)}^2 dt < \infty.
\end{equation*}
Additionally, denote by $u_\epsilon = (v_\epsilon, w_\epsilon)^\tr$ and $u = (v, w)^\tr$ the tridimensional velocity vectors, where we use the notations $w_\epsilon = w(v_\epsilon)$ and $w = w(v)$. Moreover, define $\Theta_\epsilon := \theta_\epsilon - \theta$ and $U_\epsilon = (V_\epsilon, W_\epsilon)^\tr$, where $V_\epsilon := v_\epsilon - v$ and $W_\epsilon := w_\epsilon - w$. Then, for two stopping times $\eta,\zeta$ such that $\eta<\zeta$, we define
    \begin{gather}
        \mathcal{E}_\epsilon^0(\eta,\zeta) : = \sup_{[\eta,\zeta]} \frac{1}{2}\|U_\epsilon\|_{L_\epsilon^2}^2 + \int_\eta^\zeta \|\nabla_3 U_\epsilon\|_{L_\epsilon^2}^2 dt + \sup_{[\eta,\zeta]} \frac{1}{2}\|\Theta_\epsilon\|_{L^2}^2 + \int_\eta^\zeta \|\nabla_3 \Theta_\epsilon\|_{L^2}^2 dt,\\
        \mathcal{E}^{1,4}(\eta,\zeta) : = \sup_{[\eta,\zeta]} \frac{1}{2}\|(v,\theta)\|_{V}^4 + \int_\eta^\zeta \|(v,\theta)\|_{V}^2 \|(v,\theta)\|_{\mathcal{D}(A)}^2 dt.
    \end{gather}
Then, the following theorem holds.
\begin{theorem}\label{theorem-cvg-weak-VREG}
    Assume that $K \in H^3$, and that the noise term $\sigma dW_t$ in the primitive equations is bidimensional and fulfils the regularity conditions \eqref{smoothness-noise}. Let $\epsilon >0$, and equip $(\mathcal{P}_\epsilon^{rSNS})$ and $(\mathcal{P}_\epsilon^{PE})$ with the same initial condition $(v_0, \theta_0)^\tr \in V$, so that $U_\epsilon\vert_{t=0} = \Theta_\epsilon\vert_{t=0} = 0$. Then, for all $T >0$, and $\delta > 0$, there exists a constant $C_\delta >0$ such that,
    \begin{equation}
        \E\Big[\mathcal{E}_\epsilon^0(0,T \wedge \tau_\delta) \Big] \leq C_\delta \epsilon^2  \E\Bigg[(1+\alpha_\sigma^2) + \int_0^{T \wedge \tau_\delta} \| v \|_{H^1}^4 + (1 + \|v\|_{H^1}^{2}) \|v\|_{H^2}^{2} dt \Bigg] \label{eq-thm-cvg-weak-VREG}
    \end{equation}
    where $\tau_{\delta} := \inf \Big\{t>0 \Big|\mathcal{E}^{1,4}(0,t) \geq \delta \Big\}$, so that $\tau_\delta \rightarrow \infty$ in probability as $\delta \rightarrow \infty$. In particular, $\mathcal{E}_\epsilon^0(0,T) \rightarrow 0$ in probability whenever $\alpha_\sigma = o(\epsilon^{-1})$.
\end{theorem}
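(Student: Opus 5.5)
The plan is to follow the deterministic energy method of \cite{LT2019} for the difference $(U_\epsilon,\Theta_\epsilon)$, carried out in the scaled inner product $(\bcdot,\bcdot)_{L_\epsilon^2}$. The key structural step is to rewrite $(\mathcal{P}_\epsilon^{PE})$ as a perturbation of $(\mathcal{P}_\epsilon^{rSNS})$. Write $u=(v,w)$ with $w=w(v)$. Then $u$ solves the full scaled momentum system with the $\nabla_\epsilon$ pressure, up to a residual vertical equation: the terms $\epsilon^2\big(d_tw+(u^*\bcdot\nabla_3)w\,dt-\Delta_3 w\,dt\big)$ and the $\epsilon^2$-scaled vertical transport/diffusion terms that the weak hydrostatic balance did not retain.

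Because the noise is bidimensional, $\sigma^z dW_t=0$ and $\sigma^H$ does not depend on $z$. Hence the $\alpha_\sigma\epsilon^2 K*[(\sigma dW_t\bcdot\nabla)w]$ and $\alpha_\sigma^2\epsilon^2\nabla_3\bcdot(a^K\nabla_3 w)$ contributions appear identically in both systems. They therefore cancel in the difference, except through the dependence of $w_\epsilon$ versus $w$. The non-cancelling residual is $\epsilon^2 R\,\mathbf{e}_z$ with $R=d_tw+(u^*\bcdot\nabla_3)w\,dt-\Delta_3 w\,dt$. Since $\mathbf{P}_\epsilon$ kills $\nabla_\epsilon$ terms, projecting onto $\mathcal{F}$ lets us subtract the two equations. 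The difference equation then carries forcing $-\mathbf{P}_\epsilon(0,\epsilon^2 R)$, and the Boussinesq term reduces to $(\Theta_\epsilon g/\rho_0,W_\epsilon)$ after pairing with the $1/\epsilon^2$ weight.

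Next I will apply It\^o's formula to $\tfrac12\|U_\epsilon\|_{L_\epsilon^2}^2+\tfrac12\|\Theta_\epsilon\|_{L^2}^2$. For the weak solution $u_\epsilon$ this must be done through the energy inequality \eqref{eq-defWeakSol-energ} combined with the It\^o product rule for $(u_\epsilon,u)_{L_\epsilon^2}$, using the regularity of the strong solution $u$ as a test function. The transport-noise terms are the crux of the stochastic part. Their It\^o corrections $\tfrac12\sum_k\|\mathbf{P}_\epsilon\mathcal{I}_K(\phi_k\bcdot\nabla I_{\alpha_\sigma}U_\epsilon)\|^2_{L^2_\epsilon}$ are dominated by $\tfrac12(\mathfrak{a}^K\nabla I_{\alpha_\sigma}U_\epsilon,\nabla I_{\alpha_\sigma}U_\epsilon)_{L^2_\epsilon}$, using that $\mathbf{P}_\epsilon$ and $K*$ are contractions and that $\phi_k$ is divergence free and tangent. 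What remains are lower order commutator terms involving $\nabla a$ and $\phi_k$, controlled by \eqref{smoothness-noise}. The additive $\alpha_\sigma\Delta\sigma^z$ part vanishes. The $\alpha_\sigma$-dependent vertical pieces enter only with weight $\epsilon^2$, giving the $\epsilon^2\alpha_\sigma^2$ term in \eqref{eq-thm-cvg-weak-VREG}.

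The nonlinear terms I split as
\[
(U_\epsilon\bcdot\nabla_3)u+(u_\epsilon^*\bcdot\nabla_3)U_\epsilon,
\]
and the second one vanishes after pairing since $\nabla_3\bcdot u_\epsilon^*=0$ and the boundary conditions hold. The first one is bounded as in \cite{LT2019}, via anisotropic Ladyzhenskaya-type inequalities and $W_\epsilon=-\int\nabla_H\bcdot V_\epsilon$. This gives $\tfrac12\|\nabla_3U_\epsilon\|^2+C(\|v\|_{H^1}^2\|v\|_{H^2}^2+\dots)\|U_\epsilon\|_{L^2_\epsilon}^2$. The residual pairing $\epsilon^2(R,W_\epsilon)$ is bounded by Young's inequality, with $\epsilon\|W_\epsilon\|$ and $\epsilon\|\nabla W_\epsilon\|$ absorbed. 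This leaves $\epsilon^2(\|v\|_{H^1}^4+(1+\|v\|_{H^1}^2)\|v\|_{H^2}^2)$. The time derivative $d_tw$ is handled by integrating by parts in time, $\epsilon^2\int(d_tw,W_\epsilon)$, which produces boundary values and cross It\^o terms of $w$, all controlled by $\mathcal{E}^{1,4}$.

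Finally, I will stop at $\tau_\delta$ so that the Gronwall exponent $\int_0^{\tau_\delta}\|v\|_V^2\|v\|_{\mathcal{D}(A)}^2$ is bounded by $\delta$. Taking expectations kills the martingale terms, using Burkholder--Davis--Gundy for the supremum. A stochastic Gronwall lemma then yields \eqref{eq-thm-cvg-weak-VREG} with $C_\delta\sim e^{C\delta}$. Convergence in probability follows by splitting on $\{\tau_\delta<T\}$, whose probability is small for large $\delta$ by the almost sure finiteness of $\mathcal{E}^{1,4}$. The main obstacle I expect is the rigorous It\^o formula for the weak solution $u_\epsilon$ paired with the non-symmetric projection $\mathbf{P}_\epsilon$. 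A second difficulty is checking the cancellation of the $\alpha_\sigma$-transport corrections in the $L^2_\epsilon$ geometry without losing powers of $\alpha_\sigma$ beyond $\epsilon^2\alpha_\sigma^2$.
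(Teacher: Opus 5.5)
Your overall architecture is the paper's: energy inequality for $u_\epsilon$, It\^o product rule for $(u_\epsilon,u)_{L_\epsilon^2}$ with the strong PE solution, Li--Titi trilinear bounds, domination of the transport covariation by the $\mathfrak{a}^K$ dissipation, and stochastic Gr\"onwall up to $\tau_\delta$. Recasting it as ``PE $=$ rSNS plus a vertical residual $R$'' is harmless. So is handling $\epsilon^2((U_\epsilon\bcdot\nabla_3)w,W_\epsilon)_{L^2}$ by anisotropic estimates and Gr\"onwall, instead of using the a priori moments of Lemma \ref{lemma-estimate} as the paper does for $J_2$.

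\textbf{The step that fails is your treatment of $\epsilon^2\int_0^t(d_tw,W_\epsilon)_{L^2}$ by integration by parts in time.} Besides the boundary and cross-variation terms you list, this produces the main term $-\epsilon^2\int_0^t(w,d_tW_\epsilon)_{L^2}$. Since $d_tW_\epsilon=d_tw_\epsilon-d_tw$, you would then need $\epsilon^2\int_0^t(w,d_tw_\epsilon)_{L^2}$, i.e.\ the vertical momentum equation of $(\mathcal{P}_\epsilon^{rSNS})$ tested against $(0,w)^\tr$. That field has divergence $-\nabla_H\bcdot v\neq0$, so it is not an admissible test function in Definition \ref{def-GlobMartWeakSol}. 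Even formally it brings in $(w,\partial_z p_\epsilon)_{L^2}$ and the buoyancy term with no $\epsilon^2$ gain, and the pressure of a weak solution is not controlled. If you restore solenoidality by testing with $u$ instead, you merely undo the product rule you started from. None of this is controlled by $\mathcal{E}^{1,4}$.

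The missing idea (the paper's $J_4$) is to use the dynamics of $u$, not of $u_\epsilon$.
\begin{itemize}
\item Since $d_tw=\int_z^1\nabla_H\bcdot d_tv$, move $\int_z^1\nabla_H\bcdot$ onto $W_\epsilon$ by a \emph{spatial} integration by parts. Then $\epsilon^2(d_tw,W_\epsilon)_{L^2}$ becomes $\epsilon^2$ times the pairing of $d_tv$ with the horizontal gradient of a vertical primitive of $W_\epsilon$, whose $L^2$ norm is at most $C\|\nabla_3 W_\epsilon\|_{L^2}$.
\item Substitute \eqref{eq:abstract-PE-v} for $d_tv$. Its drift lies in $L^2$ because $v\in L^2([0,T],\mathcal{D}(A))$, with norm at most $C\big((1+\|v\|_{H^1})\|v\|_{H^2}+\|\theta\|_{H^1}\big)$, using an anisotropic bound for $w^*\partial_zv$.
\item Young's inequality and absorption into $\xi\epsilon^2\|\nabla_3W_\epsilon\|_{L^2}^2$ then give exactly the $\epsilon^2(1+\|v\|_{H^1}^2)\|v\|_{H^2}^2$ term of the statement. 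The martingale part of $d_tv$ is a genuine stochastic integral, handled by BDG.
\end{itemize}

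Consistently, the noise coefficient $R_k$ of $d_tw$ (of size $\lesssim 1+\|v\|_{H^2}$) must enter the It\^o correction of $\|U_\epsilon\|_{L_\epsilon^2}^2$. That correction involves $\mathbf{P}_\epsilon(0,R_k)^\tr$ in addition to $\mathbf{P}_\epsilon\mathcal{I}_K(\phi_k\bcdot\nabla I_{\alpha_\sigma}U_\epsilon)$, not only the latter. The $\epsilon^2\|R_k\|_{L^2}^2$ part, together with the cross term bounded by $\xi\|U_\epsilon\|_{V_\epsilon}^2+C(1+\alpha_\sigma^2)\epsilon^2\sum_k\|R_k\|_{L^2}^2$, is where the factor $(1+\alpha_\sigma^2)\epsilon^2$ actually comes from (the paper's $J_5+J_6$).
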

\begin{remark} \label{remark:weak-thm} \hfill
    \begin{enumerate}
        \item The solutions to (rSNS$_\epsilon^{\alpha_\sigma}$) mentioned in Theorem \ref{theorem-cvg-weak} are not unique, and are defined on abstract stochastic bases, which are \emph{a priori} $\epsilon$-dependent. However, the theorem indicates that, whatever base we may choose, we can control the second moment of the error up to $\tau_\delta \wedge T$, for all $T,\delta>0$, namely in the space $L^2([0,T], V) \cap  L^\infty([0,T], H)$.
        \item Regarding the regularity of $K$ and the proofs given in \cite{DMM2025}, one can prove that the solutions to the problem with ``$K$-regularised weak hydrostatic hypothesis'' are continuous with respect to $K$ -- although it was not done in the original paper. In particular, this shows that the solutions to the (regularised) weak hydrostatic problem converge to the strong hydrostatic one when $K$ vanishes, the latter being equivalent to setting $K=0$. 
        \item If the scaled noise was $\epsilon$-dependent, then the aforementioned convergence result would be similar, yet subject to the following conditions: denote by $(\phi_k^\epsilon)_k$ the eigenfunctions of the scaled Navier-Stokes noise $\sigma_\epsilon dW_t$, and by $a^\epsilon := \sum_k \phi_k^\epsilon (\phi_k^\epsilon)^\tr$, $\Phi_k^\epsilon := \phi_k^\epsilon - \phi_k$, so that, whenever
        \begin{gather}
            \sum_k \|\Phi_k^\epsilon\|_{L^2([0,T], H^2)}^2 + \|a^\epsilon - a\|_{L^\infty([0,T], H^1)}^2= O(\alpha_\sigma^2 \epsilon^2), \text{ and } \sum_k \|\Phi_k^\epsilon\|_{L^\infty([0,T], L^\infty)}^2 = O(\epsilon^2), \label{eq-condition-thm-weak}
        \end{gather}
        then $\E\big[\mathcal{E}_\epsilon^0(0,T \wedge \tau_\delta) \big]$ converges to $0$ at order $\alpha_\sigma^2 \epsilon^2$ provided that $\alpha_\sigma = o(\epsilon^{-1})$. The condition \eqref{eq-condition-thm-weak} is fulfilled in particular when choosing $\phi_k^\epsilon = \phi_k$, i.e. $\Phi_k^\epsilon = 0$.
    \end{enumerate}
\end{remark}

\subsubsection{Improved estimates in "fully periodic" boundary conditions} \label{subsec:main-result-strong}

Assume now that the domains $\mathcal{S}_\epsilon$ and $\mathcal{S}$ are defined as \emph{"fully periodic"} domains, namely $\mathcal{S}_\epsilon = \mathbbm{T}_2 \times \epsilon \mathbbm{T}_1$ and $\mathcal{S} = \mathbbm{T}_3$, see \cite{LT2019}. Then, define the abstract problems ($\Pi_\epsilon^{rSNS}$) and ($\Pi_\epsilon^{PE}$) as the counterparts of ($\mathcal{P}_\epsilon^{rSNS}$) and ($\mathcal{P}_\epsilon^{PE}$) with tridimensionally periodic boundary conditions, that is
\begin{gather}
    \text{$v$ and $\theta$ are periodic with respect to $x,y,z$.}
\end{gather}
In such case, we may redefine $H_1$ and $V_1$ as the closures (by the same norms) of the space of horizontally periodic functions $v$ of regularity $C^\infty(\mathcal{S},\R^2)$ such that $\nabla_H \bcdot \int_{-1}^1 v dz = 0$ on $\mathcal{S}_H$. Similarly, we redefine $H_2$ and $V_2$ as the closures of the space of horizontally periodic functions of regularity $C^\infty(\mathcal{S},\R)$ that average to zero. Then we redefine $H$, $V$ and $\mathcal{D}(A)$ accordingly, and the projectors $\mathbf{P}$ and $\mathbf{P}_\epsilon$ as well. Furthermore, the noise boundary condition \eqref{eq-noiseBC} is replaced by a periodicity assumption on the functions $\phi_k$.

Furthermore, the tridimensional Navier-Stokes equations with transport noise have been shown well-posed in \cite{AV2024-NS-tranport-noise}, under weak regularity assumptions on the initial condition -- typically in Besov spaces. Adapting this proof of local well-posedness to the regularised LU interpretation of the 3D Navier-Stokes equations with initial condition ($u_0,\theta_0) \in H^1(\mathcal{S},\R^{3+1})$, $u_0$ being divergence-free, does not pose any major difficulty.

\begin{proposition}\label{thm:SNS-reg-strong}
    The problem $(\Pi_\epsilon^{rSNS})$, equipped with the initial condition $(u_0, \theta_0) \in V$, admits a unique maximal pathwise solution, which is a priori only local-in-time.
\end{proposition}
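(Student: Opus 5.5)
We would prove Proposition \ref{thm:SNS-reg-strong} in the critical variational framework of Agresti--Veraar, as in \cite{AV2024-NS-tranport-noise}. The first step is to remove the pressure. On the torus $\mathbbm{T}_3$ the projector $\mathbf{P}_\epsilon$ is an explicit Fourier multiplier: it is the orthogonal projection onto divergence-free fields for the weighted inner product $(\bcdot,\bcdot)_{L_\epsilon^2}$. Consequently it commutes with derivatives and is bounded on every $H^s$, with bounds depending on $\epsilon$ only; since $\epsilon$ is fixed here, this is harmless. With $X=(u_\epsilon,\theta_\epsilon)$ we then write $(\Pi_\epsilon^{rSNS})$ as an abstract stochastic evolution equation
\begin{equation*}
dX + A_\epsilon X\,dt = \big(F(X)+f\big)dt + \sum_k \big(B_k X + g_k\big)\,d\beta^k ,
\end{equation*}
posed on the Gelfand triple $\mathcal{D}(A)\hookrightarrow V\hookrightarrow H$. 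The pieces are as follows.
\begin{itemize}
\item $A_\epsilon$ collects $-\mathbf{P}_\epsilon\Delta_3$ and $-\frac12\mathbf{P}_\epsilon\nabla_3\bcdot(\mathfrak{a}^K\nabla_3 I_{\alpha_\sigma^2}\,\bcdot)$, together with the analogous tracer terms.
\item $B_k X = -\mathbf{P}_\epsilon \mathcal{I}_K(\phi_k\bcdot\nabla_3) I_{\alpha_\sigma} u_\epsilon$, plus the tracer transport term.
\item $F(X)$ is the bilinear transport $\mathbf{P}_\epsilon(u_\epsilon^*\bcdot\nabla_3)u_\epsilon$ together with the linear buoyancy term $\mathbf{P}_\epsilon\rho_\epsilon g\bold{e}_z/(\epsilon^2\rho_0)$.
\item $f$ and $g_k$ are the additive forcings $\mathbf{P}_\epsilon\Delta_3 I_{\alpha_\sigma}\phi_k$ and the constant part of the buoyancy.
\end{itemize}
The additive terms lie in the right spaces by \eqref{smoothness-noise}: $\Delta\phi_k\in H^2$, which gives the required $\ell^2$-summability in $V$.

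The second step is the linear theory, namely the stochastic parabolicity of the pair $(A_\epsilon,B)$ on $(H,V,\mathcal{D}(A))$. This is the point we expect to be the main obstacle, because both the noise and the It\^o correction are modified.
\begin{itemize}
\item \emph{Velocity.} The It\^o correction to the $w$-component involves $\alpha_\sigma^2 a^K$, and the noise involves $\alpha_\sigma K*(\phi_k\bcdot\nabla)w$. Since $K\in H^3$, the filtered term $K*[(\phi_k\bcdot\nabla)w]$ is a lower-order (smoothing) operator in $w$. Only the horizontal transport $(\phi_k\bcdot\nabla)v$ is of first order.
\item \emph{Horizontal transport.} Because the noise is divergence-free and periodic, the first-order part satisfies the exact cancellation
\begin{equation*}
\sum_k \|(\phi_k\bcdot\nabla)\partial^\beta v\|^2 = (a\nabla\partial^\beta v,\nabla\partial^\beta v).
\end{equation*}
This is balanced by $\frac12\nabla\bcdot(a\nabla\bcdot)$ for $|\beta|\le 1$. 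The leftover commutators $[\partial^\beta,\phi_k\bcdot\nabla]$ are of order $\le |\beta|$ and are controlled by $\sup_k\|\phi_k\|_{W^{2,\infty}}$. Hence the $\mathbf{P}_\epsilon$-projected estimate $\sum_k\|B_kX\|_V^2 \le \|A_\epsilon X\|_H^2$-type $+\,C\|X\|_V^2$ holds with coercivity coming from $-\Delta_3$.
\item \emph{Role of $\mathbf{P}_\epsilon$.} Since $\mathbf{P}_\epsilon$ is an $L_\epsilon^2$-orthogonal Fourier multiplier commuting with $\nabla$, it does not spoil this cancellation when we work in the $V_\epsilon$-norm, which is equivalent to the $V$-norm for fixed $\epsilon$.
\item \emph{Mismatch in the $w$-component.} There remains a mismatch between $a$ and $a^K$, and between $I_{\alpha_\sigma}$ and $I_{\alpha_\sigma^2}$. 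We would check that $\frac{\alpha_\sigma^2}{2}(a^K\nabla w,\nabla w)$ dominates $\frac{\alpha_\sigma^2}{2}\sum_k\|\mathcal{C}_K(\phi_k\bcdot\nabla w)\|^2$. This holds exactly by the definition $a^K=\sum_k\phi_k\mathcal{C}_K^*\mathcal{C}_K\phi_k^\tr$.
\end{itemize}
With these bounds, maximal $L^2$-regularity for the linearized problem follows from the standard variational (Krylov--Rozovskii) theory.

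The third step handles the nonlinearity. $F$ is bilinear with
\begin{equation*}
\|F(X)-F(Y)\|_H\lesssim (\|X\|_{H^{3/2}}+\|Y\|_{H^{3/2}})\|X-Y\|_{H^{3/2}},
\end{equation*}
which follows from Sobolev embeddings in 3D. Since $H^{3/2}=[H,\mathcal{D}(A)]_{3/4}$, this is exactly the critical condition of \cite{AV2024-NS-tranport-noise} with $\beta=3/4$, $\rho=1$. The linear terms ($u_s$, buoyancy) are of lower order. The abstract local well-posedness theorem of Agresti--Veraar then gives a unique maximal pathwise solution $(X,\tau)$ with $X\in L^2_{loc}([0,\tau),\mathcal{D}(A))\cap C([0,\tau),V)$ almost surely and $\tau>0$ almost surely.

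Global existence is not claimed; indeed, no a priori $V$-bound is available for 3D Navier--Stokes. The proposition therefore only asserts that the solution is local-in-time a priori.
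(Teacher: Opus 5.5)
Your proposal is correct and follows the same route as the paper, which only asserts that the local well-posedness theory of Agresti--Veraar adapts to $(\Pi_\epsilon^{rSNS})$ without major difficulty. Your checks fill in the details the paper omits: stochastic parabolicity (exact $L^2_\epsilon$ balance of the $K$-filtered vertical noise against $a^K$, smoothing by $K*$, and $\mathbf{P}_\epsilon$ being an $L^2_\epsilon$-orthogonal Fourier multiplier commuting with derivatives on the torus) and the critical bilinear bound with $\beta=3/4$, $\rho=1$. One small correction: the parabolicity condition sums over $k$, so the commutator terms $(\partial^\beta\phi_k\cdot\nabla)v$ need $\sum_k\|\phi_k\|_{W^{1,\infty}}^2<\infty$ rather than a bound on $\sup_k\|\phi_k\|_{W^{2,\infty}}$; this summability follows from \eqref{smoothness-noise} and $H^4\hookrightarrow W^{2,\infty}$.
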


\noindent
Therefore, we denote by $(v_\epsilon, \theta_\epsilon)$ the maximal solution to $(\Pi_\epsilon^{rSNS})$, and $\tau_\epsilon^{rSNS}$ its associated stopping time. In this setting, the following theorem holds,
\begin{theorem}\label{theorem-cvg-strong-VREG}
    Assume that the assumptions of Theorem \ref{theorem-cvg-weak-VREG} hold. Let $(v_\epsilon, \theta_\epsilon)$ and $(v, \theta)$ be, respectively, the solutions to the problems $(\Pi_\epsilon^{rSNS})$ and $(\Pi_\epsilon^{PE})$ with the same initial condition $(v_0, \theta_0) \in \mathcal{D}(A)$, and on the same stochastic basis. Let $\tau_\epsilon^{rSNS}$ be the stopping time associated to $(\Pi_\epsilon^{rSNS})$, and define, for all stopping times $\eta,\zeta$ such that $\eta<\zeta$,
    \begin{equation}
    \mathcal{E}_\epsilon^1(\eta,\zeta) := \left\{
    \begin{array}{ll}
        \sup_{[\eta, \zeta]} \frac{1}{2}\Big(\|U_\epsilon\|_{V_\epsilon^2}^2+\|\Theta_\epsilon\|_{V}^2\Big) + \int_{\eta}^{\zeta} \Big(\|U_\epsilon\|_{D_\epsilon(A)}^2 + \|\Theta_{\epsilon}\|_{\mathcal{D}(A)}^2\Big) dt \text{ if $\zeta < \tau_\epsilon^{rSNS}$},\\
        \infty \text{ otherwise}.
    \end{array}
    \right.
    \end{equation}
    Then, using same notations, Theorem \ref{theorem-cvg-weak-VREG} holds, and $v \in L^2(\Omega, L^2([0,T],H^3(\mathcal{S},\R^2)))$. Moreover, for all $T > 0$, there exists a constant $C_1 > 0$ and a stopping time
    \begin{multline}
        \check{\tau}_\epsilon = \inf \Big\{t \in \R_+ \Big| \sup_{[0, t]} (\|U_\epsilon\|_{V_\epsilon}^2 + \|\Theta_\epsilon\|_{V}^2) + \int_0^t(\| U_\epsilon\|_{\mathcal{D}_\epsilon(A)}^2 + \|\Theta_\epsilon\|_{\mathcal{D}(A)}^2) dt > \frac{1}{4C_1} \Big\} \leq \tau_\epsilon^{rSNS}.
    \end{multline}
    such that, for all $\delta > 0$,
    \begin{equation}
        \E \big[\mathcal{E}_\epsilon^1(0, T \wedge \check{\tau}_\epsilon \wedge \tau_\delta)\big]
        \leq C_1\epsilon^2 (1+ \alpha_\sigma^2)\E\int_{0}^{\check{\tau}_{\epsilon} \wedge \tau_\delta \wedge T} (1+\|v\|_{H^3}^2) dt. \label{eq-bound-thm2-VREG}
    \end{equation}
    Furthermore, $\check{\tau}_\epsilon, \tau_\epsilon^{rSNS} \rightarrow \infty$ in probability. Consequently, $\E \big[\mathcal{E}_\epsilon^1(0, T \wedge \check{\tau}_\epsilon \wedge \tau_\delta)\big]$ converges to $0$ whenever $\alpha_\sigma = o(\epsilon^{-1})$, and in such case $\mathcal{E}_\epsilon^1(0, T) \rightarrow 0$ in probability as well.
\end{theorem}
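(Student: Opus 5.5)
We argue by energy estimates on the difference $(U_\epsilon,\Theta_\epsilon)$, at one derivative higher than in Theorem \ref{theorem-cvg-weak-VREG}, following \cite{LT2019}. The modified projector $\mathbf{P}_\epsilon$ and the scaled inner products $L^2_\epsilon$, $V_\epsilon$ are what make It\={o}'s formula usable here.

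\textbf{Step 1 (extra regularity of the primitive solution).} On $\mathbbm{T}_3$ the operators $\partial_x,\partial_y,\partial_z$ commute with $\mathbf{P}$, $\mathbf{P}_\epsilon$ and $\Delta_3$. We therefore differentiate $(\Pi_\epsilon^{PE})$ once and apply It\={o}'s formula to $\|\nabla_3 v\|_{V}^2$. The noise is bidimensional and divergence-free, so the transport term $(\sigma dW_t\bcdot\nabla_3)$ is antisymmetric; its It\={o} correction is compensated by the term $\frac12\nabla_3\bcdot(a\nabla_3\,\cdot)$, up to commutators with $\nabla\phi_k$. These commutators are controlled by \eqref{smoothness-noise}. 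The $K$-filtered pressure terms cost no derivatives because $K\in H^3$. Localizing with $\tau_\delta$ and applying Gronwall and BDG gives $\E\int_0^{T\wedge\tau_\delta}\|v\|_{H^3}^2<\infty$, that is, $v\in L^2(\Omega,L^2([0,T],H^3))$. Pathwise finiteness for all $T$ then follows from $\tau_\delta\to\infty$.

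\textbf{Step 2 (error equation and $H^1$ It\={o} formula).} We write the equation for $U_\epsilon$ by applying $\mathbf{P}_\epsilon$ to the difference of the momentum equations. Since $\mathbf{P}_\epsilon$ kills $\nabla_\epsilon$-gradients, the PE pressure $(p,dp^\sigma_t)$ becomes a remainder. The hydrostatic relation, including the $\alpha_\sigma\epsilon^2$ and $\alpha_\sigma^2\epsilon^2$ martingale and drift pieces, cancels exactly against $\partial_z$ of the pressure. What remains is a residual source of order $\epsilon^2$ made of $d_tw$, $(u^*\bcdot\nabla)w$, $\Delta w$, the tracer coupling, and the weak-hydrostatic residues. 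Its martingale part is $\alpha_\sigma\epsilon^2\mathcal{I}_K(\sigma dW\bcdot\nabla)w$, which is of order $\alpha_\sigma\epsilon^2$.

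We apply It\={o}'s formula to $\|U_\epsilon\|_{V_\epsilon}^2+\|\Theta_\epsilon\|_V^2$, in the form $(-\Delta_3 U_\epsilon,U_\epsilon)_{L^2_\epsilon}$, up to $T\wedge\check\tau_\epsilon\wedge\tau_\delta$; this is legitimate since $\check\tau_\epsilon\le\tau^{rSNS}_\epsilon$. Residual terms are paired with $-\Delta_3 U_\epsilon$ in $L^2_\epsilon$. Each carries a factor $\epsilon$ (from $\epsilon^2$ weights against the $\epsilon$-weighted $W_\epsilon$) and at most $\|w\|_{H^2}\lesssim\|v\|_{H^3}$. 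Young's inequality absorbs $\|U_\epsilon\|_{\mathcal{D}_\epsilon(A)}^2$ and leaves $C\epsilon^2(1+\alpha_\sigma^2)(1+\|v\|_{H^3}^2)$. The It\={o} corrections of the residual martingale, $\alpha_\sigma^2\epsilon^4\sum_k\|\nabla(\phi_k\bcdot\nabla w)\|^2$ weighted by $\epsilon^{-2}$, give $\alpha_\sigma^2\epsilon^2\|v\|_{H^3}^2$.

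\textbf{Step 3 (nonlinear terms, the main obstacle).} The trilinear terms $(U_\epsilon\bcdot\nabla)u$ and $(u\bcdot\nabla)U_\epsilon$ are controlled by $C(\|v\|_{H^2}^2+1)\|U_\epsilon\|_{V_\epsilon}^2$ using anisotropic estimates; on $\tau_\delta$ the prefactor is integrable, so Gronwall applies. The genuinely quadratic term $(U_\epsilon\bcdot\nabla)U_\epsilon$ is bounded by $C_1\|U_\epsilon\|_{V_\epsilon}\|U_\epsilon\|_{\mathcal{D}_\epsilon(A)}^2$, with $C_1$ uniform in $\epsilon$ thanks to the thin-domain scaled Agmon/Ladyzhenskaya inequalities of \cite{LT2019}. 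Before $\check\tau_\epsilon$ we have $\|U_\epsilon\|_{V_\epsilon}\le (4C_1)^{-1/2}$, so this term is absorbed into the dissipation. The hard part is showing that all multiplicative noise terms on the error are also absorbed uniformly in $\epsilon$. These include the $\alpha_\sigma$-scaled transport of $W_\epsilon$, its Stratonovich-type correction with $\mathfrak{a}^K$, and the mismatch between $a^K$ and $a$ in the vertical component. We handle them with the $K$-filter and bidimensionality: $\phi^z_k=0$ and $\partial_z\phi_k=0$ make the commutators with $\partial_z$ vanish. The stochastic integral is treated by BDG, bounding the supremum by $\frac12\E\sup$ plus integrable terms. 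Gronwall under $\tau_\delta$ then yields \eqref{eq-bound-thm2-VREG}.

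\textbf{Step 4 (stopping times).} By Step 1 the right-hand side of \eqref{eq-bound-thm2-VREG} is bounded by $C_{\delta}\epsilon^2(1+\alpha_\sigma^2)\to0$. Markov's inequality gives $\mathbbm{P}(\check\tau_\epsilon\le T\wedge\tau_\delta)\le 4C_1\E[\mathcal{E}^1_\epsilon]\to0$. Letting $\delta\to\infty$ gives $\check\tau_\epsilon\to\infty$, and hence $\tau_\epsilon^{rSNS}\to\infty$, in probability. Convergence of $\mathcal{E}_\epsilon^1(0,T)$ in probability then follows by splitting on the events $\{\check\tau_\epsilon\wedge\tau_\delta<T\}$.
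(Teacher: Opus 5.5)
Your architecture is the paper's. You prove Lemma \ref{lemma-energy-estimate-PE} for $v$ (It\^{o} on $\|v\|_{\mathcal{D}(A)}^2$ up to $\tau_\delta$, with the filtered pressure costing no derivatives). You then apply It\^{o} to $\|U_\epsilon\|_{V_\epsilon}^2+\|\Theta_\epsilon\|_V^2$ with the anisotropic bounds of \cite{LT2019}, absorb the quadratic term below the threshold defining $\check\tau_\epsilon$, and close with BDG and the stochastic Gr\"onwall lemma.

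The gap is in Step 3, at exactly the point that separates this theorem (regime $\alpha_\sigma=o(\epsilon^{-1})$) from Theorem \ref{theorem-cvg-strong}. Every constant must be independent of $\alpha_\sigma$, since $\alpha_\sigma\sim\epsilon^{-\gamma}\to\infty$ is the case of interest. Bidimensionality only removes the $\partial_z$-commutators. The commutators of $\partial_x,\partial_y$ with $\phi_k\bcdot\nabla$ survive. Moreover the filtered transport $f\mapsto K*[(\phi_k\bcdot\nabla_3)f]$ is not antisymmetric, so the vertical martingale term does not vanish at top order. If you handle these by commutator cancellation, two problems remain:
\begin{itemize}
\item In the drift you are left with $\alpha_\sigma^2\epsilon^2\|\nabla W_\epsilon\|_{L^2}\|\nabla^2 W_\epsilon\|_{L^2}$. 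This either puts a coefficient of order $\alpha_\sigma^4$ into Gr\"onwall or requires $\alpha_\sigma^2\epsilon\to0$.
\item In the martingale you are left with $\alpha_\sigma\epsilon^2\|\nabla W_\epsilon\|_{L^2}\|\nabla^2 W_\epsilon\|_{L^2}$. BDG turns this into $\xi\,\E\sup\|U_\epsilon\|_{V_\epsilon}^2+C_\xi\alpha_\sigma^2\,\E\int\|U_\epsilon\|_{\mathcal{D}_\epsilon(A)}^2dt$, which cannot be absorbed for large $\alpha_\sigma$.
\end{itemize}

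The missing idea is to use the smoothing of $K\in H^3$ instead. Since $\nabla_3\bcdot\phi_k=0$, you have $K*[(\phi_k\bcdot\nabla_3)W_\epsilon]=(\nabla_3 K)*(\phi_k W_\epsilon)$, and $\nabla_3\bcdot(a^K\nabla_3 W_\epsilon)$ likewise involves no derivative of $W_\epsilon$. Hence the following are all bounded by $C\alpha_\sigma^2\epsilon^2\|W_\epsilon\|_{H^1}^2$, up to $\xi$-absorbable terms and $O(\epsilon^2\|v\|_{H^3}^2)$ sources: the vertical It\^{o} correction, the $a^K$-drift, the vertical martingale (via BDG), and its cross-variation with the residual. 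Because $\|W_\epsilon\|_{H^1}\lesssim\|V_\epsilon\|_{H^2}\le\|U_\epsilon\|_{\mathcal{D}_\epsilon(A)}$, these terms are absorbed into the dissipation once $\alpha_\sigma\epsilon$ is small. That is precisely where $\alpha_\sigma=o(\epsilon^{-1})$ enters. The paper's write-up is equally terse at this point: it records a term $C_\xi(1+\alpha_\sigma^2)^2\|U_\epsilon\|_{V_\epsilon}^2$ and does not track it.

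There are also some smaller corrections. In Step 2 the PE pressure cancels the $\alpha_\sigma\epsilon^2K*[(\sigma dW_t\bcdot\nabla)w]$ and $\alpha_\sigma^2\epsilon^2$ drift pieces exactly, as you say. So in this regularised comparison the residual martingale is not $\alpha_\sigma\epsilon^2\mathcal{I}_K(\sigma dW_t\bcdot\nabla)w$. It is the martingale part of $\epsilon^2 d_t w$. Its coefficient is essentially $(\phi_k\bcdot\nabla_H)w$ plus lower order, which is of order one in $\alpha_\sigma$ and of size $\|v\|_{H^3}$ in $H^1$. Also, there is no $a$ versus $a^K$ mismatch and no weak-hydrostatic residue here; those belong to Theorem \ref{theorem-cvg-strong}. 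The buoyancy term is an order-one coupling to $\Theta_\epsilon$ (the paper's $J_3$), closed by Gr\"onwall, not an $O(\epsilon^2)$ source. Finally, with your bound $C_1\|U_\epsilon\|_{V_\epsilon}\|U_\epsilon\|_{\mathcal{D}_\epsilon(A)}^2$ and $\|U_\epsilon\|_{V_\epsilon}^2\le(4C_1)^{-1}$, the prefactor is $\sqrt{C_1}/2$, which need not be below $1$. Write the quadratic term after Young as $C_1\|U_\epsilon\|_{V_\epsilon}^2\|U_\epsilon\|_{\mathcal{D}_\epsilon(A)}^2$, as the paper does, so that the threshold gives $\frac14$.
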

\warning{
Theorems \ref{theorem-cvg-weak-VREG} and \ref{theorem-cvg-strong-VREG} show that the asymptotic error $\E \big[\mathcal{E}_\epsilon^1(0, T \wedge \check{\tau}_\epsilon \wedge \tau_\delta)\big]$ between the regularised Navier-Stokes equations (rSNS$_{\alpha_\sigma}^\epsilon$) and the weakly hydrostatic primitive equations (PE$_{\alpha_\sigma}^\epsilon$) is typically bounded by a term of order $O(\alpha_\sigma^2\epsilon^2)$. As we shall see below, the error between Navier-Stokes and the strongly hydrostatic primitive equations is of order $\alpha_\sigma^4 \epsilon^2$. Hence, the weak hydrostatic assumption does provide a better approximation. 

Thus, we have identified a ``grey zone'' where the weakly hydrostatic primitive equations (PE$_{\alpha_\sigma}^\epsilon$) constitute a suitable approximation of the regularised Navier-Stokes system (rSNS$_{\alpha_\sigma}^\epsilon$), while the strongly hydrostatic primitive equations (PE$_{0}^0$) do not \emph{a priori}. Namely, this regime corresponds to $\alpha_\sigma$ being typically of order $\epsilon^{-\gamma}$, where $\gamma \in [1/2, 1)$. Yet, finding an appropriate approximation of (rSNS$_{\alpha_\sigma}^\epsilon$) in the regime $\alpha_\sigma \sim \epsilon^{-\gamma}$ with $\gamma \geq 1$ remains an open question.
\begin{remark} \hfill\\
    Using the notations of Remark \ref{remark:weak-thm}.3, the LHS of \eqref{eq-bound-thm2-VREG} converges to $0$ under the assumption \eqref{eq-condition-thm-weak} and the additional one $\sum_k \|\Phi_k^\epsilon\|_{H^4}^2 \rightarrow 0$, provided that $\alpha_\sigma = o(\epsilon^{-1})$.
\end{remark}
}

\section{Comparison to the (non-regularised) LU Navier-Stokes equations} \label{sec-non-reg}

In this section, we consider the (non-regularised) LU Navier-Stokes equations, to study their convergence towards the LU primitive equations. Although seeking the convergence of the ``direct'' system seems natural, ultimately this setting does not allow to distinguish orders of convergence of the weak and strong hydrostatic assumptions with regard to the coefficient $\alpha_\sigma$, as $\epsilon$ tends to $0$. This difference comes from the presence of an extra term in the estimation, which is due  to the regularisation error between the two versions of the LU Navier-Stokes equations. For this reason, this setting is slightly more intricate. Hence, the proofs presented in Sections \ref{sec-thm-cvg-weak} and \ref{sec-thm-cvg-strong} are given in the setting presented here (see Theorems \ref{thm:SNS-weak} and \ref{thm:SNS-strong}), although Theorems \ref{thm:SNS-reg-weak} and \ref{thm:SNS-reg-strong} can be proven similarly. Notice that, since $K=0$ allows to recover the strong hydrostatic assumption, the results of this section provide a comparison between (PE$_{\alpha_\sigma}^\epsilon$) and (SNS$_{\alpha_\sigma}^\epsilon$).

\subsection{Abstract formulation of the (non-regularised) LU Navier-Stokes equations}

Let us derive first the abstract formulation of the scaled LU Navier-Stokes equations without regularistion, in the fashion of what was presented in Section \ref{sec:derivation-NS-PE}.

From now, for simplicity and without loss of generality, we assume that $\Upsilon = 1$. In the following, we apply the ``rigid-lid boundary conditions'' projector $\mathbf{P}_\epsilon$ to (SNS$_{\alpha_\sigma}^\epsilon$) -- see equations \eqref{eq:SNS-noreg-u} to \eqref{eq:SNS-noreg-div-density}. Doing so, we infer the expression of the following abstract problem $(\mathcal{P}_\epsilon^{SNS})$,
\begin{align}
    d_t u_\epsilon + \mathbf{P}_\epsilon (u_\epsilon^* dt \bcdot \nabla_3) u_\epsilon &+ \mathbf{P}_\epsilon (\sigma dW_t \bcdot \nabla_3) I_{\alpha_\sigma} u_\epsilon - \mathbf{P}_\epsilon \frac{1}{2} \nabla_3 \bcdot(a \nabla_3 I_{\alpha_\sigma^2} u_\epsilon) dt \nonumber\\
    &- \mathbf{P}_\epsilon \Delta_3 (u_\epsilon dt + I_{\alpha_\sigma} \sigma dW_t) +  \mathbf{P}_\epsilon \frac{\rho_\epsilon}{\epsilon^2 \rho_0} g \bold{e}_z dt = 0, \\
    d_t \theta_\epsilon + \Big((u_\epsilon^* dt + & \sigma dW_t) \bcdot \nabla_3\Big) \theta_\epsilon - \frac{1}{2} \nabla_3 \bcdot(a \nabla_3 \theta_\epsilon) dt - \Delta_3 \theta_\epsilon dt = 0,
\end{align}
with $\rho_\epsilon = \rho_0(1+\theta_\epsilon)$. Similarly, we define the problem $(\Pi_\epsilon^{SNS})$ and by using ``periodic boundary conditions'' projectors, as previously mentioned in Section \ref{sec-abstract-formulation}.

\subsection{Main results}

We state hereafter our main results. Essentially, the two first theorems adapt classical results from the (deterministic) Leray theory to the our stochastic setting. Notice that Definition \ref{def-GlobMartWeakSol} can be adapted to define martingale solutions of the problem $(\mathcal{P}_\epsilon^{SNS})$.
\begin{proposition} \label{thm:SNS-weak}
    The problem $(\mathcal{P}_\epsilon^{SNS})$, equipped with the initial condition $(u_0, \theta_0) \in H$, admits a least one global-in-time martingale solution.
\end{proposition}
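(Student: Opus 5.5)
We plan to follow the Galerkin–compactness–Skorokhod scheme of \cite{DHM_2023}, adapted to the anisotropic $\epsilon$-setting by systematically working with the scaled inner product $(\bcdot,\bcdot)_{L_\epsilon^2}$ and the projector $\mathbf{P}_\epsilon$. Since $\epsilon>0$ is fixed, the scaled problem is simply a rewriting of the 3D LU Navier-Stokes system on a fixed domain, and constants are allowed to depend on $\epsilon$ and $\alpha_\sigma$.

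\textbf{Galerkin approximation.} Take an orthonormal basis of $\mathcal{F}$ for $(\bcdot,\bcdot)_{L_\epsilon^2}$, made of eigenfunctions of the Stokes-type operator $\mathbf{P}_\epsilon(-\Delta_3)$ with the boundary conditions \eqref{boundary-conditions}. Take also a basis of $H_2$ made of eigenfunctions of the Robin/Neumann Laplacian. Project $(\mathcal{P}_\epsilon^{SNS})$ onto the first $n$ modes to obtain a finite-dimensional SDE with locally Lipschitz coefficients.

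\textbf{Uniform moment estimates.} Apply It\={o}'s formula to $\|u_n\|_{L_\epsilon^2}^{2p}$ and $\|\theta_n\|_{L^2}^{2p}$. The key structural points are as follows.
\begin{enumerate}
\item The transport term $(u^*\bcdot\nabla_3)u$ is not skew in $L_\epsilon^2$ because of the It\={o}-Stokes correction. This produces a term $(\nabla\bcdot u_s)|u|^2$ and weights, which are bounded using $u_s\in L^\infty H^4$.
\item The noise term $(\sigma dW\bcdot\nabla_3)I_{\alpha_\sigma}u$, paired with $u$ in $L_\epsilon^2$, is skew since $\nabla\bcdot\sigma dW=0$ and $\phi_k\bcdot\mathbf{n}=0$. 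Its It\={o} correction is then balanced by $\frac12\nabla_3\bcdot(a\nabla_3 I_{\alpha_\sigma^2}u)$, up to the commutator with $\mathbf{P}_\epsilon$. The resulting quantity $\sum_k\|\mathbf{P}_\epsilon(\phi_k\bcdot\nabla I_{\alpha_\sigma}u)\|^2_{L_\epsilon^2}\le(a\nabla I_{\alpha_\sigma}u,\nabla I_{\alpha_\sigma}u)_{L_\epsilon^2}$, because $\mathbf{P}_\epsilon$ is an orthogonal projection in $L_\epsilon^2$. The cross terms with the additive part $\Delta I_{\alpha_\sigma}\phi_k$ are controlled via \eqref{smoothness-noise} and Young's inequality.
\item The buoyancy term $(\rho_\epsilon g/\rho_0,w)$ is bounded by $\epsilon^{-2}$ times $L^2$ norms, which is harmless at fixed $\epsilon$. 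It couples to the $\theta$ estimate, which is standard.
\end{enumerate}
Burkholder–Davis–Gundy and Gronwall then give bounds on $\E\sup_t\|u_n\|^{2p}_{L_\epsilon^2}+\E\int\|u_n\|^{2p-2}\|\nabla u_n\|^2$ that are uniform in $n$. In the same way one obtains $W^{\gamma,p}([0,T],V')$ bounds for $\gamma<1/2$, using the Flandoli–Gatarek lemma for the stochastic integrals.

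\textbf{Compactness and passage to the limit.} Tightness in $L^2([0,T],H)\cap C([0,T],V')$ follows by Aubin–Lions–Simon, following \cite{FG_1995}. Skorokhod–Jakubowski then provides a new stochastic basis with a.s. convergent copies. The limit is identified in the weak formulation of Definition~\ref{def-GlobMartWeakSol}: the nonlinear terms converge by strong $L^2$ convergence, and the martingale part is identified through convergence of quadratic variations, i.e. by the martingale representation argument of \cite{DHM_2023}. The energy inequality \eqref{eq-defWeakSol-energ} between stopping times passes to the limit by lower semicontinuity, in the standard way. Weak continuity in $H$ follows from $L^\infty H$ bounds together with continuity in $V'$.

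\textbf{Main obstacle.} We expect the main difficulty to be the energy estimate. The anisotropic factor $I_{\alpha_\sigma}$ breaks the exact cancellation between the It\={o} correction and the LU diffusion once $\mathbf{P}_\epsilon$ is interposed, since $\mathbf{P}_\epsilon$ does not commute with $\phi_k\bcdot\nabla$. The plan is to rely only on the contraction property of $\mathbf{P}_\epsilon$ in $L_\epsilon^2$, so that the projected quadratic variation is dominated by the full diffusion term. The non-positive remainder is then kept, exactly as written in \eqref{eq-defWeakSol-energ}.
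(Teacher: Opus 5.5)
Your construction is exactly the argument of \cite{DHM_2023} that the paper invokes without details. It consists of a Galerkin scheme in $\mathcal{F}$ for $(\bcdot,\bcdot)_{L^2_\epsilon}$, uniform moment bounds, Aubin--Lions/Flandoli--Gatarek tightness, Skorokhod--Jakubowski, and identification of the martingale part, and your a priori estimate is sound. In fact $I_{\alpha_\sigma}$ is not an obstacle. With the $\epsilon^2$ weight of $L^2_\epsilon$ and the $I_{\alpha_\sigma^2}$ in the diffusion, $\sum_k\|\phi_k\bcdot\nabla_3 I_{\alpha_\sigma}u\|^2_{L^2_\epsilon}=(a\nabla_3 I_{\alpha_\sigma}u,\nabla_3 I_{\alpha_\sigma}u)_{L^2_\epsilon}$ matches the diffusion exactly. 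The only loss comes from $\mathbf{P}_\epsilon$, and your contraction argument handles it as for any Leray-projected transport noise.

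\textbf{The gap is in the last step.} Item 2 of Definition~\ref{def-GlobMartWeakSol} requires \eqref{eq-defWeakSol-energ} almost surely for \emph{every} pair of stopping times $\eta<\zeta$. This does not follow from lower semicontinuity, because $\frac1p\|u\|^{2p}_{L^2_\epsilon}(\eta)$ sits on the right-hand side.
\begin{itemize}
\item At a random time you only have $u_n(\eta)\rightharpoonup u(\eta)$ weakly in $H$, from convergence in $C([0,T],V')$ plus the $L^\infty(H)$ bound. Hence $\|u(\eta)\|_{L^2_\epsilon}\le\liminf_n\|u_n(\eta)\|_{L^2_\epsilon}$, which is the wrong direction.
\item Strong convergence in $L^2([0,T],H)$ only gives $\|u_n(s)\|\to\|u(s)\|$ for a.e.\ deterministic $s$, after extraction.
\item So the construction yields the inequality for $\eta=0$ and for $\eta$ in a full-measure set of times, e.g.\ stopping times with countably many values in that set. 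This is the usual strong energy inequality, or the a.s.\ supermartingale property of Flandoli--Romito.
\item Reaching arbitrary stopping times would require strong right-continuity of $t\mapsto u(t)$ in $L^2_\epsilon$. The construction does not provide this; it is the classical strong-energy-inequality issue for Leray--Hopf solutions.
\end{itemize}

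A smaller point in the same step: the It\={o}-correction term on the right involves $\nabla u$, which converges only weakly. You must first combine it with the $a$-term on the left into $\frac12\sum_k\|(I-\mathbf{P}_\epsilon)(\phi_k\bcdot\nabla_3 I_{\alpha_\sigma}u)\|^2_{L^2_\epsilon}$ before lower semicontinuity applies.

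The paper's one-line appeal to \cite{DHM_2023} does not address the stopping-time issue either. To complete the proof, you must either weaken item 2 to the a.e.-starting-time form or give a separate argument for general stopping times.
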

 The proof of Proposition \ref{thm:SNS-weak} comes from the work performed for the ``direct'' LU Navier-Stokes equations in \cite{DHM_2023}. Hence, global martingale solutions exist whenever the initial condition $(u_0, \theta_0)$ is in $H$. To investigate the convergence such a solution for a vanishing aspect ratio, we denote it by $(\hat{u}_\epsilon^{SNS}, \hat{\theta}_\epsilon^{SNS})$, and introduce the quantity $\hat{\mathcal{E}}_\epsilon^0$ in the following. It is the counterpart of $\mathcal{E}_\epsilon^0$ that is adapted to the problem $(\mathcal{P}_\epsilon^{SNS})$. Let $T>0$, then, for all stopping times $\eta, \zeta$ such that $0<\eta<\zeta<T$, we define
\begin{gather}
    \hat{\mathcal{E}}_\epsilon^0(\eta,\zeta) : = \sup_{[\eta,\zeta]} \frac{1}{2}\|\hat{U}_\epsilon\|_{L_\epsilon^2}^2 + \int_\eta^\zeta \|\nabla_3 \hat{U}_\epsilon\|_{L_\epsilon^2}^2 dt + \sup_{[\eta,\zeta]} \frac{1}{2}\|\hat{\Theta}_\epsilon\|_{L^2}^2 + \int_\eta^\zeta \|\nabla_3 \hat{\Theta}_\epsilon\|_{L^2}^2 dt,
\end{gather}
where we denote by $\hat{U}_\epsilon := \hat{u}_\epsilon^{SNS} - u$ and $\hat{\Theta}_\epsilon := \hat{\theta}_\epsilon^{SNS} - \theta$. We remind that $(u,\theta)$ denote the solution to the weak hydrostatic primitive equation problem $(\mathcal{P}_\epsilon^{PE})$, and that, with the notation
\begin{equation}
    \mathcal{E}^{1,4}(\eta,\zeta) : = \sup_{[\eta,\zeta]} \frac{1}{2}\|(v,\theta)\|_{V}^4 + \int_\eta^\zeta \|(v,\theta)\|_{V}^2 \|(v,\theta)\|_{\mathcal{D}(A)}^2 dt,
\end{equation}
we have, for $T>0$, almost surely,
\begin{equation}
    \mathcal{E}^{1,4}(0,T) < \infty.
\end{equation}
Hence, the following theorem is in the line of Theorem \ref{theorem-cvg-weak}, and of which proof follows essentially the same reasoning.
\begin{theorem}\label{theorem-cvg-weak}
    Assume that $K \in H^3$, and that the noise term $\sigma dW_t$ in the primitive equations is bidimensional and fulfils the regularity conditions \eqref{smoothness-noise}. Let $\epsilon >0$, and equip $(\mathcal{P}_\epsilon^{rSNS})$ and $(\mathcal{P}_\epsilon^{PE})$ with the same initial condition $(v_0, \theta_0)^\tr \in V$, so that $U_\epsilon\vert_{t=0} = \Theta_\epsilon\vert_{t=0} = 0$. Then, for all $T >0$, and $\delta > 0$, there exists a constant $C_\delta >0$ such that,
    \warning{
    \begin{multline}
        \E\Big[\hat{\mathcal{E}}_\epsilon^0(0,T \wedge \tau_\delta) \Big] \leq C_\delta \epsilon^2  \E\Bigg[\int_0^{T \wedge \tau_\delta} \| v \|_{H^1}^4 + (1 + \|v\|_{H^1}^{2}) \|v\|_{H^2}^{2} dt \Bigg] \\
        + C_\delta (1+\alpha_\sigma^4) \epsilon^2 \E\Bigg[\int_0^{T \wedge \tau_\delta} \sum_k \|(I-K*)[ \phi_k \bcdot \nabla w ]\|_{L^2}^2 + \|(a - a^K)\nabla w\|_{L^2}^2 dt \Bigg], \label{eq-thm-cvg-weak}
    \end{multline}
    }
    where $\tau_{\delta} := \inf \Big\{t>0 \Big|\mathcal{E}^{1,4}(0,t) \geq \delta \Big\}$, so that $\tau_\delta \rightarrow \infty$ in probability as $\delta \rightarrow \infty$. In particular, $\hat{\mathcal{E}}_\epsilon^0(0,T) \rightarrow 0$ in probability whenever $\alpha_\sigma = o(\epsilon^{-1/2})$.
\end{theorem}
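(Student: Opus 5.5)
We compare a global martingale solution $(\hat u_\epsilon^{SNS},\hat\theta_\epsilon^{SNS})$ of $(\mathcal{P}_\epsilon^{SNS})$ (Proposition \ref{thm:SNS-weak}) with the pathwise solution $(v,\theta)$ of $(\mathcal{P}_\epsilon^{PE})$ on the same stochastic basis. The approach is an energy estimate on the error $(\hat U_\epsilon,\hat\Theta_\epsilon)$ in the scaled norm $\|\cdot\|_{L_\epsilon^2}$. The first step is to rewrite the primitive equations as a perturbed version of the scaled Navier–Stokes system. The vertical momentum equation of (SNS$_{\alpha_\sigma}^\epsilon$) is satisfied by $u=(v,w)$ up to a residual of size $\epsilon^2$ times
\[
d_tw+(u^*\bcdot\nabla_3)w\,dt-\Delta_3 w\,dt
\]
plus the discrepancy between the filtered and unfiltered noise terms. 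That discrepancy consists of $\alpha_\sigma\epsilon^2(I-K*)[(\sigma dW_t\bcdot\nabla_3)w]$ and $\frac{\alpha_\sigma^2\epsilon^2}{2}\nabla_3\bcdot((a-a^K)\nabla_3 w)$. Here we use that, for a bidimensional noise, $\sigma^z=0$ and the hydrostatic pressure relations of (PE$_{\alpha_\sigma}^\epsilon$) absorb the remaining terms into a $\nabla_\epsilon$-gradient. Applying $\mathbf{P}_\epsilon$ removes all pressure contributions, since $\mathbf{P}_\epsilon$ kills $\nabla_\epsilon f$. We then obtain an equation for $\hat U_\epsilon$ driven by these residuals, with zero initial datum. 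The It\^o correction terms in the difference equation are handled exactly as in the energy inequality \eqref{eq-defWeakSol-energ}. Because the difference is taken between a weak solution and a strong one, we use the energy inequality for $\hat u_\epsilon^{SNS}$, the It\^o formula for $\|u\|_{L_\epsilon^2}^2$ (legitimate thanks to the regularity of $v$), and the weak formulation tested against $u$ for the cross term $(\hat u_\epsilon^{SNS},u)_{L_\epsilon^2}$. This is the standard weak–strong argument of \cite{LT2019}, made stochastic.

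The second step estimates the drift terms. The transport term $(\hat U_\epsilon\bcdot\nabla_3)u$ is controlled in the anisotropic way used for primitive equations: $\hat V_\epsilon$ against $\nabla v$ by Ladyzhenskaya-type inequalities, and $\hat W_\epsilon\partial_z v$ by writing $\hat W_\epsilon=\int_z^1\nabla_H\bcdot\hat V_\epsilon$. This yields bounds of the form
\[
\tfrac12\|\nabla_3\hat U_\epsilon\|^2_{L_\epsilon^2}+C\big(\|v\|_{H^1}^2\|v\|_{H^2}^2+\|v\|_{H^1}^4\big)\|\hat U_\epsilon\|_{L_\epsilon^2}^2 .
\]
The buoyancy term $\epsilon^{-2}(\rho_\epsilon-\rho,\cdot)$ cancels against the corresponding term in the tracer energy through $w$, as in the deterministic case. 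The It\^o–Stokes drift and the $\frac12\nabla\bcdot(a\nabla\cdot)$ terms are bounded using the smoothness assumption \eqref{smoothness-noise}. The deterministic residual $\epsilon^2(\partial_t w+\dots)$ is treated via Young's inequality. We use $\epsilon^2(\cdot,\hat W_\epsilon)\le \epsilon^2\|\cdot\|^2_{H^{-1}}+\tfrac14\epsilon^2\|\nabla\hat W_\epsilon\|^2$, and handle $d_tw$ through the $v$-equation, since $w=w(v)$. This produces the first term $C_\delta\epsilon^2\,\E\int(\|v\|_{H^1}^4+(1+\|v\|_{H^1}^2)\|v\|_{H^2}^2)$.

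The third step treats the martingale and regularisation residuals. Their quadratic variation contributes $\alpha_\sigma^2\epsilon^4\sum_k\|(I-K*)[\phi_k\bcdot\nabla w]\|^2_{L^2}$, measured in the $\epsilon^{-2}$-weighted vertical component, so of size $\alpha_\sigma^2\epsilon^2$. The cross-variation between the error noise $\alpha_\sigma\epsilon^2\mathbf{P}_\epsilon(\phi_k\bcdot\nabla)\hat W_\epsilon$ and the residual $\alpha_\sigma\epsilon^2(I-K*)[\phi_k\bcdot\nabla w]$ is bounded by Young's inequality. The It\^o correction $\alpha_\sigma^2\epsilon^2\|(\phi_k\bcdot\nabla)\hat W_\epsilon\|^2$ is exactly compensated by the dissipative term $\frac{\alpha_\sigma^2}{2}(a\nabla\hat W_\epsilon,\nabla\hat W_\epsilon)$, by the usual LU energy balance. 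The Stratonovich-like drift mismatch $\alpha_\sigma^2\epsilon^2\nabla\bcdot((a-a^K)\nabla w)$, paired with $\hat W_\epsilon$, is integrated by parts and split by Young's inequality as
\[
C\alpha_\sigma^4\epsilon^2\|(a-a^K)\nabla w\|^2+\tfrac14\epsilon^2\|\nabla\hat W_\epsilon\|^2 .
\]
This explains the factor $(1+\alpha_\sigma^4)$.

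Finally, we localise up to $T\wedge\tau_\delta$, so that $\int\big(\|v\|_{H^1}^2\|v\|_{H^2}^2+\|v\|_{H^1}^4\big)\le C\delta$. A stochastic Gronwall lemma, combined with the Burkholder–Davis–Gundy inequality for the supremum of the martingale term (absorbing $\tfrac14\E\sup\|\hat U_\epsilon\|^2$), gives \eqref{eq-thm-cvg-weak} with $C_\delta\sim e^{C\delta}$. The convergence in probability follows from the statement's own claim that $\tau_\delta\to\infty$ in probability as $\delta\to\infty$. The residuals are $O((1+\alpha_\sigma^4)\epsilon^2)$, since $\E\int\|w\|_{H^1}^2\lesssim\E\int\|v\|_{H^2}^2$ is finite, so they vanish when $\alpha_\sigma=o(\epsilon^{-1/2})$.

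The main obstacle is rigorously justifying the energy equality for the difference between a martingale weak solution and the strong solution, in particular the It\^o cross term. A second difficulty is keeping exact track of the $\alpha_\sigma$-weighted noise terms so that the $\alpha_\sigma^2\epsilon^2$ It\^o corrections in $\hat W_\epsilon$ cancel rather than producing an uncontrolled $\alpha_\sigma^2$ factor. I would first carry out the computation on Galerkin approximations of $(\mathcal{P}_\epsilon^{SNS})$ and pass to the limit by lower semicontinuity, as in \cite{DHM_2023}.
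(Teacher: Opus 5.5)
Your route is essentially the paper's. You split $\frac12\|\hat U_\epsilon\|_{L^2_\epsilon}^2$ into the energy inequality for $\hat u^{SNS}_\epsilon$, It\^o's formula for $\|u\|^2_{L^2_\epsilon}$ and the cross term; you remove pressures with $\mathbf{P}_\epsilon$; you finish with BDG and the stochastic Gr\"onwall lemma up to $T\wedge\tau_\delta$. Writing (PE$_{\alpha_\sigma}^\epsilon$) as (SNS$_{\alpha_\sigma}^\epsilon$) with a $\nabla_\epsilon$-gradient pressure plus a purely vertical residual is a tidy packaging of what the paper does by hand through the identity $\mathbf{P}_\epsilon\nabla_\epsilon\tilde\pi_k=0$ in $J_5+J_6$.

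For the cross term, the paper approximates the strong solution by $u_n=\mathbf{P}_nu$ rather than Galerkin-approximating $\hat u^{SNS}_\epsilon$. That uses only the weak formulation, so the bound holds for every martingale solution, and no Skorokhod limit has to be taken through $\tau_\delta$.

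Your splitting also produces the vertical error term $\epsilon^2((\hat U_\epsilon\bcdot\nabla_3)w,\hat W_\epsilon)$, which your transport paragraph omits; it contains $\nabla w\sim\nabla^2 v$. The paper never forms this term: it bounds $\epsilon^2((u^*_\epsilon\bcdot\nabla_3)w_\epsilon,w)$ as a source using Lemma \ref{lemma-estimate}. In your setting it closes anisotropically, using $|w|\le\int_{-1}^1|\nabla_H v|\,dz$, with weights $\|v\|_{H^1}^4$ and $\|v\|_{H^1}^2\|v\|_{H^2}^2$.

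Two steps are wrong as written.

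(i) The buoyancy term does not cancel against the tracer energy. There is no background stratification, so the $L^2$ energy of $\hat\Theta_\epsilon$ contains only the coupling $(\hat U_\epsilon\bcdot\nabla_3\theta,\hat\Theta_\epsilon)$, which does not cancel $g(\hat\Theta_\epsilon,\hat W_\epsilon)$. No cancellation is needed: in the $L^2_\epsilon$ pairing the factor $\epsilon^{-2}$ is compensated. Since $\hat W_\epsilon=\int_z^1\nabla_H\bcdot\hat V_\epsilon$, you get $g(\hat\Theta_\epsilon,\hat W_\epsilon)\le C_\xi\|\hat\Theta_\epsilon\|_{L^2}^2+\xi\|\nabla_H\hat V_\epsilon\|_{L^2}^2$. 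You then need a separate $L^2$ estimate on $\hat\Theta_\epsilon$ (the paper's Step 2.2). Its Gr\"onwall weight involves $\|\theta\|_{H^1}^2\|\theta\|_{H^2}^2$, which is controlled because $\tau_\delta$ is defined through $(v,\theta)$, not $v$ alone.

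(ii) The residual $\epsilon^2 d_tw$ is not deterministic. Only its drift part can be handled by Young's inequality in $H^{-1}$. Since $w=\int_z^1\nabla_H\bcdot v$, $d_tw$ carries martingale coefficients $\int_z^1\nabla_H\bcdot\mathcal{R}(\phi_k\bcdot\nabla v)+\int_z^1\Delta_H\tilde\pi_k$ (up to sign). These enter the BDG term and the quadratic and cross variation of the error noise, alongside $\alpha_\sigma(I-K*)[\phi_k\bcdot\nabla w]$; they are the $\int_z^1\nabla_H\bcdot\mathcal{R}(\phi_k\bcdot\nabla v)$ and $\int_z^1\Delta_H\tilde\pi_k$ contributions in the paper's $J_5+J_6$. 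They are $O(\epsilon^2(1+\|v\|_{H^2}^2))$. Their cross variation with $\alpha_\sigma\epsilon^2(\phi_k\bcdot\nabla_3)\hat W_\epsilon$ is absorbable only at the price of a factor $\alpha_\sigma^2$, exactly as in the paper's bound on $J_5+J_6$. This does not affect the conclusion for $\alpha_\sigma=o(\epsilon^{-1/2})$.
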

\begin{remark}
    The estimate \eqref{eq-thm-cvg-weak} suggests that the weak hydrostatic assumption improves the constant in front of the term of order $\alpha_\sigma^4 \epsilon^2$: choosing the regularising kernel $K$ such that $\|I - K *\|_{ \mathcal{L}_2(L^2(\mathcal{S},\R^3))}$ is small would reduce the value of the factor
        $$\E\Bigg[ \int_0^{T \wedge \tau_\delta} \sum_k \|(I-K*)[ \phi_k \bcdot \nabla w ]\|_{L^2}^2 + \|(a - a^K)\nabla w\|_{L^2}^2 dt \Bigg]$$
    if $w$ was $K$-independent. However, the asymptotics of $w$ when $K*$ approaches $I$ is not known precisely.   
\end{remark}
\warning{In turn, the estimate above shows that (PE$_{\alpha_\sigma}^\epsilon$) approximates (rSNS$_{\alpha_\sigma}^\epsilon$) much better than (SNS$_{\alpha_\sigma}^\epsilon$). This is natural regarding the way these problems were constructed. Notice that, by similar arguments, the error of convergence between the strongly hydrostatic primitive equations and the regularised Navier-Stokes can be shown of order $O(\alpha_\sigma^4 \epsilon^2)$ as well. The proof runs almost identically and is therefore omitted. As such, the error of convergence between the primitive equations and the regularised Navier-Stokes is asymptotically less with the weak hydrostatic hypothesis than with the strong one.}

Following the method of Section \ref{sec-abstract-formulation}, we define the problem $(\Pi_\epsilon^{SNS})$, which the counterpart of $(\mathcal{P}_\epsilon^{SNS})$ with periodic boundary conditions. Then the following proposition holds.
\begin{proposition}\label{thm:SNS-strong}
    The problem $(\Pi_\epsilon^{SNS})$, equipped with the initial condition $(u_0, \theta_0) \in V$, admits a unique maximal pathwise solution, which is a priori only local-in-time.
\end{proposition}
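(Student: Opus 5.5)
Local well-posedness of $(\Pi_\epsilon^{SNS})$ follows the strategy used for Proposition \ref{thm:SNS-reg-strong}: reduce the problem to an abstract stochastic evolution equation of parabolic type with transport noise in the critical/subcritical setting of \cite{AV2024-NS-tranport-noise}, and invoke their local theory. For fixed $\epsilon>0$ the scaled system is just the physical LU Navier-Stokes system on the torus $\mathbbm{T}_2\times\epsilon\mathbbm{T}_1$ under a change of variables. We may therefore either work directly with $\mathbf{P}_\epsilon$ on $\mathbbm{T}_3$ with the $L^2_\epsilon$ inner product, or undo the scaling and apply the known result on the physical torus. I would work in the scaled variables, since $\mathbf{P}_\epsilon$ is then a Fourier multiplier (orthogonal projection onto fields divergence-free with respect to the weighted metric). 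It commutes with derivatives and is bounded on every $H^s$ with $\epsilon$-dependent constants, which are harmless here because $\epsilon$ is fixed.

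\textbf{Step 1: rewrite the system.} I would write $(\Pi_\epsilon^{SNS})$ as
\[
dX + \big(\mathcal{A}_\epsilon X - F(X) \big)dt = \big(B X + G\big) dW_t,
\]
with $X=(u_\epsilon,\theta_\epsilon)$ and the following pieces.
\begin{itemize}
\item The second-order operator $\mathcal{A}_\epsilon X = -\mathbf{P}_\epsilon\Delta_3 u_\epsilon - \tfrac12 \mathbf{P}_\epsilon\nabla_3\bcdot(a\nabla_3 I_{\alpha_\sigma^2}u_\epsilon)$, together with the analogous terms for $\theta_\epsilon$.
\item The nonlinearity $F$, consisting of $\mathbf{P}_\epsilon(u_\epsilon^*\bcdot\nabla_3)u_\epsilon$, the tracer advection and the buoyancy term $\mathbf{P}_\epsilon \rho_\epsilon g\mathbf e_z/(\epsilon^2\rho_0)$.
\item The linear transport noise $B_k X = -\mathbf{P}_\epsilon(\phi_k\bcdot\nabla_3)I_{\alpha_\sigma}u_\epsilon$.
\item The additive noise $G_k=\mathbf{P}_\epsilon\Delta_3 I_{\alpha_\sigma}\phi_k$.
\end{itemize}
The additive part is handled by subtracting the solution $Z$ of the linear problem. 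By \eqref{smoothness-noise}, $Z$ lies in $L^2(\Omega; C([0,T],H^1)\cap L^2(0,T;H^2))$, so the equation for $X-Z$ has only linear multiplicative noise.

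\textbf{Step 2: stochastic parabolicity.} This is the key structural condition: $\mathcal{A}_\epsilon - \tfrac12\sum_k B_k^*B_k$-type coercivity. Because the Itô correction $\tfrac12\nabla_3\bcdot(a\nabla_3 I_{\alpha_\sigma^2}\cdot)$ exactly matches the quadratic variation of the transport term $(\sigma dW\bcdot\nabla_3)I_{\alpha_\sigma}$, the leading symbol left after the Itô correction is $-\Delta_3$. Hence the system is uniformly parabolic with constant $1$, independent of $\alpha_\sigma$. The subtle point is the projector: $\mathbf{P}_\epsilon$ does not commute with multiplication by $\phi_k$. The commutator $[\mathbf{P}_\epsilon,\phi_k\bcdot\nabla_3]$ is, however, of order zero, since the symbol of $\mathbf{P}_\epsilon$ is homogeneous of degree $0$ and $\phi_k$ is smooth with $\sum_k\|\phi_k\|_{H^4}^2<\infty$. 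It therefore only contributes lower-order terms, which can be absorbed. In the $H^1$ energy estimate, namely Itô's formula for $\|\nabla_3 u_\epsilon\|_{L^2_\epsilon}^2$, I would verify this by direct computation: the top-order terms cancel, and the remainder is bounded by $C(\alpha_\sigma,\epsilon)\|X\|_{H^1}\|X\|_{H^2}$.

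\textbf{Step 3: local existence and uniqueness.} I would truncate the nonlinearity $F$ with $\chi_R(\|X\|_{H^1})$. For the truncated problem, a Galerkin scheme in Fourier modes (invariant under $\mathbf{P}_\epsilon$ on the torus) gives uniform $L^2(\Omega;C([0,T],H^1)\cap L^2(0,T;H^2))$ bounds from Step 2, and pathwise uniqueness follows from an $L^2$ estimate on differences. The Gyöngy–Krylov argument then yields a strong solution. Removing the truncation via stopping times $\tau_R=\inf\{t: \|X\|_{H^1}\ge R\}$ and letting $R\to\infty$ gives the unique maximal pathwise solution, with blow-up time $\tau_\epsilon^{SNS}=\lim_R\tau_R$. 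Alternatively, Steps 1 and 2 place us exactly within the hypotheses of \cite{AV2024-NS-tranport-noise}, which gives the result directly.

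The main obstacle is Step 2. One has to check that the non-commutation of $\mathbf{P}_\epsilon$ with the transport noise and with the anisotropic factor $I_{\alpha_\sigma}$ does not destroy the exact cancellation between the Itô correction and the quadratic variation. The cancellation is clear componentwise, but $\mathbf{P}_\epsilon$ mixes $v_\epsilon$ and $w_\epsilon$ with weight $\epsilon^2$. I would handle this by computing in the $L^2_\epsilon$ inner product, for which $\mathbf{P}_\epsilon$ is self-adjoint.
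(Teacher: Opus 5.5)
Your proposal is correct and takes the paper's route. The paper gives no detailed argument and only asserts that the local theory of \cite{AV2024-NS-tranport-noise} adapts straightforwardly once the system is written in abstract form, which is what your Steps 1--2 and the final invocation do; your truncation, Galerkin and Gy\"ongy--Krylov Step 3 is a standard self-contained alternative.

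One correction to Step 2. For $\alpha_\sigma\neq 1$ the top-order terms do not cancel exactly, and the non-commutation of $\mathbf{P}_\epsilon$ with $I_{\alpha_\sigma}$ is not a lower-order commutator. Indeed $\nabla_3\bcdot\big[(\phi_k\bcdot\nabla_3)I_{\alpha_\sigma}u_\epsilon\big]=(\alpha_\sigma-1)(\phi_k\bcdot\nabla_3)\partial_z w_\epsilon$ plus lower-order terms, so $\mathbf{P}_\epsilon$ removes a top-order gradient part. What makes the argument work is that $\mathbf{P}_\epsilon$ is an $L^2_\epsilon$-orthogonal Fourier multiplier that commutes with derivatives. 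Hence the quadratic variation of the projected noise is bounded above by that of the unprojected noise, and this one-sided inequality is all that stochastic parabolicity requires.
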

Reasoning as for Proposition \ref{thm:SNS-reg-strong}, the proof of Proposition \ref{thm:SNS-strong} can be adapted straightforwardly from \cite{AV2024-NS-tranport-noise}. From this result, the problem $(\Pi_\epsilon^{SNS})$ is ensured to have a unique maximal pathwise solution, provided that the initial condition is in $V$. Hence, we state a analogous of Theorem \ref{theorem-cvg-strong}.
\begin{theorem}\label{theorem-cvg-strong}
    Assume that the assumptions of Theorem \ref{theorem-cvg-weak} hold. Let $(v_\epsilon, \theta_\epsilon)$ and $(v, \theta)$ be, respectively, the solutions to the problems $(\Pi_\epsilon^{SNS})$ and $(\Pi_\epsilon^{PE})$ with the same initial condition $(v_0, \theta_0) \in \mathcal{D}(A)$, and on the same stochastic basis. Let $\tau_\epsilon^{SNS}$ be the stopping time associated to $(\Pi_\epsilon^{SNS})$, and define, for all stopping times $\eta,\zeta$ such that $\eta<\zeta$,
    \begin{equation}
    \hat{\mathcal{E}}_\epsilon^1(\eta,\zeta) := \left\{
    \begin{array}{ll}
        \sup_{[\eta, \zeta]} \frac{1}{2}\Big(\|\hat{U}_\epsilon\|_{V_\epsilon^2}^2+\|\hat{\Theta}_\epsilon\|_{V}^2\Big) + \int_{\eta}^{\zeta} \Big(\|\hat{U}_\epsilon\|_{D_\epsilon(A)^2}^2 + \|\hat{\Theta}_{\epsilon}\|_{\mathcal{D}(A)}^2\Big) dt \text{ if $\zeta < \tau_\epsilon^{SNS}$},\\
        \infty \text{ otherwise}.
    \end{array}
    \right.
    \end{equation}
    Then, using same notations, Theorem \ref{theorem-cvg-weak} holds, and $v \in L^2(\Omega, L^2([0,T],H^3(\mathcal{S},\R^2))$. Moreover, for all $T > 0$, there exists a constant $C_1 > 0$ and a stopping time
    \begin{multline}
        \check{\tau}_\epsilon = \inf \Big\{t \in \R_+ \Big| \sup_{[0, t]} (\|\hat{U}_\epsilon\|_{V_\epsilon}^2 + \|\hat{\Theta}_\epsilon\|_{V}^2) + \int_0^t(\| \hat{U}_\epsilon\|_{\mathcal{D}_\epsilon(A)}^2 + \|\hat{\Theta}_\epsilon\|_{\mathcal{D}(A)}^2) dt > \frac{1}{4C_1} \Big\} \leq \tau_\epsilon^{SNS}.
    \end{multline}
    such that, for all $\delta > 0$,
    \begin{multline}
        \E \big[\mathcal{E}_\epsilon^1(0, T \wedge \check{\tau}_\epsilon \wedge \tau_\delta)\big]
        \leq C_1\epsilon^2 (1+ \alpha_\sigma^4)\E\int_{0}^{\check{\tau}_{\epsilon} \wedge \tau_\delta \wedge T} \Bigg( (1+\|v\|_{H^3}^2)\\
        \times (1+\|(a - a^K)\nabla w\|_{H^1}^2 + \sum_k \|(I-K*)[(\phi_k \bcdot \nabla_3)w]\|_{H^1}^2) \Bigg) dt. \label{eq-bound-thm2}
    \end{multline}
    Furthermore, $\check{\tau}_\epsilon, \tau_\epsilon^{SNS} \rightarrow \infty$ in probability. Consequently, $\E \big[\mathcal{E}_\epsilon^1(0, T \wedge \check{\tau}_\epsilon \wedge \tau_\delta)\big]$ converges to $0$ whenever $\alpha_\sigma = o(\epsilon^{-1/2})$, and in such case $\mathcal{E}_\epsilon^1(0, T) \rightarrow 0$ in probability as well.
\end{theorem}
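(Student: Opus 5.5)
We plan to derive an evolution equation for the error $(\hat U_\epsilon,\hat\Theta_\epsilon)$ in the fully periodic setting and apply It\={o}'s formula to the $V_\epsilon$-norm, i.e. to $\|\nabla_3 \hat U_\epsilon\|_{L_\epsilon^2}^2+\|\nabla_3\hat\Theta_\epsilon\|_{L^2}^2$. This requires writing the (PE$_{\alpha_\sigma}^\epsilon$) solution $(u,\theta)$ as a solution of the scaled Navier--Stokes system with residual forcings. The hydrostatic relations for $p$ and $dp^\sigma_t$ in (PE$_{\alpha_\sigma}^\epsilon$) reproduce exactly the filtered vertical terms $\alpha_\sigma\epsilon^2K*[(\sigma dW_t\bcdot\nabla)w]$ and $\alpha_\sigma^2\epsilon^2\nabla_3\bcdot(a^K\nabla_3 w)$. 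Hence, after applying $\mathbf P_\epsilon$, $u$ satisfies $(\Pi_\epsilon^{SNS})$ up to the following vertical remainders, each multiplied by $\epsilon^2$:
\begin{gather*}
 d_t w+(u^*\bcdot\nabla_3)w\,dt-\Delta_3 w\,dt,\quad \alpha_\sigma(I-K*)[(\sigma dW_t\bcdot\nabla_3)w],\\
 \tfrac{\alpha_\sigma^2}{2}\nabla_3\bcdot((a-a^K)\nabla_3 w)\,dt.
\end{gather*}
The additive term $\alpha_\sigma\Delta_3\sigma^z dW_t$ vanishes by the bidimensional assumption \eqref{eq:hyp-2D-noise}. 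Since $\mathbf P_\epsilon$ is orthogonal for $(\bcdot,\bcdot)_{L^2_\epsilon}$ and commutes with $\Delta_3$ on the torus, the pressure terms disappear when we test against $-\Delta_3\hat U_\epsilon$ in $L^2_\epsilon$. The $\epsilon^2$ weight in the norm then makes every remainder contribute at order $\epsilon^2$ times norms of $w$. The term $d_tw$ is handled by expressing it through $\partial_z$ of the $\nabla_H\bcdot$ of the $v$-equation; this costs one derivative more than in Theorem \ref{theorem-cvg-weak} and explains the $\|v\|_{H^3}^2$ factor. We will first check that $v\in L^2(\Omega,L^2([0,T],H^3))$ by an $H^2$ energy estimate for $(\Pi_\epsilon^{PE})$ in the periodic setting, using the regularity \eqref{smoothness-noise}, $K\in H^3$, and initial data in $\mathcal D(A)$.

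Next we estimate the terms in the It\={o} expansion, working up to $T\wedge\check\tau_\epsilon\wedge\tau_\delta$. The nonlinear terms split into $(\hat U_\epsilon\bcdot\nabla_3)\hat U_\epsilon$, which is cubic in the error, and the terms $(\hat U_\epsilon\bcdot\nabla_3)u$ and $(u\bcdot\nabla_3)\hat U_\epsilon$, which are linear in the error with coefficients controlled by $\mathcal E^{1,4}\le\delta$. For the cubic term we use the anisotropic thin-domain estimates of \cite{LT2019}, uniform in $\epsilon$, giving a bound of the form
$$C\|\hat U_\epsilon\|_{V_\epsilon}\|\hat U_\epsilon\|_{\mathcal D_\epsilon(A)}^2.$$
Before $\check\tau_\epsilon$ this is absorbed by the dissipation, because $\|\hat U_\epsilon\|_{V_\epsilon}^2\le 1/(4C_1)$ there.

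The noise terms require the most care. The transport-noise term $\mathbf P_\epsilon(\sigma dW_t\bcdot\nabla_3)I_{\alpha_\sigma}\hat U_\epsilon$ produces an It\={o} correction that must cancel against $\tfrac12\nabla_3\bcdot(a\nabla_3 I_{\alpha_\sigma^2}\hat U_\epsilon)$ up to commutators involving $\nabla\phi_k$. The bidimensional, divergence-free structure makes the horizontal part cancel exactly. The vertical part involves $\alpha_\sigma^2\epsilon^2\|\nabla w\|$-type quantities, but these are error terms with $\epsilon^2$ weight; controlling them requires $\alpha_\sigma\epsilon$ to be small, consistent with the hypothesis $\alpha_\sigma=o(\epsilon^{-1/2})$. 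The martingale remainder $\alpha_\sigma\epsilon^2(I-K*)[\dots]$ enters through its quadratic variation, which is of size $\alpha_\sigma^2\epsilon^4\sum_k\|(I-K*)[\phi_k\bcdot\nabla w]\|_{H^1}^2$ relative to the $L^2_\epsilon$ weighting. Its cross term with the transport noise of $\hat U_\epsilon$ is handled by Young's inequality, which produces the $\alpha_\sigma^4\epsilon^2$ factor. The stochastic integrals are treated with Burkholder--Davis--Gundy in the supremum, and half of the supremum is absorbed into the left-hand side. We then obtain \eqref{eq-bound-thm2} by Gronwall with coefficients bounded by $\delta$ thanks to $\tau_\delta$. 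The temperature equation is handled identically; its coupling term $\epsilon^{-2}g\rho_\epsilon w$ cancels in the combined estimate once it is paired with the buoyancy.

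Finally, we show that $\check\tau_\epsilon\to\infty$ in probability. By Markov's inequality applied to \eqref{eq-bound-thm2},
$$\mathbb P(\check\tau_\epsilon\wedge\tau_\delta<T)\le 4C_1\,\E\,\mathcal E^1_\epsilon\to0$$
whenever $\alpha_\sigma^4\epsilon^2\to0$; we then let $\delta\to\infty$. Because $\check\tau_\epsilon\le\tau_\epsilon^{SNS}$, the same conclusion holds for $\tau_\epsilon^{SNS}$, and convergence in probability of $\mathcal E_\epsilon^1(0,T)$ follows. I expect the main obstacle to be the It\={o}-correction cancellation at the $H^1$ level with the anisotropic $I_{\alpha_\sigma}$ and the filter $K$. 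In particular, we must ensure that no term of order $\alpha_\sigma^2$ without an $\epsilon^2$ weight survives. My first attempt would be to exploit $\sigma^z=0$ and the $z$-independence of $\sigma^H$ to commute $\partial_z$ through the noise.
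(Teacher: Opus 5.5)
Your plan follows the paper's proof. Its ingredients are:
\begin{itemize}
\item the $H^2$--$H^3$ estimate for $(\Pi_\epsilon^{PE})$ up to $\tau_\delta$ (Lemma~\ref{lemma-energy-estimate-PE});
\item It\={o}'s formula for $\|U_\epsilon\|_{V_\epsilon}^2$, with $\mathbf P_\epsilon$ removing the pressure on the torus;
\item the anisotropic bounds of \cite{LT2019};
\item cancellation of the leading It\={o} correction against $(a\nabla_3\nabla_3 I_{\alpha_\sigma}U_\epsilon,\nabla_3\nabla_3 I_{\alpha_\sigma}U_\epsilon)_{L^2_\epsilon}$, up to commutators;
\item absorption of the cubic term before $\check\tau_\epsilon$, Burkholder--Davis--Gundy and stochastic Gr\"onwall, then Markov and $\delta\to\infty$.
\end{itemize}
Your explicit bookkeeping of the residual $\epsilon^2(d_tw+(u^*\bcdot\nabla_3)w-\Delta_3 w)$ is a good point. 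The $W_\epsilon$ equation written in the paper's proof does not display this residual; it is handled there like the term $J_4$ in the proof of Theorem~\ref{theorem-cvg-weak}. Note, however, that $d_tw=\int_z^1\nabla_H\bcdot d_tv\,dz'$ is a vertical integral, not a $\partial_z$, of the horizontal divergence of the $v$-equation.

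One claim is false: the buoyancy coupling does not cancel. The tracer equation is pure transport--diffusion with no $w$-dependent source (there is no background stratification), so there is nothing to pair it with. In the $L^2_\epsilon$ pairing, the $\epsilon^{-2}$ in front of the buoyancy is exactly compensated by the $\epsilon^2$ weight, which leaves the $O(1)$ term $-(g\Theta_\epsilon,AW_\epsilon)_{L^2}$. The paper simply bounds this term by $C\|\Theta_\epsilon\|_V^2+\xi\|V_\epsilon\|_{\mathcal D(A)}^2$, using that $W_\epsilon$ is a vertical integral of $\nabla_H\bcdot V_\epsilon$. The resulting linear $\|\Theta_\epsilon\|_V^2$ term is harmless in Gr\"onwall, because its constant is independent of $\epsilon$ and $\alpha_\sigma$ and $\Theta_\epsilon(0)=0$.

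There are two smaller slips.
\begin{itemize}
\item The quadratic variation of the filtered martingale remainder in the $V_\epsilon$ norm is of order $\alpha_\sigma^2\epsilon^2\sum_k\|(I-K*)[(\phi_k\bcdot\nabla_3)w]\|_{H^1}^2$, not $\alpha_\sigma^2\epsilon^4$. The $\epsilon^2$ in front of the remainder disappears once the vertical equation is divided by $\epsilon^2$ to write it in $L^2_\epsilon$ form. This is harmless, since the final bound carries the factor $(1+\alpha_\sigma^4)\epsilon^2$ anyway.
\item Markov's inequality should be applied to the event $\{\check\tau_\epsilon<T\wedge\tau_\delta\}$. The event $\{\check\tau_\epsilon\wedge\tau_\delta<T\}$ contains $\{\tau_\delta<T\}$, whose probability does not vanish as $\epsilon\to0$.
\end{itemize}
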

%Quite differently from Theorems \ref{theorem-cvg-weak} and \ref{theorem-cvg-strong} about (rSNS$_{\alpha_\sigma}^\epsilon$), here the error between (SNS$_{\alpha_\sigma}^\epsilon$) and (PE$_{\alpha_\sigma}^\epsilon$) is only shown bounded by a term of order $O(\epsilon^2(1+\alpha_\sigma^4))$. We thus interpret Theorems \ref{theorem-cvg-weak} and \ref{theorem-cvg-strong} as (PE$_{\alpha_\sigma}^\epsilon$) being a suitable approximation of (SNS$_{\alpha_\sigma}^\epsilon$) whenever $\alpha_\sigma = o(\epsilon^{-1/2})$. As such, due to the presence of the term $K$-dependent terms, this analysis does not distinguish the orders of convergence of the weak and strong hydrostatic hypotheses. The regularisation kernel $K$ yet adds a degree of freedom which impacts \emph{a priori} the error of convergence.
In Theorem \ref{theorem-cvg-strong}, the divergence of $\tau_\epsilon^{SNS}$ to infinity stands as a generalisation in the stochastic setting of the well-posedness of the Navier-Stokes equations for thin enough domains. Nevertheless, almost sure estimates would typically be needed to obtain a result as strong as in the deterministic context using the methods of \cite{LT2019}, while in our work they only hold in second order moment \emph{a priori}. This is due to the non-boundedness of our noise -- see \cite{chueshov2008random}. \warning{Again, such result is standard in the context of SPDEs.

In addition, Theorem \ref{thm:SNS-strong} can be adapted to the estimation of convergence between the strongly hydrostatic primitive equations and the regularised Navier-Stokes, similarly as for Theorem \ref{thm:SNS-weak}. The proof is omitted since it runs almost identically.}

The two following sections are dedicated to proving Theorems \ref{theorem-cvg-weak} and \ref{theorem-cvg-strong}. The proofs of Theorems \ref{theorem-cvg-weak-VREG} and \ref{theorem-cvg-strong-VREG} are omitted, since they run almost identically as that of Theorems \ref{theorem-cvg-weak} and \ref{theorem-cvg-strong}. In fact, the latter case is slightly more intricate, due to the presence of additional $K$-dependent terms in the final energy estimates. For this reason, these proofs are developed in full details hereafter. \warning{We emphasize that these proofs only cover the bidimensional noise case, as this assumption is needed to establish the well-posedness of the LU primitive equations \cite{DMM2025}.}

\section{Proof of Theorem \ref{theorem-cvg-weak}}\label{sec-thm-cvg-weak}

First, we show the following lemma, which will be useful further in the proof.
\begin{lemma}\label{lemma-estimate}
For all $T>0$, there exists a constant $C$ that is independent of $\epsilon$, such that
\begin{equation*}
     \E\Big[\sup_{[0,T]} \|u_\epsilon\|_{L_\epsilon^2}^4\Big] + \E \int_0^T \|v_\epsilon\|_H^2 \|v_\epsilon\|_V^2 + \epsilon^4 \|w_\epsilon\|_{L^2}^2 \|w_\epsilon\|_V^2 dt \leq C,
\end{equation*}
\end{lemma}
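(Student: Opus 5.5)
The plan is to derive the bound directly from the energy inequality \eqref{eq-defWeakSol-energ} of Definition~\ref{def-GlobMartWeakSol} (adapted to $(\mathcal{P}_\epsilon^{SNS})$), taking $p=2$ for the velocity and $p=1$ for the tracer. The energy inequality is used on $[0,\zeta]$ with $\zeta = T\wedge\tau_R$, where $\tau_R$ is the first time $\|u_\epsilon\|_{L_\epsilon^2}^2+\|\theta_\epsilon\|_{L^2}^2$ exceeds $R$. We then take expectations, use the Burkholder--Davis--Gundy inequality for the martingale terms, close with Gronwall, and let $R\to\infty$ by Fatou. Every constant must be shown independent of $\epsilon$ and, as far as the velocity part goes, of $\alpha_\sigma$.

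\emph{Tracer first.} The $\theta_\epsilon$ equation is of transport-noise type with $\nabla_3\bcdot\sigma dW_t=0$ and tangential $\phi_k$. In the It\={o} formula for $\|\theta_\epsilon\|_{L^2}^2$:
\begin{itemize}
\item the stochastic transport term vanishes;
\item its quadratic variation $\sum_k\|\phi_k\bcdot\nabla\theta_\epsilon\|^2=(a\nabla\theta_\epsilon,\nabla\theta_\epsilon)$ is exactly cancelled by $-\frac12\nabla_3\bcdot(a\nabla_3\theta_\epsilon)$;
\item the drift $(u_\epsilon^*\bcdot\nabla)\theta_\epsilon$ contributes only $\frac12(\nabla\bcdot u_s,\theta_\epsilon^2)\le C\|\theta_\epsilon\|^2$, since $u_s\in L^\infty H^4$.
\end{itemize}
This gives $\E\sup\|\theta_\epsilon\|^4+\E\int\|\theta_\epsilon\|^2\|\nabla\theta_\epsilon\|^2\le C$, uniformly in $\epsilon$.

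\emph{Velocity and the key cancellation.} In the $L_\epsilon^2$ product the vertical component carries the weight $\epsilon^2$. By the bidimensional noise assumption \eqref{eq:hyp-2D-noise}, $\sigma^z dW_t=0$, so $I_{\alpha_\sigma}\sigma dW_t=\sigma dW_t$. The transport martingale term $(\phi_k\bcdot\nabla I_{\alpha_\sigma}u_\epsilon,u_\epsilon)_{L_\epsilon^2}$ vanishes by incompressibility and the non-penetration condition \eqref{eq-noiseBC}; the only surviving martingale is the additive one $\big((\Delta\phi_k,0),u_\epsilon\big)$. For the correction term we use that $\mathbf{P}_\epsilon$ is an $L_\epsilon^2$-orthogonal projection, hence a contraction. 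Then
\[
\|\mathbf{P}_\epsilon(\phi_k\bcdot\nabla I_{\alpha_\sigma}u_\epsilon)+\Delta\phi_k\|_{L_\epsilon^2}^2\le \|\phi_k\bcdot\nabla I_{\alpha_\sigma}u_\epsilon\|_{L_\epsilon^2}^2+2\|\phi_k\bcdot\nabla I_{\alpha_\sigma}u_\epsilon\|_{L_\epsilon^2}\|\Delta\phi_k\|+\|\Delta\phi_k\|^2 .
\]
Summed over $k$, the first term equals $(a\nabla I_{\alpha_\sigma}u_\epsilon,\nabla I_{\alpha_\sigma}u_\epsilon)_{L_\epsilon^2}$, which cancels exactly against the It\={o}--Stokes dissipation on the left of \eqref{eq-defWeakSol-energ}, including the $\epsilon^2\alpha_\sigma^2$ part. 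This exact cancellation is what makes the estimate uniform in $\alpha_\sigma$. For the cross term we use $\Delta\phi_k$ horizontal, so only $\phi_k\bcdot\nabla v_\epsilon$ appears. It is bounded by Young's inequality, with the regularity \eqref{smoothness-noise}, as $\frac14\|\nabla v_\epsilon\|^2+C$, and is absorbed by the viscous term.

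\emph{Remaining terms.} The drift gives $|(u_s\bcdot\nabla u_\epsilon,u_\epsilon)_{L_\epsilon^2}|\le C\|u_\epsilon\|_{L_\epsilon^2}^2$. For the buoyancy term, the $\epsilon^{-2}$ scaling is compensated by the $\epsilon^2$ weight, leaving $(g\rho_\epsilon/\rho_0,w_\epsilon)$. Its constant part integrates to zero because $\int_{-1}^1 w_\epsilon\,dz$ is a horizontal divergence with $v_\epsilon\bcdot\bold{n}_H=0$. For the $\theta_\epsilon$ part, $w_\epsilon=\int_z^1\nabla_H\bcdot v_\epsilon$ gives $\|w_\epsilon\|\le C\|\nabla v_\epsilon\|$, hence $|g(\theta_\epsilon,w_\epsilon)|\le\frac14\|\nabla v_\epsilon\|^2+C\|\theta_\epsilon\|^2$, controlled by the tracer estimate.

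\emph{Closing.} Multiplying by the weight $\|u_\epsilon\|_{L_\epsilon^2}^2$ of the $p=2$ inequality and applying BDG to $\int\|u_\epsilon\|^2(\Delta\phi_k,u_\epsilon)d\beta_k$ gives a bound by $\frac14\E\sup\|u_\epsilon\|^4+C\E\int\|u_\epsilon\|^4$. Gronwall then yields $\E\sup\|u_\epsilon\|_{L_\epsilon^2}^4+\E\int\|u_\epsilon\|_{L_\epsilon^2}^2\|\nabla_3u_\epsilon\|_{L_\epsilon^2}^2\le C$. Finally, $\|u_\epsilon\|_{L_\epsilon^2}^2\|\nabla_3 u_\epsilon\|_{L_\epsilon^2}^2\ge\|v_\epsilon\|_H^2\|v_\epsilon\|_V^2+\epsilon^4\|w_\epsilon\|^2\|\nabla w_\epsilon\|^2$, which is the claim.

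The main obstacle is the correction-term step: justifying the contraction bound for $\mathbf{P}_\epsilon$ in $L_\epsilon^2$ and the exact cancellation with the $\alpha_\sigma$-weighted It\={o}--Stokes dissipation. This must be done at the level of the energy inequality satisfied by martingale solutions, not at the level of an It\={o} formula applied to the solution itself.
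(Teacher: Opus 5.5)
Your route is the paper's: its proof is a one-line appeal to \eqref{eq-defWeakSol-energ} with $p=2$, Burkholder--Davis--Gundy and the stochastic Gr\"onwall lemma. Your proposal fills in what that line leaves implicit: the tracer bound needed for the buoyancy term, the cancellation of the It\={o}--Stokes terms, and the vanishing mean of $w_\epsilon$.

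One step does not work as written, namely the cross term in the correction. Write $X_k=\phi_k\cdot\nabla I_{\alpha_\sigma}u_\epsilon$ and $Y_k=(\Delta\phi_k^H,0)$. Your displayed bound produces $2\|X_k\|_{L_\epsilon^2}\|Y_k\|$, and $\|X_k\|_{L_\epsilon^2}^2=\|\phi_k\cdot\nabla v_\epsilon\|^2+\epsilon^2\alpha_\sigma^2\|\phi_k\cdot\nabla w_\epsilon\|^2$, so the vertical part is present. The exact cross term $2(\mathbf{P}_\epsilon X_k,Y_k)_{L_\epsilon^2}$ does not reduce to $2(\phi_k\cdot\nabla v_\epsilon,\Delta\phi_k^H)$ either. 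The horizontal component of $\mathbf{P}_\epsilon X_k$ depends on the vertical component of $X_k$, because the projection onto $\mathcal{F}$ couples them through $w=\int_z^1\nabla_H\cdot v\,dz'$. The exact cancellation consumes all of the It\={o}--Stokes dissipation. Hence the leftover $\epsilon\alpha_\sigma\|\phi_k\cdot\nabla w_\epsilon\|\,\|\Delta\phi_k\|$ can only be absorbed by the viscous term $\epsilon^2\|\nabla w_\epsilon\|^2$, at the cost of a constant growing with $\alpha_\sigma$. That still proves the lemma for fixed $\alpha_\sigma$. It does not give the uniformity in $\alpha_\sigma$ you claim, which is what matters when $\alpha_\sigma$ grows as $\epsilon\to0$.

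The fix is to apply the contraction to $X_k+Y_k$ rather than to $X_k$ alone. The orthogonal splitting $\mathbf{P}_\epsilon X_k+Y_k=\mathbf{P}_\epsilon(X_k+Y_k)+(I-\mathbf{P}_\epsilon)Y_k$ gives $\|\mathbf{P}_\epsilon X_k+Y_k\|_{L_\epsilon^2}^2\le\|X_k\|_{L_\epsilon^2}^2+2(X_k,Y_k)_{L_\epsilon^2}+2\|Y_k\|^2$. Since $Y_k$ has no vertical component, $(X_k,Y_k)_{L_\epsilon^2}=(\phi_k\cdot\nabla v_\epsilon,\Delta\phi_k^H)$ now genuinely involves only $v_\epsilon$, and the rest of your argument goes through.

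A smaller point concerns the tracer. Definition~\ref{def-GlobMartWeakSol} contains no energy inequality for $\theta_\epsilon$. The It\={o} formula you apply to $\|\theta_\epsilon\|^2$ therefore needs the same justification you ask for on the velocity side, namely an inequality inherited from the construction. Since no martingale term survives there, that inequality yields a pathwise bound. This is what legitimately gives you fourth moments of $\theta_\epsilon$ even though you only take $p=1$.
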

\emph{Proof:} This results follows immediately from the energy estimate \eqref{eq-defWeakSol-energ} of Definition \ref{def-GlobMartWeakSol} with $p=2$, the Burkholder-Davis-Gundy inequality and the stochastic Grönwall lemma of \cite{GHZ_2009}.
\CQFD

\noindent
Let $T,\delta > 0$. Let two stopping times $\eta$ and $\zeta$ be such that $T \wedge \tau_\delta \geq \zeta > \eta \geq 0$, where $\tau_\delta$ is defined as in Theorem \ref{theorem-cvg-weak}. In order to apply the stochastic Grönwall lemma \cite{GHZ_2009}, we seek an estimate of the following quantity,
\begin{align}
    \E\Bigg[\sup_{[\eta,\zeta]} \frac{1}{2}\|U_\epsilon\|_{L_\epsilon^2}^2 + \int_\eta^{\zeta} \|\nabla_3 U_\epsilon\|_{L_\epsilon^2}^2 dt \Bigg] = \E\Bigg[\sup_{[0,T]} \frac{1}{2}\|\mathbbm{1}_{[\eta, \zeta]}(t) U_\epsilon\|_{L_\epsilon^2}^2 + \int_0^{T} \|\mathbbm{1}_{[\eta, \zeta]}(t) \nabla_3 U_\epsilon\|_{L_\epsilon^2}^2 dt \Bigg].
\end{align}
For simplicity, we write the estimates with $(\eta,\zeta) = (0,T)$, the proof being similar with a general couple of stopping times $(\eta,\zeta)$. For this purpose, we develop first the term $\frac{1}{2}\|U_\epsilon\|_{L_\epsilon^2}^2(t)$ as follows, for all $t>0$,
\begin{align}
    \frac{1}{2}\|U_\epsilon\|_{L_\epsilon^2}^2(t)
    &= \frac{1}{2}\|u_\epsilon\|_{L_\epsilon^2}^2(t) + \Big(\frac{1}{2}\|u\|_{L_\epsilon^2}^2(t) - (u_\epsilon,u)_{L_\epsilon^2}(t)\Big). \label{eq-med-equality}
\end{align}
In the following, we estimate the two terms on the RHS of equation \eqref{eq-med-equality}. The first term is estimated directly from the definition of global martingale Leray-Hopf weak solutions,
\begin{multline}
    \frac{1}{2}\|u_\epsilon\|_{L_\epsilon^2}^2(t) + \int_0^{t} \| \nabla_3 u_\epsilon\|_{L_\epsilon^2}^2 dt + \frac{1}{2} \int_0^{t} (a \nabla I_{\alpha_\sigma} u_\epsilon, \nabla I_{\alpha_\sigma} u_\epsilon)_{L_\epsilon^2} dt \\
    \leq \frac{1}{2}\|u_0\|_{L_\epsilon^2}^2 + \frac{1}{2} \int_0^{t} \sum_k \|\mathbf{P}_\epsilon (\phi_k \bcdot \nabla I_{\alpha_\sigma} u_\epsilon) + \Delta I_{ \alpha_\sigma} \phi_k \|^2_{L_\epsilon^2} dt\\
    + \int_0^{t} \big(\Delta I_{\alpha_\sigma} \sigma dW_t, u_\epsilon\big)_{L_\epsilon^2} - \int_0^{t}(\frac{\rho_\epsilon}{\rho_0} g, w_\epsilon)_{L^2} dt. \label{eq-first-term}
\end{multline}
It remains now to estimate the second term on the RHS of \eqref{eq-med-equality}. The proof is split into three steps. The first one is dedicated to the estimate of the second term of equation \eqref{eq-med-equality}. Then, we derive an evolution formula for the quantities $\|U_\epsilon\|_{L_\epsilon^2}^2$ and $\|\Theta_\epsilon\|_{L^2}^2$ in the second step. For readability, we divide it into two substeps, one for each quantity. Eventually, we give the final arguments for the proof of Theorem \ref{theorem-cvg-weak} in the third step.

\bigskip
\noindent
\emph{Step 1: Estimate of the second term on the RHS of \eqref{eq-med-equality}.}
Let $T \in \R_+^*$. In the following, we adopt the convention whereby, for all $T > 0$, for all function $u^\sharp \in L^2([0,T],V)$,
\begin{equation}
    \int_0^T \Big(u^\sharp, d_t u \Big)_{L_\epsilon^2} = \int_0^T \Big(u^\sharp, F_0 dt \Big)_{L_\epsilon^2} + \int_0^T \Big(u^\sharp, G_0 dW_t \Big)_{L_\epsilon^2}, \label{eq-abuse-of-notation}
\end{equation}
where, using the expression of the problem $(\mathcal{P}_\epsilon^{PE})$, we denote by $F_0 dt = (F_0^H \: F_0^z)^\tr dt$ and $G_0 dW_t = (G_0^H \: G_0^z)^\tr dW_t$, with
\begin{multline}
    F_0^H dt + G_0^H dW_t = \mathbf{P}\Big((u^* dt + \sigma dW_t) \bcdot \nabla_3\Big) v -  \mathbf{P} \Delta_3 (v dt + \sigma^H dW_t) \\
    - \mathbf{P} \nabla_H \int_z^1 \partial_z (p\:dt + dp_t^{\sigma}) dz' - \frac{1}{2}  \mathbf{P} \nabla_3 \bcdot(a \nabla_3 v) dt, \label{eq-abuse-of-notation-bis}
\end{multline}
and
\begin{equation}
    F_0^z dt + G_0^z dW_t = \int_z^1 \nabla_H \bcdot F_0^H dt \: dz' + \int_z^1 \nabla_H \bcdot G_0^H dW_t\: dz'.
\end{equation}
It can be checked that all the terms in equations \eqref{eq-abuse-of-notation} and \eqref{eq-abuse-of-notation-bis} make sense. We draw attention to the following abuse of notation,
\begin{equation}
    \Big(u^\sharp, d_t u \Big)_{L_\epsilon^2} = \Big(v^\sharp, d_t v \Big)_{L^2} + \epsilon^2 \Big(w^\sharp, d_t w \Big)_{L^2},
\end{equation}
where the last term reads $\epsilon^2 \Big(w^\sharp, d_t w \Big)_{L^2} \equiv \epsilon^2 \langle F_0^z dt + G_0^z dW_t, w^\sharp \rangle_{(H_{0,z}^1(\mathcal{S}, \R))' \times H_{0,z}^1(\mathcal{S},\R)}$, with $H_{0,z}^1(\mathcal{S}, \R)$ being the space of functions of regularity $H^1(\mathcal{S}, \R)$ that cancel on the upper and bottom boundaries $\Gamma_u$ and $\Gamma_b$. We emphasize that the abuse of notation introduced in equation \eqref{eq-abuse-of-notation} is only a convenient formalism to shorten the expressions of our formulas. Moreover, it is only present in the steps 1 and 2.1 of the current proof.

Denote by $\mathcal{H}_{3D} = \{u=(v \: w)^\tr| v \in H, w = \int_z^1 \nabla_H \bcdot v\}$. By classical arguments, $H$ admits a Hilbert basis in $C^\infty(\mathcal{S},\R^2)$, and then $\mathcal{H}_{3D}$ admits a basis of regularity $C^\infty(\mathcal{S},\R^3)$ by remarking that the condition $w = \int_z^1 \nabla_H \bcdot v$ is closed. Thus, choosing any such Hilbert basis $(e_n) \in C^\infty(\mathcal{S},\R^3)$ of $\mathcal{H}_{3D}$, we denote by $u_n := \mathbf{P}_n u$ the projection of $u$ onto $Span(e_i, 0\leq i\leq n)$, so that $u_n$ is a semi-martingale and $u_n \in C([0,T], C^\infty(\mathcal{S},\R^2))$ a.s. Moreover, $d_t u_n = \mathbf{P}_n[F_0 dt + G_0 dW_t]$ and $(u_n)\vert_{t=0} = \mathbf{P}_n u_0$. In addition, we denote by $u_n = (v_n, w_n)^\tr$, where $w_n = \int_z^1 \nabla_H \bcdot v_n dz'$ by the divergence-free condition. Thus, we can extend the aforementioned abuse of notation to the projection $u_n$ of $u$. Applying It\={o}'s lemma to $(u_\epsilon, u_n)_{L_\epsilon^2}$ -- that is taking $u^\sharp = u_\epsilon$ -- we infer that
\begin{multline}
    - \int_0^T \Big(u_\epsilon, d_t u_n \Big)_{L_\epsilon^2} + \int_0^T \Bigg((u_\epsilon^* dt \bcdot \nabla_3) u_\epsilon + (\sigma dW_t \bcdot \nabla_3) I_{\alpha_\sigma} u_\epsilon,  u_n \Bigg)_{L_\epsilon^2}\\
    - \frac{1}{2} \int_0^T \big(\nabla_3 \bcdot(a \nabla_3 I_{\alpha_\sigma^2} u_\epsilon), u_n \big)_{L_\epsilon^2} - \int_0^T \big(\Delta (u_\epsilon dt + I_{\alpha_\sigma} \sigma dW_t),  u_n \big)_{L_\epsilon^2}  = \|u_0\|_{L_\epsilon^2(\mathcal{S})}^2 - \big(u_\epsilon, u_n \big)_{L_\epsilon^2}(T) \\
    + \int_0^T \Bigg[  \sum_k  (\mathbf{P}(\phi_k \bcdot \nabla v_n) + \Delta \phi_k^H + \nabla_H \tilde{\pi}_k, \mathbf{P}_\epsilon^H(\phi_k \bcdot \nabla I_{\alpha_\sigma} u_\epsilon) + \Delta \phi_k^{H})_{L^2} \\
    + \epsilon^2 \Big( \int_z^1 \nabla_H \bcdot \mathcal{R}(\phi_k \bcdot \nabla v_n) + \Delta \phi_k^z + \int_z^1 \Delta_H \tilde{\pi}_k , \mathbf{P}_\epsilon^z(\phi_k \bcdot \nabla I_{\alpha_\sigma} u_\epsilon) + \alpha_\sigma \Delta \phi_k^{z}\Big)_{L^2} - (\frac{\rho_\epsilon}{\rho_0} g,  w_n)_{L^2}\Bigg] dt, \label{eq-final1.1-withSigma}
\end{multline}
remarking that the bounded variation pressure terms cancel since $u_n$ is divergence-free. The use of It\={o}'s lemma is permitted here since $u_n \in Span(e_i, 0\leq i\leq n)$, so that the inner products of the form $(\bcdot, u_n)_{L_\epsilon^2}$ can be regarded as an inner products over \emph{finite dimension} spaces. To derive a relation involving the term $\int_0^T \big(u_\epsilon, d_t u \big)_{L_\epsilon^2}$, we want to pass to limit when $n \rightarrow \infty$. Since the process $v$ is in $L^2(\Omega, C([0,T], V) \cap L^2([0,T], \mathcal{D}(A)))$, we only need to prove that, a.s.,
$$\int_0^T \Big(u_\epsilon, d_t u_n \Big)_{L_\epsilon^2} \rightarrow \int_0^T \Big(u_\epsilon, d_t u \Big)_{L_\epsilon^2}$$
the other terms being meaningful by adapting the arguments of \cite{LT2019}. Let us write the evolution equation of $u_n$ as $d_t u_n = \mathbf{P}_n F_0 dt + \mathbf{P}_n G_0 dW_t$, so that, using the aforementioned notation,
\begin{equation}
    \int_0^T \Big(u_\epsilon, d_t u_n \Big)_{L_\epsilon^2} = \int_0^T \Big(u_\epsilon, \mathbf{P}_n F_0 dt + \mathbf{P}_n G_0 dW_t \Big)_{L_\epsilon^2}. \label{eq-integral-decomposition}
\end{equation}
By standard arguments, thanks to the regularity of the primitive equation solution $u$ -- namely we have that $u \in C([0,T],V) \cap L^2([0,T],\mathcal{D}(A))$ a.s. -- we infer that, a.s.,
\begin{equation}
    \int_0^T \Big(u_\epsilon, \mathbf{P}_n F_0 dt \Big)_{L_\epsilon^2} \rightarrow \int_0^T \Big(u_\epsilon, F_0 dt \Big)_{L_\epsilon^2}.
\end{equation}
Additionally, by the Burkholder-Davis-Gundy inequality, denoting by $\mathcal{U} := H^{-s}(\mathcal{S},\R^3)$ with $s > \frac{3}{2}$,
\begin{multline}
    \E \Bigg[ \sup_{t\in[0,T]} \bigg| \int_{0}^t \Big(u_\epsilon, (\mathbf{P}_n G_0 - G_0) dW_s \Big)_{L_\epsilon^2} \bigg|^2\Bigg] \leq C \E \int_{0}^T \|(u_\epsilon, (\mathbf{P}_n G_0 - G_0) [\bcdot])_{L_\epsilon^2}\|_{\mathcal{L}_2(\mathcal{U}, \R) }^2 ds\\
    \leq C \Big(\int_{0}^T \|(\mathbf{P}_n G_0 - G_0) [\bcdot]\|_{\mathcal{L}_2(\mathcal{U}, L^2(\mathcal{S},\R^3)) }^2 ds\Big) \E \sup_{[0,T]} \|u_\epsilon\|_{L_\epsilon^2}^2,
\end{multline}
which shows that the LHS converges in quadratic mean, hence a.s. by thinning the sequence. Consequently, by taking the limit of a suitable subsequence of $(u_n)$ in equation \eqref{eq-integral-decomposition}, we infer
\begin{equation}
    \int_0^T \Big(u_\epsilon, d_t u_n \Big)_{L_\epsilon^2} \rightarrow \int_0^T  \Big(u_\epsilon, F_0 dt + G_0 dW_t \Big)_{L_\epsilon^2} = \int_0^T \Big(u_\epsilon,  d_t u \Big)_{L_\epsilon^2}.
\end{equation}
Thus, taking the limit of such a subsequence in equation \eqref{eq-final1.1-withSigma} yields,
\begin{multline}
    - \int_0^T \Big(u_\epsilon, d_t u \Big)_{L_\epsilon^2} + \int_0^T \Bigg((u_\epsilon^* dt \bcdot \nabla_3) u_\epsilon + (\sigma dW_t \bcdot \nabla_3) I_{\alpha_\sigma} u_\epsilon,  u \Bigg)_{L_\epsilon^2}\\
    - \frac{1}{2} \int_0^T \big(\nabla_3 \bcdot(a \nabla_3 I_{\alpha_\sigma^2} u_\epsilon), u \big)_{L_\epsilon^2} - \int_0^T \big(\Delta (u_\epsilon dt + I_{\alpha_\sigma} \sigma dW_t),  u \big)_{L_\epsilon^2}  = \|u_0\|_{L_\epsilon^2(\mathcal{S})}^2 \\
    + \int_0^T \Bigg[  \sum_k  (\mathbf{P}(\phi_k \bcdot \nabla v) + \Delta \phi_k^H + \nabla_H \tilde{\pi}_k, \mathbf{P}_\epsilon^H(\phi_k \bcdot \nabla I_{\alpha_\sigma} u_\epsilon) + \Delta \phi_k^{H})_{L^2} \\
    + \epsilon^2 \Big( \int_z^1 \nabla_H \bcdot \mathcal{R}(\phi_k \bcdot \nabla v) + \Delta \phi_k^z + \int_z^1 \Delta_H \tilde{\pi}_k , \mathbf{P}_\epsilon^z(\phi_k \bcdot \nabla I_{\alpha_\sigma} u_\epsilon) + \alpha_\sigma \Delta \phi_k^{z}\Big)_{L^2} - (\frac{\rho_\epsilon}{\rho_0} g,  w)_{L^2}\Bigg] dt. \label{eq-almost-final-step1}
\end{multline}
Also, using a similar approximation argument as above and applying It\={o}'s lemma, we infer
\begin{align}
     \int_0^{T} \big( u_\epsilon, d_t &u \big)_{L_\epsilon^2} dt = \int_0^{T} ( d_t v, v_\epsilon )_{L^2} + \epsilon^2 \langle d_t w, w_\epsilon \rangle_{(H_{z0}^1)' \times H_{z0}^1} dt \nonumber\\
     &= \int_0^{T} ( d_t v, V_\epsilon )_{L^2} + ( d_t v, v )_{L^2} + \epsilon^2 \langle d_t w, W_\epsilon \rangle_{(H_{z0}^1)' \times H_{z0}^1} + \epsilon^2 \langle d_t w, w \rangle_{(H_{z0}^1)' \times H_{z0}^1} dt \nonumber\\
     &= \int_0^{T} \Big( d_t v, V_\epsilon + \epsilon^2 \int_z^1 \nabla_H W_\epsilon \Big)_{L^2} + \frac{1}{2} \int_0^{T} d_t \Big(\|v\|_{L^2}^2 + \epsilon^2 \|w\|_{L^2}^2\Big) \nonumber\\
     &\qquad \quad - \frac{1}{2} \int_0^{T} \sum_k \|\mathbf{P} (\phi_k \bcdot \nabla v) + \Delta \phi_k^H + \nabla_H \tilde{\pi}_k \|^2_{L^2} dt \nonumber \\
     &\qquad \quad - \frac{\epsilon^2}{2} \int_0^{T} \sum_k \| \int_z^1 \nabla_H \bcdot \mathcal{R} (\phi_k \bcdot \nabla v) +\Delta \phi_k^z + \int_z^1 \Delta_H \tilde{\pi}_k \|^2_{L^2} dt. \label{eq-develop-innerProd-step1}
\end{align}
Gathering equations \eqref{eq-almost-final-step1} and  \eqref{eq-develop-innerProd-step1} eventually yields,
\begin{multline}
    \big( u_\epsilon, u \big)_{L_\epsilon^2} (T) - \frac{1}{2}\|u\|_{L_\epsilon^2}^2(T) + \int_0^{T} \Bigg((u_\epsilon^* dt \bcdot \nabla_3) u_\epsilon + (\sigma dW_t \bcdot \nabla_3) I_{\alpha_\sigma} u_\epsilon, u \Bigg)_{L_\epsilon^2} \\
    - \int_0^{T} \big(\Delta (u_\epsilon dt + I_{\alpha_\sigma} \sigma dW_t), u\big)_{L_\epsilon^2} - \int_0^{T} \frac{1}{2} \big(\nabla_3 \bcdot(a \nabla_3 I_{\alpha_\sigma^2} u_\epsilon), u\big)_{L_\epsilon^2(Q_{T})} dt = \frac{1}{2}\|u_0\|_{L_\epsilon^2}^2 \\
    + \int_0^{T} \Big( d_t v, V_\epsilon + \epsilon^2 \int_z^1 \nabla_H W_\epsilon \Big)_{L^2} + \int_0^{T} \Bigg[ - \frac{1}{2} \sum_k \|\mathbf{P}(\phi_k \bcdot \nabla v) + \Delta \phi_k^H + \nabla_H \tilde{\pi}_k \|^2_{L^2}\\
    - \frac{\epsilon^2}{2} \sum_k \| \int_z^1 \nabla_H \bcdot \mathcal{R}(\phi_k \bcdot \nabla v) + \Delta \phi_k^z\|^2_{L^2}
    + \sum_k  (\mathbf{P}(\phi_k \bcdot \nabla v) + \Delta \phi_k^H + \nabla_H \tilde{\pi}_k , \mathbf{P}_\epsilon^H(\phi_k \bcdot \nabla I_{\alpha_\sigma} u_\epsilon) + \Delta \phi_k^{H})_{L^2}\\
    + \epsilon^2(\int_z^1 \nabla_H \bcdot \mathcal{R}(\phi_k \bcdot \nabla v) + \Delta \phi_k^z + \int_z^1 \Delta_H \tilde{\pi}_k , \mathbf{P}_\epsilon^z(\phi_k \bcdot \nabla I_{\alpha_\sigma} u_\epsilon) + \alpha_\sigma \Delta \phi_k^{z})_{L^2} - (\frac{\rho_\epsilon}{\rho_0} g, w)_{L^2} \Bigg]dt. \label{eq-second-term}
\end{multline}
\emph{Step 2.1: Partial estimate on $U_\epsilon$.} By substracting equation \eqref{eq-first-term} to \eqref{eq-second-term}, and using equation \eqref{eq-med-equality}, we find
\begin{multline}
    -\frac{1}{2}\|U_\epsilon\|_{L_\epsilon^2}^2(T) - \int_0^{T} \|U_\epsilon\|_{L_\epsilon^2}^2 dt+ \int_0^{T} \Bigg((u_\epsilon^* dt \bcdot \nabla_3) u_\epsilon + (\sigma dW_t \bcdot \nabla_3) I_{\alpha_\sigma} u_\epsilon, u \Bigg)_{L_\epsilon^2} \\
    - \frac{1}{2} \big( a \nabla_3 I_{\alpha_\sigma} u_\epsilon, \nabla_3 I_{\alpha_\sigma} U_\epsilon \big)_{L_\epsilon^2(Q_{T})} + \big(\Delta u, U_\epsilon \big)_{L_\epsilon^2(Q_{T})} \geq \int_0^{T} ( d_t v, V_\epsilon + \epsilon^2 \int_z^1 \nabla_H W_\epsilon \Big)_{L^2} dt\\
    - \frac{1}{2} \int_0^{T} \sum_k \|\mathbf{P}(\phi_k \bcdot \nabla v) + \Delta \phi_k^H + \nabla_H \tilde{\pi}_k \|^2_{L^2} + \|\mathbf{P}_\epsilon^H (\phi_k \bcdot \nabla I_{\alpha_\sigma} u_\epsilon) + \Delta \phi_k^{H}\|^2_{L^2} dt \\
    - \frac{\epsilon^2}{2} \int_0^{T} \sum_k \|\int_z^1 \nabla_H \bcdot \mathcal{R}(\phi_k \bcdot \nabla v) + \Delta \phi_k^z + \int_z^1 \Delta_H \tilde{\pi}_k \|^2_{L^2} + \|\mathbf{P}_\epsilon^z (\phi_k \bcdot \nabla I_{\alpha_\sigma} u_\epsilon) + \alpha_\sigma \Delta \phi_k^{z}\|^2_{L^2} dt\\
    + \int_0^{T} \Bigg[\sum_k (\mathbf{P}(\phi_k \bcdot \nabla v) + \Delta \phi_k^H + \nabla_H \tilde{\pi}_k , \mathbf{P}_\epsilon^H(\phi_k \bcdot \nabla I_{\alpha_\sigma} u_\epsilon) + \Delta \phi_k^{H})_{L^2}\\
    + \epsilon^2(\int_z^1 \nabla_H \bcdot \mathcal{R} (\phi_k \bcdot \nabla v) + \Delta \phi_k^z + \int_z^1 \Delta_H \tilde{\pi}_k , \mathbf{P}_\epsilon^z(\phi_k \bcdot \nabla I_{\alpha_\sigma} u_\epsilon) + \alpha_\sigma \Delta \phi_k^{z})_{L^2}\Bigg]dt\\
    - \int_0^{T} \big(\Delta I_{\alpha_\sigma} \sigma dW_t, U_\epsilon\big)_{L_\epsilon^2} + \int_0^{T} (\frac{\rho_\epsilon}{\rho_0} g, W_\epsilon)_{L^2} dt. \label{eq:big-eq-step2.1}
\end{multline}
In addition, by taking the $(\bcdot, \bcdot)_{L_\epsilon^2}$ inner product of equation \eqref{eq:abstract-PE-v} with $V_\epsilon$, we find that
\begin{multline}
    (d_t v, V_\epsilon)_{L^2} + (-\Delta v, V_\epsilon)_{L^2} dt + (-\Delta \sigma dW_t^H, V_\epsilon)_{L^2} - \frac{1}{2} (\nabla \bcdot a \nabla v, V_\epsilon)_{L^2} dt \\
    + (\nabla_H (\tilde{p} dt  + d\tilde{p}_t^\sigma  ),V_\epsilon)_{L^2}  = -((u^* dt + \sigma dW_t) \bcdot \nabla v, V_\epsilon)_{L^2} dt. \label{eq:dtv-Veps}
\end{multline}
Combining equations \eqref{eq:big-eq-step2.1} and \eqref{eq:dtv-Veps}, we infer the following relation,
\begin{multline}
    \frac{1}{2}\|U_\epsilon\|_{L_\epsilon^2}^2(T) + \int_0^{T} \|\nabla_3 U_\epsilon\|_{L_\epsilon^2}^2 dt + \frac{1}{2} \int_0^{T} \big( a \nabla_3 I_{\alpha_\sigma} U_\epsilon, \nabla_3 I_{\alpha_\sigma} U_\epsilon \big)_{L_\epsilon^2} dt \\
    \leq - \epsilon^2 \int_0^T \Big( d_t v, \int_z^1 \nabla_H W_\epsilon \Big)_{L^2} + \frac{1}{2} \int_0^{T} \sum_k \|\mathbf{P}_\epsilon^H (\phi_k \bcdot \nabla I_{\alpha_\sigma} u_\epsilon) - \mathbf{P}(\phi_k \bcdot \nabla v)  - \nabla_H \tilde{\pi}_k \|^2_{L^2} \\
    + \frac{\epsilon^2}{2} \int_0^{T} \sum_k \|\mathbf{P}_\epsilon^z (\phi_k \bcdot \nabla I_{\alpha_\sigma} u_\epsilon) - \int_z^1 \nabla_H \bcdot \mathcal{R} (\phi_k \bcdot \nabla v)  -  \int_z^1 \Delta_H \tilde{\pi}_k\|^2_{L^2}\\
    - \alpha_\sigma^2 \epsilon^2 \frac{1}{2} \Bigg(a \nabla_3 w , \nabla_3 W_\epsilon \Bigg)_{L^2(Q_{T})} + \epsilon^2 \Bigg(\Delta w , W_\epsilon \Bigg)_{L^2(Q_{T})} + \int_0^{T} \Bigg[\alpha_\sigma \epsilon^2 \big(\Delta \sigma dW_t^z, W_\epsilon \big)_{L^2}\\
    + \epsilon^2 \Bigg(\Big((u_\epsilon^* dt + \alpha_\sigma \sigma dW_t) \bcdot \nabla_3\Big) w_\epsilon, w \Bigg)_{L^2} + \Bigg(\Big((u_\epsilon^* dt + \sigma dW_t) \bcdot \nabla_3 \Big) v_\epsilon,v \Bigg)_{L^2} \\
    + \Bigg((u^* dt + \sigma dW_t) \bcdot \nabla v, V_\epsilon \Bigg)_{L^2} - (\partial_z (\tilde{p} dt  + d\tilde{p}_t^\sigma ), W_\epsilon)_{L^2} + (\frac{\rho_\epsilon}{\rho_0} g, W_\epsilon)_{L^2}\Bigg]. \label{eq-bigInequation-beforeEstimates}
\end{multline}
Moreover, we may simplify the following term,
\begin{align*}
    \Bigg(&\Big((u_\epsilon^* dt + \sigma dW_t) \bcdot \nabla_3 \Big) v_\epsilon,v \Bigg)_{L^2} + \Bigg((u^* dt + \sigma dW_t) \bcdot \nabla v, V_\epsilon \Bigg)_{L^2}\\
    &= \Bigg(\Big((u_\epsilon^* dt + \sigma dW_t) \bcdot \nabla_3 \Big) v_\epsilon,v \Bigg)_{L^2} - \Bigg((u^* dt + \sigma dW_t) \bcdot \nabla v_\epsilon, v \Bigg)_{L^2}\\
    &= \Big((U_\epsilon \bcdot \nabla_3) v_\epsilon,v \Big)_{L^2} dt = \Big((U_\epsilon \bcdot \nabla_3) V_\epsilon,v \Big)_{L^2} dt.
\end{align*}
Hence, equation \eqref{eq-bigInequation-beforeEstimates} can be rewritten as,
\begin{multline}
    \frac{1}{2}\|U_\epsilon\|_{L_\epsilon^2}^2(T) + \int_0^{T} \|\nabla_3 U_\epsilon\|_{L_\epsilon^2}^2 dt + \frac{1}{2} \int_0^{T} \big( a \nabla_3 I_{\alpha_\sigma} U_\epsilon, \nabla_3 I_{\alpha_\sigma} U_\epsilon \big)_{L_\epsilon^2} dt \\
    \leq \int_0^T \Bigg(\Big(U_\epsilon \bcdot \nabla_3 \Big) V_\epsilon,v \Bigg)_{L^2} dt + \epsilon^2 \int_0^T \Bigg(\Big(u_\epsilon^* \bcdot \nabla_3\Big) w_\epsilon, w \Bigg)_{L^2} dt\\
    + \int_0^T  -\alpha_\sigma^2 \epsilon^2 \frac{1}{2} (a \nabla_3 w , \nabla_3 W_\epsilon )_{L^2} + (-\partial_z \tilde{p} + \frac{\rho_\epsilon}{\rho} g + \epsilon^2 \Delta w, W_\epsilon)_{L^2}  dt 
    \\%--------------------------------------
    - \epsilon^2 \int_0^T \Big( d_t v, \int_z^1 \nabla_H W_\epsilon \Big)_{L^2} + \frac{1}{2} \int_0^{T} \sum_k \|\mathbf{P}_\epsilon^H (\phi_k \bcdot \nabla I_{\alpha_\sigma} u_\epsilon) - \mathbf{P}(\phi_k \bcdot \nabla v)  - \nabla_H \tilde{\pi}_k \|^2_{L^2} \\
    + \frac{\epsilon^2}{2} \int_0^{T} \sum_k \|\mathbf{P}_\epsilon^z (\phi_k \bcdot \nabla I_{\alpha_\sigma} u_\epsilon) - \int_z^1 \nabla_H \bcdot \mathcal{R} (\phi_k \bcdot \nabla v)  -  \int_z^1 \Delta_H \tilde{\pi}_k\|^2_{L^2}\\
    + \sum_k \int_0^{T} \Big[\alpha_\sigma\epsilon^2 \Big(( \phi_k \bcdot \nabla_3) W_\epsilon, w \Big)_{L^2} + \alpha_\sigma \epsilon^2 \big(\Delta \phi_k^z, W_\epsilon \big)_{L^2} - (\partial_z \pi_k, W_\epsilon)_{L^2} \Big] d\beta^k\\
    := J_1 + J_2 + J_3 + J_4 + J_5 + J_6 + \sum_k J_7^k d\beta^k. \label{eq-bigInequation-reordered}
\end{multline}
In addition, using the same arguments as in \cite{LT2019},
\begin{align*}
    J_1 = &\int_0^{T} \Bigg(\Big(U_\epsilon \bcdot \nabla_3 \Big) V_\epsilon,v \Bigg)_{L^2} dt \leq \int_0^{T}  |((V_\epsilon \bcdot \nabla_H) V_\epsilon, v)_{L^2}| + |(W_\epsilon \partial_z V_\epsilon, v)_{L^2}| dt\\
    &\leq \int_0^{T}  |((V_\epsilon \bcdot \nabla_H) V_\epsilon, v)_{L^2}| + |((\nabla_H \bcdot V_\epsilon) V_\epsilon, v)_{L^2}| + |(W_\epsilon V_\epsilon, \partial_z v)_{L^2}| dt\\
    &\leq C \int_0^{T} \|v\|_{L^6} (\|V_\epsilon\|_{L^3} \|\nabla V_\epsilon\|_{L^2} + \|V_\epsilon\|_{L^3} \|\nabla V_\epsilon\|_{L^2}) + \|V_\epsilon\|_V^{3/2} \|V_\epsilon\|_H^{1/2} \|v\|_V^{1/2} \|v\|_{\mathcal{D}(A)}^{1/2} dt\\
    &\leq \int_0^{T} C_\xi (1 + \|v\|_{V}^2 \|v\|_{\mathcal{D}(A)}^2) (\|V_\epsilon\|_H^2 + \epsilon^2 \|W_\epsilon\|_{L^2}^2) + \xi \|V_\epsilon\|_V^2  dt
\end{align*}
and,
\begin{align*}
    J_2 &= \epsilon^2 \int_0^{T} \Big(\big(u_\epsilon^* \bcdot \nabla_3\big) w_\epsilon, w \Big)_{L^2} dt = \epsilon^2 \int_0^{T} \int_\mathcal{S} \Big(\big(v_\epsilon^* \bcdot \nabla_H\big) W_\epsilon - w_\epsilon^* (\nabla_H \bcdot V_\epsilon) \Big) w  dt \\
    &\leq C_\xi \epsilon^2 \int_0^{T} (\|v_\epsilon^*\|_H^{1/2} \|v_\epsilon^*\|_V^{1/2} \|\nabla_3 W_\epsilon\|_{L^2} + \|w_\epsilon^*\|_H^{1/2} \|w_\epsilon^*\|_V^{1/2} \|\nabla_3 V_\epsilon\|_{L^2}) \|v\|_V^{1/2} \|v\|_{\mathcal{D}(A)}^{1/2} dt\\
    &\leq C_\xi \epsilon^2 \int_0^{T} (\|v_\epsilon^*\|_H^{2} \|v_\epsilon^*\|_V^{2}  + \epsilon^4 \|w_\epsilon^*\|^{2}_{L^2} \|w_\epsilon^*\|^{2}_V + \|v\|_V^{2} \|v\|_{\mathcal{D}(A)}^{2}) dt + \xi \int_0^T \|V_\epsilon\|_V^2 + \epsilon^2 \|\nabla_3 W_\epsilon\|_{L^2}^2 dt\\
    &\leq C_\xi \epsilon^2 \int_0^{T} (1+\|v_\epsilon\|_H^{2} \|v_\epsilon\|_V^{2}  + \epsilon^4 \|w_\epsilon\|^{2}_{L^2} \|w_\epsilon\|^{2}_V + \|v\|_V^{2} \|v\|_{\mathcal{D}(A)}^{2}) dt + \xi \int_0^T \|V_\epsilon\|_V^2 + \epsilon^2 \|\nabla_3 W_\epsilon\|_{L^2}^2 dt.
\end{align*}
Moreover,
\begin{align}
    J_3 &\leq |\int_0^T (\partial_z \tilde{p} +  \alpha_\sigma^2 \epsilon^2 \frac{1}{2} \nabla_3 \bcdot (a \nabla_3 w)  + \frac{\rho_\epsilon}{\rho_0} g, W_\epsilon)_{L^2} dt| \nonumber\\
    &\leq C \int_0^{T} \|\Theta_\epsilon\|_{L^2} \|W_\epsilon\|_{L^2} +  \alpha_\sigma^2 \epsilon^2 \frac{1}{2} |(\nabla_3 \bcdot ((a - a^K) \nabla_3 w), W_\epsilon)_{L^2}|  dt \nonumber\\
    &\leq C_\xi \int_0^{T} (\|\Theta_\epsilon\|_{L^2}^2  + \alpha_\sigma^4 \epsilon^2 \|(a - a^K) \nabla w\|_{L^2}^2 ) dt + \xi \int_0^{T} (\|V_\epsilon\|_{V}^2 + \epsilon^2 \|W_\epsilon\|_{H^1}^2) dt.
\end{align}
Eventually, we estimate the following term,
\begin{multline}
    J_4 = \epsilon^2 \Big|\int_0^T \Big( d_t v, \int_z^1 \nabla_H W_\epsilon \Big)_{L^2}\Big| = \epsilon^2 \Big|\int_0^T \Big( \mathbf{P}\Big((u^* dt + \sigma dW_t) \bcdot \nabla_3\Big) v -  \mathbf{P} \Delta_3 (v dt + \sigma^H dW_t)  \\
    -  \mathbf{P} \nabla_H \int_z^1 \partial_z (p\:dt + dp_t^{\sigma}) dz' - \frac{1}{2}  \mathbf{P} \nabla_3 \bcdot(a \nabla_3 v) dt, \int_z^1 \nabla_H W_\epsilon \Big)_{L^2}\Big|\\
    \leq C \epsilon^2 \int_0^T [(1 + \|a\|_{H^3}) \|v\|_{\mathcal{D}(A)} + \|\theta\|_{H^1}] \|W_\epsilon\|_{H^1} dt \\
    + C \epsilon^2 \int_0^T \Big|\Big( (u^* \bcdot \nabla_3) v, \int_z^1 \nabla_H W_\epsilon \Big)_{L^2} \Big| dt.
\end{multline}
The last term on the RHS can be expressed as
\begin{multline}
    \int_0^T \Big|\Big( (u^* \bcdot \nabla_3) v, \int_z^1 \nabla_H W_\epsilon \Big)_{L^2} \Big| dt = \int_0^T \Big|\Big( (v^* \bcdot \nabla_H) v, \int_z^1 \nabla_H W_\epsilon \Big)_{L^2} \Big| dt\\
    \qquad \qquad \qquad \qquad \qquad + \int_0^T \Big|\Big( w^* \partial_z v, \int_z^1 \nabla_H W_\epsilon \Big)_{L^2} \Big| dt \\
    \leq C \int_0^T [\|v^*\|_{L^3} \|v\|_{\mathcal{D}(A)} + \|v\|_{L^6} (\|w^*\|_{L^3} + \|\partial_z w^*\|_{L^3})] \|\nabla_H W_\epsilon\|_{L^2} dt\\
    \leq C \int_0^T [(1+\|v^*\|_{V}) \|v\|_{\mathcal{D}(A)} + \|v\|_{V} \|v^*\|_{V}] \|W_\epsilon\|_{H^1} dt\\
    \leq C_\xi \int_0^T [1+\|v\|_{V}^2\|v\|_{\mathcal{D}(A)}^2  + \|v\|_{V}^4] dt + \xi \int_0^T \|W_\epsilon\|_{H^1}^2 dt
\end{multline}
Therefore, gathering the previous estimates into equation \eqref{eq-bigInequation-reordered} and taking the supremum then the expectation,
\begin{multline}
    \E\Bigg[\sup_{[0,T]} \frac{1}{2}\|U_\epsilon\|_{L_\epsilon^2}^2 + \int_0^{T} \|\nabla_3 U_\epsilon\|_{L_\epsilon^2}^2 dt \Bigg] \leq C \epsilon^2  \E\Bigg[\int_0^{T} \| v \|_{H^1}^4 + \| v\|_{H^2}^2 (1 + \|a\|_{L^\infty}^2 + \|v\|_{H^2}^{2}) dt \Bigg] \\
    + C \E\Bigg[ \int_0^{T} (\|v\|_{H^1}^{2} \|v\|_{H^2}^2 +1) (\|U_\epsilon\|_{L_\epsilon^2}^2 + \|\Theta_\epsilon\|_{L^2}^2) + \alpha_\sigma^4 \epsilon^2 \|(a - a^K) \nabla w\|_{L^2}^2 dt\Bigg] \\
    - \E\Big[\frac{1}{2} \int_0^{T} \big( a \nabla_3 I_{\alpha_\sigma} U_\epsilon, \nabla_3 I_{\alpha_\sigma} U_\epsilon \big)_{L_\epsilon^2} dt\Big] + \E\Big[J_5 + J_6 + \sup_{[0,T]} |\int_0^{\bcdot} \sum_k J_7^k d\beta^k|\Big]. \label{eq-U-eps-estimate}
\end{multline}
In the next substep, we establish an estimate on $\Theta_\epsilon$. The remaining term $\E\Big[J_5 + J_6 + \sup_{[0,T]} |\int_0^{\bcdot} \sum_k J_7^k d\beta^k|\Big]$, which is more challenging to estimate, will be treated in Step 3.

\bigskip

\noindent
\emph{Step 2.2: Estimate on $\Theta_\epsilon$.} Furthermore, applying It\={o}'s lemma to $\|\Theta_\epsilon\|_{L^2}^2$ and using again an approximation argument, we find
\begin{multline}
    \frac{1}{2} d_t \|\Theta_\epsilon\|_{L^2}^2 + \|\Theta_{\epsilon}\|_{H^1}^2 dt + \frac{1}{2} (a \nabla_3 \Theta_\epsilon, \nabla_3 \Theta_\epsilon)_{L^2} dt\\
    = \frac{1}{2} \sum_k \|\phi_k \bcdot \nabla_3 \Theta_\epsilon \|_{L^2}^2 dt - (U_\epsilon \bcdot \nabla \theta, \Theta_\epsilon)_{L^2} dt,
\end{multline}
since $u^*,u_\epsilon^*$ and $\phi_k$ are divergence-free. Moreover,
\begin{align}
    (U_\epsilon \bcdot \nabla \theta, \Theta_\epsilon)_{L^2} &= ((\nabla_H \bcdot V_\epsilon) \theta, \Theta_\epsilon)_{L^2} + (V_\epsilon \bcdot \nabla_H \Theta_\epsilon, \theta)_{L^2} + (W_\epsilon \partial_z \theta, \Theta_\epsilon)_{L^2}\\
    &\leq \|V_\epsilon\|_V \|\theta\|_{H^2} \|\Theta_\epsilon\|_{L^2} + \|V_\epsilon\|_H \|\theta\|_{H^2} \|\Theta_\epsilon\|_{H^1} +  \|V_\epsilon\|_V \|\Theta_\epsilon\|_{L^3} \|\partial_z \theta\|_{L^3} \nonumber\\
    &\leq C \|\theta\|_{H^2}^2(1+\|\theta\|_{H^1}^2) (\|\Theta_\epsilon\|_{L^2}^2 + \|V_\epsilon\|_H^2 ) + \xi (\|V_\epsilon\|_V^2 + \|\Theta_\epsilon\|_{H^1}^2). \nonumber
\end{align}
Therefore,
\begin{multline}
    \E\Big[ \frac{1}{2} \sup_{[0,T]} \|\Theta_\epsilon\|_{L^2}^2 + \int_0^{T} \|\Theta_{\epsilon}\|_{H^1}^2 dt\Big] \leq C \E\Big[\int_0^{T} \|\theta\|_{H^2}^2(1+\|\theta\|_{H^1}^2) (\|\Theta_\epsilon\|_{L^2}^2 + \|V_\epsilon\|_H^2 ) dt \Big] \\
    + \xi \E\Big[ \int_0^{T} \Big(\sum_k \|\phi_k\|_{H^2}^2\Big) \|\nabla_3 \Theta_\epsilon \|_{L^2}^2 + (\|V_\epsilon\|_V^2 + \|\Theta_\epsilon\|_{H^1}^2) dt \Big],
\end{multline}
and thus,
\begin{equation}
    \E\Big[ \frac{1}{2} \sup_{[0,T]} \|\Theta_\epsilon\|_{L^2}^2 + \int_0^{T} \|\Theta_{\epsilon}\|_{H^1}^2 dt\Big] \leq C \E\Big[\int_0^{T} \|\theta\|_{H^2}^2(1+\|\theta\|_{H^1}^2) (\|\Theta_\epsilon\|_{L^2}^2 + \|V_\epsilon\|_H^2 )dt \Big]. \label{eq-Theta-eps-estimate}
\end{equation}
\emph{Step 3: Proof of Theorem \ref{theorem-cvg-weak}.} Now we combine the previous estimates on $U_\epsilon$ and $\Theta_\epsilon$: by summing equations \eqref{eq-U-eps-estimate} and \eqref{eq-Theta-eps-estimate}, we infer
\begin{multline}
    \E\Bigg[\sup_{[0,T]} \frac{1}{2}\|U_\epsilon\|_{L_\epsilon^2}^2 + \int_0^{T} \|\nabla_3 U_\epsilon\|_{L_\epsilon^2}^2 dt \Bigg] +  \E\Big[ \frac{1}{2} \sup_{[0,T]} \|\Theta_\epsilon\|_{L^2}^2 + \int_0^{T} \|\Theta_{\epsilon}\|_{H^1}^2 dt\Big] \\
    \leq C \epsilon^2  \E\Bigg[\int_0^{T} \|v \|_{H^1}^4 + \| v\|_{H^2}^2 (1 + \|a\|_{L^\infty}^2) dt + \|v\|_{H^1}^{2} \|v\|_{H^2}^{2} + \alpha_\sigma^4 \|(a - a^K) \nabla w\|_{L^2}^2 dt \Bigg]  \\
    + C \E\Bigg[ \int_0^{T} (\|\theta\|_{H^1}^{2} \|\theta\|_{H^2}^2 + \|v\|_{H^1}^{2} \|v\|_{H^2}^2 +1) (\|U_\epsilon\|_{L_\epsilon^2}^2 + \|\Theta_\epsilon\|_{L_\epsilon^2}^2) dt\Bigg] \\
    - \E\Big[\frac{1}{2} \int_0^{T} \big( a \nabla_3 I_{\alpha_\sigma} U_\epsilon, \nabla_3 I_{\alpha_\sigma} U_\epsilon \big)_{L_\epsilon^2} dt\Big] + \E\Big[J_5 + J_6 + \sup_{[0,T]} |\int_0^{\bcdot} \sum_k J_7^k d\beta^k|\Big]. \label{eq-before-cov-estimate}
\end{multline}
The terms left to estimate are $J_5$, $J_6$ and $\sup_{[0,T]} |\int_0^{\bcdot} \sum_k J_7^k d\beta^k|$. We begin with the latter, since we can readily bound it using the Burkholder-Davis-Gundy inequality. Thus, we find
\begin{multline}
    \E \Bigg[ \sup_{[0,T]} |\int_0^{\bcdot} \sum_k J_7^k d\beta^k| \Bigg] \\
    \leq C \E \Big( \sum_k \int_0^{T} \Big(\alpha_\sigma\epsilon^2 \Big(( \phi_k \bcdot \nabla_3) W_\epsilon, w \Big)_{L^2} + \alpha_\sigma \epsilon^2 \big(\Delta \phi_k^z, W_\epsilon \big)_{L^2} - (\partial_z \pi_k, W_\epsilon)_{L^2}\Big)^2 dt \Big)^{1/2}\\
    \leq C \alpha_\sigma \epsilon^2 \E \Big(\sum_k \int_0^{T} \Big( (I- K*)( \phi_k \bcdot \nabla_3) w, W_\epsilon \Big)_{L^2}^2 dt \Big)^{1/2}\\
    \leq C \alpha_\sigma \epsilon^2 \E \Big(\int_0^{T} \Big(\sum_k \|(I- K*)( \phi_k \bcdot \nabla_3) w \|_{L^2}^2 \Big) \| W_\epsilon \|_{L^2}^2 dt \Big)^{1/2}\\
    \leq C \alpha_\sigma \epsilon^2 \E  \Bigg[\sup_{[0,T]} \| W_\epsilon \|_{L^2} \Big( \int_0^{T} \Big(\sum_k \|(I- K*)( \phi_k \bcdot \nabla_3) w \|_{L^2}^2 \Big)  dt \Big)^{1/2}\Bigg]\\
    \leq C \epsilon^2 \E \sup_{[0,T]} \| W_\epsilon \|_{L^2}^2 + C \alpha_\sigma^2 \epsilon^2 \E \int_0^{T} \Big(\sum_k \|(I- K*)( \phi_k \bcdot \nabla_3) w \|_{L^2}^2 \Big)  dt,
\end{multline}
where we also used Young's inequality. Finally, we estimate $\E[J_5 + J_6]$, which can be interpreted as a covariation term stemming from It\={o}'s lemma. First we rewrite $J_5 + J_6$ as follows,
\begin{align*}
    2&(J_5 + J_6) = \sum_k \|\mathbf{P}_\epsilon^H (\phi_k \bcdot \nabla I_{\alpha_\sigma} u_\epsilon) - \mathbf{P}(\phi_k \bcdot \nabla v)  - \nabla_H \tilde{\pi}_k \|^2_{L^2}\\
    &+ \epsilon^2 \sum_k \|\mathbf{P}_\epsilon^z (\phi_k \bcdot \nabla I_{\alpha_\sigma} u_\epsilon) - \int_z^1 \nabla_H \bcdot (\reallywidetilde{\phi_k \bcdot \nabla v})  - \int_z^1 \Delta_H \tilde{\pi}_k \|^2_{L^2}\\
    &= \sum_k \Big\|\mathbf{P}_\epsilon\Big[
        \phi_k \bcdot \nabla I_{\alpha_\sigma} u_\epsilon - \begin{pmatrix}
        \mathbf{P}(\phi_k \bcdot \nabla v)\\
        \int_z^1 \nabla_H \bcdot (\reallywidetilde{\phi_k \bcdot \nabla v})
    \end{pmatrix} - \begin{pmatrix}
        \nabla_H \tilde{\pi}_k\\
        \int_z^1 \Delta_H \tilde{\pi}_k
    \end{pmatrix} \Big] \Big\|^2_{L_\epsilon^2}\\
    &= \sum_k \Big\|\mathbf{P}_\epsilon\Big[
        \phi_k \bcdot \nabla I_{\alpha_\sigma} U_\epsilon + \begin{pmatrix}
        \phi_k \bcdot \nabla v - \mathbf{P}(\phi_k \bcdot \nabla v)\\
        \alpha_\sigma \phi_k \bcdot \nabla w - \int_z^1 \nabla_H \bcdot (\reallywidetilde{\phi_k \bcdot \nabla v})
    \end{pmatrix} - \begin{pmatrix}
        \nabla_H \tilde{\pi}_k\\
        \int_z^1 \Delta_H \tilde{\pi}_k
    \end{pmatrix} \Big] \Big\|^2_{L_\epsilon^2}\\
    &= \sum_k \Big\|\mathbf{P}_\epsilon\Big[
        \phi_k \bcdot \nabla I_{\alpha_\sigma} U_\epsilon + \begin{pmatrix}
        (\mathbf{I}^{2D} - \mathbf{P}^{2D}) \overline{\phi_k \bcdot \nabla v}\\
        \alpha_\sigma \phi_k \bcdot \nabla w - w (\reallywidetilde{\phi_k \bcdot \nabla v})
    \end{pmatrix} - \begin{pmatrix}
        \nabla_H \tilde{\pi}_k\\
        \int_z^1 \Delta_H \tilde{\pi}_k
    \end{pmatrix} \Big] \Big\|^2_{L_\epsilon^2}\\
    &= \sum_k \Big\|\mathbf{P}_\epsilon\Big[
        \phi_k \bcdot \nabla I_{\alpha_\sigma} U_\epsilon + \begin{pmatrix}
        - \nabla_H \tilde{\pi}_k\\
        \alpha_\sigma \phi_k \bcdot \nabla w - w (\reallywidetilde{\phi_k \bcdot \nabla v}) - \int_z^1 \Delta_H \tilde{\pi}_k
    \end{pmatrix} \Big] \Big\|^2_{L_\epsilon^2}\\
    &\leq (a \nabla I_{\alpha_\sigma} U_\epsilon, \nabla I_{\alpha_\sigma} U_\epsilon)_{L_\epsilon^2} + \sum_k \|\mathbf{P}_\epsilon \begin{pmatrix}
        - \nabla_H \tilde{\pi}_k\\
        \alpha_\sigma \phi_k \bcdot \nabla w - w (\reallywidetilde{\phi_k \bcdot \nabla v}) - \int_z^1 \Delta_H \tilde{\pi}_k
    \end{pmatrix} \|_{L_\epsilon^2}^2\\
    &+ \Big(\phi_k \bcdot \nabla V_\epsilon, \mathbf{P}_\epsilon^H[- \nabla_H \tilde{\pi}_k]\Big)_{L^2} + \alpha_\sigma \epsilon^2 \Big(\phi_k \bcdot \nabla W_\epsilon, \mathbf{P}_\epsilon^z[\alpha_\sigma \phi_k \bcdot \nabla w - w (\reallywidetilde{\phi_k \bcdot \nabla v}) - \int_z^1 \Delta_H \tilde{\pi}_k]\Big)_{L^2} \\
    &\leq (a \nabla I_{\alpha_\sigma} U_\epsilon, \nabla I_{\alpha_\sigma} U_\epsilon)_{L_\epsilon^2} + \xi \| V_\epsilon\|_V^2 + C_\xi \sum_k \|\phi_k\|_{L^\infty}^2 \|\mathbf{P}_\epsilon^H \nabla_H \tilde{\pi}_k\|_{L^2}^2\\
    & + \xi \epsilon^2 \| W_\epsilon\|_V^2 + C_\xi \alpha_\sigma^2 \epsilon^2 \sum_k \|\phi_k\|_{L^\infty}^2 \|\mathbf{P}_\epsilon^z[\alpha_\sigma \phi_k \bcdot \nabla w - w (\reallywidetilde{\phi_k \bcdot \nabla v}) - \int_z^1 \Delta_H \tilde{\pi}_k]\|_{L^2}^2 \\
    & + \sum_k \|\mathbf{P}_\epsilon \begin{pmatrix}
        - \nabla_H \tilde{\pi}_k\\
        \alpha_\sigma \phi_k \bcdot \nabla w - w (\reallywidetilde{\phi_k \bcdot \nabla v}) - \int_z^1 \Delta_H \tilde{\pi}_k
    \end{pmatrix} \|_{L_\epsilon^2}^2.
\end{align*}
Using the noise regularity assumption \ref{smoothness-noise}, we infer
\begin{multline}
    (J_5 + J_6) \leq \frac{1}{2}(a \nabla I_{\alpha_\sigma} U_\epsilon, \nabla I_{\alpha_\sigma} U_\epsilon)_{L_\epsilon^2} + \xi \|U_\epsilon\|_{V_\epsilon}^2\\
    + C_\xi (\sup_k \|\phi_k\|_{L^\infty}^2) (1+\alpha_\sigma^2) \sum_k \|\mathbf{P}_\epsilon \begin{pmatrix}
        - \nabla_H \tilde{\pi}_k\\
        \alpha_\sigma \phi_k \bcdot \nabla w - w (\reallywidetilde{\phi_k \bcdot \nabla v}) - \int_z^1 \Delta_H \tilde{\pi}_k
    \end{pmatrix} \|_{L_\epsilon^2}^2
\end{multline}
Now, we estimate the last term on the RHS,
\begin{align*}
    \sum_k &\|\mathbf{P}_\epsilon \begin{pmatrix}
        - \nabla_H \tilde{\pi}_k\\
        \alpha_\sigma \phi_k \bcdot \nabla w - w (\reallywidetilde{\phi_k \bcdot \nabla v}) - \int_z^1 \Delta_H \tilde{\pi}_k
    \end{pmatrix}\|_{L_\epsilon^2}^2\\
    &= \sum_k \|\mathbf{P}_\epsilon \begin{pmatrix}
        - \nabla_H \tilde{\pi}_k\\
        (I-K*)[\alpha_\sigma \phi_k \bcdot \nabla w - w (\reallywidetilde{\phi_k \bcdot \nabla v})] - \int_z^1 \Delta_H \tilde{\pi}_k -\frac{1}{\epsilon^2} \partial_z \tilde{\pi}_k
    \end{pmatrix} \|_{L_\epsilon^2}^2\\
    &= \sum_k \|\mathbf{P}_\epsilon \begin{pmatrix}
        0\\
        (I-K*)[\alpha_\sigma \phi_k \bcdot \nabla w - w (\reallywidetilde{\phi_k \bcdot \nabla v})] - \int_z^1 \Delta_H \tilde{\pi}_k
    \end{pmatrix} \|_{L_\epsilon^2}^2\\
    &\leq C\epsilon^2 \sum_k \|\tilde{\pi}_k\|_{H^2}^2 + C \epsilon^2 \sum_k \|(I-K*)[\alpha_\sigma \phi_k \bcdot \nabla w - w (\reallywidetilde{\phi_k \bcdot \nabla v})]\|_{L^2}^2\\
    &\leq C \epsilon^2 (1+\|v\|_{H^2}^2 +\alpha_\sigma^2 \sum_k \|(I-K*)[ \phi_k \bcdot \nabla w ]\|_{L^2}^2),
\end{align*}
where we used the fact that $\mathbf{P}_\epsilon (\nabla_H \tilde{\pi}_k \: \frac{1}{\epsilon^2} \partial_z \tilde{\pi}_k)^\tr = \mathbf{P}_\epsilon \nabla_\epsilon \tilde{\pi}_k = 0$. Hence,
\begin{multline}
    \E\Bigg[\sup_{[0,T]} \frac{1}{2}\|U_\epsilon\|_{L_\epsilon^2}^2 + \int_0^{T} \|\nabla_3 U_\epsilon\|_{L_\epsilon^2}^2 dt \Bigg] +  \E\Big[ \frac{1}{2} \sup_{[0,T]} \|\Theta_\epsilon\|_{L^2}^2 + \int_0^{T} \|\Theta_{\epsilon}\|_{H^1}^2 dt\Big] \\
    \leq C \epsilon^2  \E\Bigg[\int_0^{T} \|v \|_{H^1}^4 + \| v\|_{H^2}^2 (1 + \|a\|_{L^\infty}^2) dt + \|v\|_{H^1}^{2} \|v\|_{H^2}^{2} dt \Bigg]  \\
    + C \E\Bigg[ \int_0^{T} (\|\theta\|_{H^1}^{2} \|\theta\|_{H^2}^2 + \|v\|_{H^1}^{2} \|v\|_{H^2}^2 +1) (\|U_\epsilon\|_{L_\epsilon^2}^2 + \|\Theta_\epsilon\|_{L_\epsilon^2}^2) dt\Bigg] \\
    + C (1+\alpha_\sigma^4) \epsilon^2 \E\Bigg[ \int_0^{T} \sum_k \|(I-K*)[ \phi_k \bcdot \nabla w ]\|_{L^2}^2 + \|(a - a^K) \nabla w\|_{L^2}^2 dt \Bigg]. \label{eq-estimateForThm1}
\end{multline}
As mentioned at the beginning of the proof, we can rewrite the estimate \eqref{eq-estimateForThm1} in terms of $\eta$ and $\zeta$, which yields,
\begin{multline}
    \E\big[\mathcal{E}_\epsilon^0(\eta,\zeta)\big]
    \leq C \epsilon^2  \E\Bigg[\int_\eta^\zeta \|v \|_{H^1}^4 + \| v\|_{H^2}^2 (1 + \|a\|_{L^\infty}^2) dt + \|v\|_{H^1}^{2} \|v\|_{H^2}^{2} dt \Bigg]  \\
    + C \E\Bigg[ \int_\eta^\zeta (\|\theta\|_{H^1}^{2} \|\theta\|_{H^2}^2 + \|v\|_{H^1}^{2} \|v\|_{H^2}^2 +1) (\|U_\epsilon\|_{L_\epsilon^2}^2 + \|\Theta_\epsilon\|_{L_\epsilon^2}^2) dt\Bigg] \\
    + C (1+\alpha_\sigma^4) \epsilon^2 \E\Bigg[ \int_0^{T} \sum_k \|(I-K*)[ \phi_k \bcdot \nabla w ]\|_{L^2}^2 + \|(a - a^K) \nabla w\|_{L^2}^2 dt \Bigg].
\end{multline}
Thus, we conclude by the use of the stochastic Grönwall lemma of \cite{GHZ_2009}, since the following holds,
\begin{equation}
    \E \int_{0}^{T \wedge \tau_\delta} \|v\|_{H^1}^4 + \|v\|_{H^1}^2 \|v\|_{H^2}^2 + \|\theta\|_{H^1}^4 + \|\theta\|_{H^1}^2 \|\theta\|_{H^2}^2 dt \leq C_\delta. \label{eq-mom4-H1-H2-bound}
\end{equation}
Furthermore, $\E\big[\mathcal{E}_\epsilon^0(0,T \wedge \tau_\delta) \big]$ converges to $0$ at order $\alpha_\sigma \epsilon$ whenever $\alpha_\sigma = o(\epsilon^{-1/2})$. In such case, $\mathcal{E}_\epsilon^0(0, T \wedge \tau_\delta)$ converges to $0$ in probability. By taking the limit $\delta \rightarrow \infty$, we deduce that $\mathcal{E}_\epsilon^0(0, T) \rightarrow 0$ in probability.

\CQFD

\section{Proof of Theorem \ref{theorem-cvg-strong}} \label{sec-thm-cvg-strong}

In this section, we commute to "fully periodic" boundary conditions to prove Theorem \ref{theorem-cvg-strong}. Notice that all the results presented previously hold in this case, the proof being identical. In the following, we use the same notations as in Section \ref{sec-thm-cvg-weak}. The proof of Theorem \ref{theorem-cvg-strong} is split into three steps: first we derive an energy estimate on the primitive equations (Lemma \ref{lemma-energy-estimate-PE}), then we establish the energy estimate stated in Theorem \ref{theorem-cvg-strong}.
\subsection{Estimate in $H^2-H^3$ for the primitive equations}
In this subsection, we prove the following lemma, which stands as an improved energy estimate on the primitive equations in periodic boundary conditions.
\begin{lemma}\label{lemma-energy-estimate-PE}
    Under the assumptions of Theorem \ref{theorem-cvg-strong}, let $T,\delta>0$. There exists a constant $C>0$, such that 
    \begin{equation}
        \E \sup_{[0, \tau_\delta \wedge T]} \|v\|_{\mathcal{D}(A)}^2 + \E \int_0^{\tau_\delta \wedge T} \|v\|_{\mathcal{D}(A^{3/2})}^2 dt \leq C, \label{eq:improved-estimate-PE}
    \end{equation}
    where $\tau_\delta$ is defined as in Theorem \ref{theorem-cvg-weak}.
\end{lemma}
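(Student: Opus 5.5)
We apply It\={o}'s formula to $\|A^{1/2}v\|_{H}^2$, i.e. the $\mathcal{D}(A)$-seminorm, on the periodic domain. To make this rigorous we work with a Galerkin truncation (or a spectral cut-off $\mathbf{P}_n$) of $(\Pi_\epsilon^{PE})$ and pass to the limit at the end by lower semicontinuity. Periodicity is essential here: $A=\mathbf{P}(-\Delta_3)$ commutes with derivatives and with $\mathbf{P}$, with no boundary terms. The estimate will be localized to $[0,\tau_\delta\wedge T]$. On this interval $\sup\|v\|_V^4+\int\|v\|_V^2\|v\|_{\mathcal{D}(A)}^2\le\delta$, and this bound is the source of the constant. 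The stochastic Grönwall lemma of \cite{GHZ_2009} closes the argument, exactly as in the proof of Theorem \ref{theorem-cvg-weak}. The tracer $\theta$ is handled simultaneously at the same regularity level, since it enters the pressure through hydrostatics.

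\textbf{Drift terms.} For $v$ we obtain
\[
\tfrac12 d\|\Delta v\|^2+\|\nabla\Delta v\|^2dt+\tfrac12(a\nabla\Delta v,\nabla\Delta v)dt=\Big[\text{nonlinear}+\text{Itô correction}+\text{lower order}\Big]dt+dM_t .
\]
The transport/Stokes-drift and viscous terms are treated in turn.
\begin{itemize}
\item The advection term $((u^*\cdot\nabla_3)v,\Delta^2 v)$ is integrated by parts once to $(\nabla[(u^*\cdot\nabla)v],\nabla\Delta v)$. The top-order piece $((u^*\cdot\nabla)\nabla v,\nabla\Delta v)$ is not cancelled. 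Using $w=\int_z^1\nabla_H\cdot v$, it is bounded by $\|\nabla v\|_{L^\infty}$-type and anisotropic estimates: $\|u\|_{L^\infty}\lesssim\|v\|_{H^2}^{1/2}\|v\|_{H^3}^{1/2}$ (up to lower order), and $\|w\|_{L^\infty_zH^{s}_{xy}}\lesssim\|v\|_{H^{s+1}}$. This yields $\xi\|v\|_{H^3}^2+C(1+\|v\|_V^2\|v\|_{\mathcal{D}(A)}^2)\|v\|_{\mathcal{D}(A)}^2$.
\item The $a$- and $u_s$-dependent drift terms are controlled by \eqref{smoothness-noise} and the assumptions on $u_s$, giving $\xi\|v\|_{H^3}^2+C\|v\|_{\mathcal{D}(A)}^2$.
\item The baroclinic pressure term $\nabla_H\int_z^1\partial_zp$ involves $\theta$ and $\alpha_\sigma^2\epsilon^2\nabla\cdot(a^K\nabla w)$. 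Since $K\in H^3$, the convolution absorbs derivatives, and this is bounded by $C(\|\theta\|_{H^2}+(1+\alpha_\sigma^2\epsilon^2)\|v\|_{H^2})\|v\|_{H^3}$.
\end{itemize}

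\textbf{Noise and the main obstacle.} The main obstacle is the balance between the Itô correction $\tfrac12\sum_k\|\nabla\Delta$-level norm of $\mathbf{P}(\phi_k\cdot\nabla v)+\Delta\phi_k^H+\nabla_H\tilde\pi_k$ (at $\mathcal{D}(A)$ level) and the dissipation $\tfrac12(a\nabla\Delta v,\nabla\Delta v)$. Because the noise is bidimensional, $z$-independent and divergence-free, the top-order part $\sum_k\|\phi_k\cdot\nabla\Delta v\|^2$ equals $(a\nabla\Delta v,\nabla\Delta v)$ and cancels exactly. What remains are commutators $[\Delta,\phi_k\cdot\nabla]v$, which involve at most second derivatives of $v$ times derivatives of $\phi_k$. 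These are bounded by $C\sum_k\|\phi_k\|_{H^4}^2\|v\|_{H^2}^2$, plus the additive $\|\Delta\phi_k\|_{H^2}^2$. One must check that $\mathbf{P}$ commutes with $\Delta$ in the periodic setting, so it does not spoil the cancellation; this is the delicate point, and it mirrors the $V$-level argument of \cite{DMM2025}. The martingale pressure contributions $\tilde\pi_k$, and the $K*$-filtered term $\alpha_\sigma\epsilon^2K*[(\phi_k\cdot\nabla)w]$, are smoothed by $K$. The martingale $M_t$ is handled by Burkholder–Davis–Gundy: $\E\sup|M|\le\tfrac14\E\sup\|v\|_{\mathcal{D}(A)}^2+C\E\int\|v\|_{\mathcal{D}(A)}^2dt$. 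The same steps are repeated for $\theta$.

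\textbf{Conclusion.} Collecting the bounds with $\xi$ small gives
\[
\E\sup_{[0,\tau_\delta\wedge T]}\|v\|_{\mathcal{D}(A)}^2+\E\int_0^{\tau_\delta\wedge T}\|v\|_{\mathcal{D}(A^{3/2})}^2\le C\|v_0\|_{\mathcal{D}(A)}^2+C+C\,\E\int_0^{\tau_\delta\wedge T}(1+\|v\|_V^2\|v\|_{\mathcal{D}(A)}^2+\|\theta\|_{H^2}^2)\|v\|_{\mathcal{D}(A)}^2dt .
\]
The Grönwall weight has integral at most $C(1+\delta)$ on the interval, so the stochastic Grönwall lemma yields \eqref{eq:improved-estimate-PE} with $C=C(\delta,T,\|v_0\|_{\mathcal{D}(A)})$.
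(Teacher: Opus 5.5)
Your architecture matches the paper's: It\^o's formula for $\|\Delta v\|_{L^2}^2=\|v\|_{\mathcal{D}(A)}^2$ on the torus, top-order cancellation against $\frac12(a\nabla\Delta v,\nabla\Delta v)$, pressure via $K\in H^3$, BDG, and stochastic Gr\"onwall localised at $\tau_\delta$. (You wrote $\|A^{1/2}v\|_H^2$, which is the $V$-norm, but your computation is at the right level.)

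\textbf{The advection estimate fails as written.} With your interpolation $\|u\|_{L^\infty}\lesssim\|v\|_{H^2}^{1/2}\|v\|_{H^3}^{1/2}$, the top-order piece is only bounded by $\|v\|_{H^2}^{3/2}\|v\|_{H^3}^{3/2}\le\xi\|v\|_{H^3}^2+C_\xi\|v\|_{\mathcal{D}(A)}^4\,\|v\|_{\mathcal{D}(A)}^2$. The Gr\"onwall weight is then $\|v\|_{\mathcal{D}(A)}^4$. But $\tau_\delta$ controls only $\sup\|(v,\theta)\|_V^4$ and $\int\|(v,\theta)\|_V^2\|(v,\theta)\|_{\mathcal{D}(A)}^2$, not $\int_0^{\tau_\delta}\|v\|_{\mathcal{D}(A)}^4$. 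The lemma of \cite{GHZ_2009} needs an almost sure bound on the integral of the weight, so it cannot be applied. To obtain the weight $1+\|v\|_V^2\|v\|_{\mathcal{D}(A)}^2$ you announce, you must spend only $H^1$ on the low-order factor. From $|w(x_H,z)|\le\int_{-1}^1|\nabla_H\bcdot v(x_H,z')|\,dz'$ and Agmon's inequality in the horizontal variables, you get $\|w\|_{L^\infty}\lesssim\|v\|_{H^1}^{1/2}\|v\|_{H^3}^{1/2}$, and you also have $\|v\|_{L^\infty}\lesssim\|v\|_{H^1}^{1/2}\|v\|_{H^2}^{1/2}$. Together these give $\|v\|_{H^1}^{1/2}\|v\|_{H^2}\|v\|_{H^3}^{3/2}\le\xi\|v\|_{H^3}^2+C_\xi\|v\|_{H^1}^2\|v\|_{H^2}^4$. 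This is exactly what the paper extracts from the anisotropic Ladyzhenskaya inequality of \cite{LT2019}. The pieces $(\partial_i u^*\bcdot\nabla)v$ need the same anisotropic treatment, since $\nabla_H w$ is a vertical integral of second derivatives of $v$. A plain $L^3$--$L^6$ bound again produces the bad weight.

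\textbf{Three smaller points.} First, $\|\theta\|_{H^2}^2$ should not sit in the Gr\"onwall weight, since $\tau_\delta$ does not bound $\int_0^{\tau_\delta}\|\theta\|_{H^2}^2$ almost surely. Your pressure bound in fact yields the additive term $C\|\theta\|_{H^2}^2+\xi\|v\|_{H^3}^2$, which is the paper's choice. Alternatively, if you carry $\|\theta\|_{\mathcal{D}(A)}^2$ in the Gr\"onwall quantity, the weight becomes $1+\|(v,\theta)\|_V^2\|(v,\theta)\|_{\mathcal{D}(A)}^2$.

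Second, the It\^o correction also contains the cross terms $2(\phi_k\bcdot\nabla\Delta v,[\Delta,\phi_k\bcdot\nabla]v)$. These are of size $\|v\|_{H^2}\|v\|_{H^3}$ and must be absorbed with $\xi\|v\|_{H^3}^2$; they are not bounded by $\|v\|_{H^2}^2$.

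Third, your BDG bound $\E\sup|M|\le\frac14\E\sup\|v\|_{\mathcal{D}(A)}^2+C\E\int\|v\|_{\mathcal{D}(A)}^2$ is correct only for the following reason, which you should state. After moving $\mathbf{P}$ and one $\Delta$ onto the other factor, the top-order part of the integrand is $(\phi_k\bcdot\nabla\Delta v,\Delta v)_{L^2}$, which vanishes because $\nabla\bcdot\phi_k=0$ and the domain is periodic. This is where your route is better than the paper's. The paper pairs $\nabla[(\phi_k\bcdot\nabla)v]$ with $\nabla\Delta v$ and ends with $C_\xi\E\sup\|v\|_{\mathcal{D}(A)}^2+\xi\E\int\|v\|_{\mathcal{D}(A^{3/2})}^2$. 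The first term there carries a large constant and cannot be absorbed into the left-hand side.
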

\emph{Proof:} Essentially, we adapt the method proposed in \cite{LT2019} to our stochastic setting. Remind that, in the problem ($\Pi_\epsilon^{PE}$), the horizontal momentum equation reads,
\begin{align}
    d_t v + \mathbf{P}\Big[\Big((v^* dt + \sigma dW_t^H) &\bcdot \nabla_H\Big) v + (w^* dt + \sigma dW_t^z) \partial_z v\Big] + A (v dt + \sigma dW_t^H) \nonumber\\
    &+ \nabla_H (\tilde{p}\:dt + d\tilde{p}_t^{\sigma}) - \frac{1}{2} \mathbf{P}[\nabla_3 \bcdot(a \nabla_3 v)] dt= 0,
\end{align}
where $\tilde{p}$ and $d\tilde{p}_t^\sigma$ denote the baroclinic components of $p$ and $dp_t^\sigma$, respectively. By It\={o}'s lemma, and using the fact that, with periodic boundary conditions, $A^2v = (-\Delta)^2 v$, we infer that
\begin{align}
    d_t \|v\|_{\mathcal{D}(A)}^2 &+ \Bigg(\Big((v^* dt + \sigma dW_t^H) \bcdot \nabla_H\Big) v + (w^* dt + \sigma dW_t^z) \partial_z v, (-\Delta )^2 v \Bigg)_{L^2} \nonumber\\
    & + \underbrace{\Big((-\Delta ) (v dt + \sigma dW_t^H), (-\Delta )^2 v \Big)_H}_{= \|v\|_{\mathcal{D}(A^{3/2})}^2 dt + ((-\Delta)^{3/2} \sigma dW_t^H, (-\Delta )^{3/2} v)_H} + \Big(\nabla_H (\tilde{p}\:dt + d\tilde{p}_t^{\sigma}) - \frac{1}{2} \nabla_3 \bcdot(a \nabla_3 v) dt,  (-\Delta )^2 v \Big)_{L^2} \nonumber\\
    &\leq \frac{1}{2} \sum_k \|\mathbf{P}[(\phi_k \bcdot \nabla_3) v] + \nabla_H \tilde{\pi}_k - \Delta \phi_k^H \|_{\mathcal{D}(A)}^2.
\end{align}
Moreover, we have
\begin{equation}
    \Bigg( \Big(v^* \bcdot \nabla_H\Big) v + w^* \partial_z v, (-\Delta )^2 v \Bigg)_{L^2} \leq - \Big( \nabla [(v^* \bcdot \nabla_H) v + w^* \partial_z v], \nabla \Delta v \Big)_{L^2},
\end{equation}
and, for $i \in \{x,y,z\}$ and $k \in \{x,y\}$, using Einstein's notation (with $j \in \{x,y,z\}$),
\begin{equation*}
    \Big[\nabla [(u^* \bcdot \nabla) v]\Big]_{ik} = \partial_i (u_j^* \partial_j v_k) = (\partial_i u_j^*) (\partial_j v_k) + u_j^* \partial_{ij} v_k.
\end{equation*}
Hence, by the Ladyzhenskaya-type relation of \cite{LT2019}, and Young's inequality,
\begin{align}
    \Big( \nabla [(v^* \bcdot \nabla_H) v + w^* \partial_z v], \nabla \Delta v \Big)_{L^2} &\leq C(1 + \|v\|_{H^1}^{1/2} \|v\|_{H^2} \|v\|_{H^3}^{3/2})\\
    &\leq C_\xi(1 + \|v\|_{H^1}^2 \|v\|_{H^2}^4) + \xi \|v\|_{H^3}^2 \nonumber
\end{align}
In addition,
\begin{align}
    \Big(\nabla_H \tilde{p}, (-\Delta )^2 v \Big)_{L^2} &= \Big(\nabla_H \int_z^1 [\theta - \frac{\alpha_\sigma^2 \epsilon^2}{2} \nabla_3 \bcdot (a^K \nabla_3 w)]dz', (-\Delta )^2 v \Big)_{L^2}\\
    &\leq (\|\theta\|_{H^2} + \|\nabla_3\bcdot(a^K \nabla w)\|_{H^2}) \|v\|_{\mathcal{D}(A^{3/2})}\\
    &\leq C_\xi (\|\theta\|_{H^2}^2 + \|v\|_{H^2}^2) + \xi \|v\|_{\mathcal{D}(A^{3/2})}^2,
\end{align}
using the fact that $K \in H^3(\mathcal{S},\R^3)$ and $a^K[\bcdot] = \sum_{k=1}^\infty \phi_k \mathcal{C}_K^* \mathcal{C}_K [\phi_k^\tr \bcdot]$, with $\mathcal{C}_K[\bcdot] = K* [\bcdot]$. Furthermore, using Einstein's notation again with $i,j,k \in \{x,y\}$ and $l,m \in \{x,y,z\}$,
\begin{align}
    -\frac{1}{2}\Big(\nabla_3 \bcdot(a \nabla_3 v) dt, \mathbf{P} (-\Delta )^2 v \Big)_{L^2} &= -\frac{1}{2} \Big(\partial_i (a_{ij} \partial_j v_k), \partial_{ll} \partial_{mm} v_k \Big)_{L^2} \\
    &= \frac{1}{2} \Big(\partial_{lm} (a_{ij} \partial_j v_k), \partial_{ilm} v_k \Big)_{L^2} \nonumber\\
    &\leq \frac{1}{2} \Big(a_{ij} \partial_{jlm} v_k, \partial_{ilm} v_k \Big)_{L^2} + C \|v\|_{\mathcal{D}(A)} \|v\|_{\mathcal{D}(A^{3/2})}, \nonumber
\end{align}
where $a$ is sufficiently smooth by \eqref{smoothness-noise}. For the covariation term, we remark that $A^v \mathbf{P} = \mathbf{P} (-\Delta)$ on $L^2(\mathbbm{T}_3,\R^2)$. Thus,
\begin{align}
    \sum_k &\|\mathbf{P}[(\phi_k \bcdot \nabla_3) v] + \nabla_H \tilde{\pi}_k - \Delta \phi_k^H \|_{\mathcal{D}(A)}^2 \nonumber\\
    &\leq \sum_k \|\mathbf{P} (-\Delta)[(\phi_k \bcdot \nabla_3) v + \nabla_H \tilde{\pi}_k - \Delta \phi_k^H]\|_H^2 \nonumber\\
    &\leq \sum_k \|A^v[(\phi_k \bcdot \nabla_3) v + \nabla_H \tilde{\pi}_k - \Delta \phi_k^H]\|_H^2 \nonumber\\
    &\leq C_\xi \Bigg(\|v\|_{\mathcal{D}(A)}^2 + \sum_k (\|\phi_k^H\|_{H^4}^2 + \|\phi_k\|_{L^\infty}^2 \|v\|_V^2 + \|\nabla_H \tilde{\pi}_k\|_{\mathcal{D}(A)}^2)\Bigg) \nonumber\\
    & \qquad \qquad \qquad \qquad \qquad \qquad \qquad \qquad + (a_{ij} \partial_{jlm} v_k, \partial_{ilm} v_k)_{L^2} + \xi \|v\|_{\mathcal{D}(A^{3/2})}^2,
\end{align}
for $\xi > 0$. Gathering the previous estimates, integrating over time, then taking the supremum and the expectation, we get, for all stopping times $\eta,\zeta \leq \tau_\delta \wedge T$ such that $\eta<\zeta$,
\begin{multline}
    \E \sup_{[\eta, \zeta]} \|v\|_{\mathcal{D}(A)}^2 + \E \int_\eta^\zeta \|v\|_{\mathcal{D}(A^{3/2})}^2 dt \leq C \E  \int_\eta^\zeta \Big(1+\|v\|_{V}^2\|v\|_{\mathcal{D}(A)}^4 + \|\theta\|_{H^2}^2 + \sum_k \|\nabla_H \tilde{\pi}_k\|_{\mathcal{D}(A)}^2\Big) dt \\
    + C \E \sup_{[\eta, \zeta]} \Big|\int_\eta^\zeta ((\sigma^H dW_t \bcdot \nabla) v - \Delta \sigma^H dW_t, \mathbf{P}(-\Delta)^2 v)_{L^2}\Big|.
\end{multline}
By the Burkholder-Davis-Gundy inequality, and using interpolation inequalities,
\begin{align}
    \E \sup_{[\eta, \zeta]} \Big|&\int_\eta^\zeta ((\sigma^H dW_t \bcdot \nabla_H) v - \Delta \sigma^H dW_t + \nabla_H d\tilde{p}_t^{\sigma}, \mathbf{P}(-\Delta)^2 v)_{L^2}\Big| \nonumber\\
    &\leq \E \Bigg[\Big(\sum_k \int_\eta^\zeta ((\phi_k^H \bcdot \nabla_H) v - \Delta \phi_k^H + \nabla_H \tilde{\pi}_k, \mathbf{P} (-\Delta)^2 v)_{L^2}^2 \Big)^{1/2}\Bigg] dt \nonumber\\
    &\leq C \E \Bigg[\Big(\sum_k \int_\eta^\zeta (1+\|\nabla[(\phi_k^H \bcdot \nabla_H) v - \Delta \phi_k^H + \nabla_H \tilde{\pi}_k]\|_{L^2}^2) \|v\|_{\mathcal{D}(A^{3/2})}^2 \Big)^{1/2}\Bigg] dt \nonumber\\
    &\leq C \E \Bigg[\Big(\int_\eta^\zeta (1 + \|v\|_{\mathcal{D}(A)}^2 + \sum_k \|\nabla_H \tilde{\pi}_k\|_{\mathcal{D}(A)}^2) \|v\|_{\mathcal{D}(A^{3/2})}^2 \Big)^{1/2}\Bigg] dt \nonumber\\
    &\leq C \E \Bigg[\Big(1+\sup_{[\eta,\zeta]} (\|v\|_{\mathcal{D}(A)}^2 + \sum_k \|\nabla_H \tilde{\pi}_k\|_{\mathcal{D}(A)}^2)\Big)^{1/2} \Big(\int_\eta^\zeta \|v\|_{\mathcal{D}(A^{3/2})}^2 \Big)^{1/2}\Bigg] dt \nonumber\\
    &\leq C_\xi \E \Bigg[1+\sup_{[\eta,\zeta]} (\|v\|_{\mathcal{D}(A)}^2 + \sum_k \|\nabla_H \tilde{\pi}_k\|_{\mathcal{D}(A)}^2)\Bigg]dt + \xi \E \Bigg[\int_\eta^\zeta \|v\|_{\mathcal{D}(A^{3/2})}^2\Bigg] dt.
\end{align}
In addition, since $K \in H^3(\mathcal{S},\R)$, we infer that
\begin{equation}
    \sum_k \|\nabla_H \tilde{\pi}_k\|_{\mathcal{D}(A)}^2 \leq C (1+ \|v\|_{\mathcal{D}(A)}^2).
\end{equation}
Hence,
\begin{equation}
    \E \sup_{[\eta, \zeta]} \|v\|_{\mathcal{D}(A)}^2 + \E \int_\eta^\zeta \|v\|_{\mathcal{D}(A^{3/2})}^2 dt \leq C \E \int_\eta^\zeta \Big(1+\|v\|_{V}^2\|v\|_{\mathcal{D}(A)}^2 \Big) \|v\|_{\mathcal{D}(A)}^2 + \|\theta\|_{H^2}^2 dt.
\end{equation}
This last result allows to conclude by the stochastic Grönwall lemma.
\CQFD

\subsection{Estimate in $H^1-H^2$ for the error}

In this subsection, we establish the results stated in Theorem \ref{theorem-cvg-strong}, with an emphasis on the estimate \eqref{eq-bound-thm2}.

In the following, we write $\tau_\epsilon$ in place of $\tau_\epsilon^{SNS}$ to economize notation. Let $\tau$ a stopping time such that $0 < \tau < \tau_\epsilon$. We further make an abuse of notation by writing $U_\epsilon$, $V_\epsilon$ and $W_\epsilon$ in place of $(U_\epsilon)_{\bcdot \wedge \tau}$, $(V_\epsilon)_{\bcdot \wedge \tau}$ and $(W_\epsilon)_{\bcdot \wedge \tau}$, respectively. First, we state the evolution equations of $V_\epsilon$ and $W_\epsilon$, which are meaningful since $u_\epsilon$ is a local-in-time strong solution to ($\Pi_\epsilon^{SNS}$),
\begin{multline*}
    d_t V_\epsilon + \Big((u_\epsilon^* dt + \sigma dW_t) \bcdot \nabla_H\Big) V_\epsilon + \Big(U_\epsilon \bcdot \nabla\Big) v dt - \frac{1}{2} \nabla_3 \bcdot (a \nabla_3 V_\epsilon) dt \\
    - \Delta V_\epsilon dt +\nabla_H(P_\epsilon dt + dP_t^{\sigma, \epsilon}) = 0,
\end{multline*}
and
\begin{multline*}
    d_t W_\epsilon + \Big((u_\epsilon^* dt + \alpha_\sigma \sigma dW_t) \bcdot \nabla \Big) W_\epsilon + \Big(U_\epsilon \bcdot \nabla\Big) w dt - \frac{\alpha_\sigma^2}{2} \nabla_3 \bcdot (a \nabla_3 W_\epsilon) dt - \Delta W_\epsilon \: dt\\
    + \frac{1}{\epsilon^2} \partial_z(P_\epsilon dt + dP_t^{\sigma, \epsilon}) + \frac{1}{\epsilon^2} \frac{\rho_\epsilon - \rho}{\rho_0}g dt = - (I-K*)(\alpha_\sigma \sigma dW_t \bcdot \nabla) w +  \frac{\alpha_\sigma^2}{2} \nabla_3 \bcdot ((a-a^K) \nabla w) dt.
\end{multline*}
Applying It\={o}'s lemma, and using an approximation argument that is similar to the one of Section \ref{sec-thm-cvg-weak}, with $\|U_\epsilon\|_{V_\epsilon}^2$, we deduce that
\begin{multline}
    \frac{1}{2} d_t \|U_\epsilon\|_{V_\epsilon}^2 + \|U_\epsilon\|_{\mathcal{D}_\epsilon(A)}^2 dt - \frac{1}{2} \Big(\nabla_3 \bcdot (a \nabla_3 I_{\alpha_\sigma} U_\epsilon), A I_{\alpha_\sigma} U_\epsilon \Big)_{L_\epsilon^2} dt \leq - \Bigg(\Big(U_\epsilon dt \bcdot \nabla_3\Big) U_\epsilon, A U_\epsilon \Bigg)_{L_\epsilon^2} \\
    - \Bigg(\Big((u^* dt + U_\epsilon dt) \bcdot \nabla_3\Big) U_\epsilon, A U_\epsilon \Bigg)_{L_\epsilon^2} - (g \Theta_\epsilon, A W_\epsilon)_{L^2} dt - \frac{\alpha_\sigma^2 \epsilon^2 }{2} \Big( \nabla_3 \bcdot ((a-a^K) \nabla w) dt, A W_\epsilon \Big)_{L^2}\\
    + \frac{1}{2} \sum_k \Big\| \mathbf{P}_\epsilon \Big[\phi_k \bcdot \nabla_3 I_{\alpha_\sigma} U_\epsilon + \begin{pmatrix}
        0 \\ \alpha_\sigma (I-K*)(\phi_k \bcdot \nabla_3)w
    \end{pmatrix}\Big] \Big\|_{V_\epsilon}^2 dt\\
    - \alpha_\sigma \epsilon^2 \Big((I-K*)( \sigma dW_t \bcdot \nabla) w\Big)_{L^2} - \Bigg(\Big(\sigma dW_t \bcdot \nabla_3\Big) I_{\alpha_\sigma} u, A U_\epsilon \Bigg)_{L_\epsilon^2}\\
    =: (J_1 + J_2 + J_3 + J_4 + J_5) dt + \sum_k (J_6^k + J_7^k) d\beta^k. \label{eq:big-equation-h1h2ErrorEstimate}
\end{multline}
First, we remark the following about the last term on the LHS of \eqref{eq:big-equation-h1h2ErrorEstimate},
\begin{multline}
    \frac{1}{2} \Big(\nabla_3 \bcdot (a \nabla_3 I_{\alpha_\sigma} U_\epsilon), A I_{\alpha_\sigma} U_\epsilon \Big)_{L_\epsilon^2} \leq - \frac{1}{2} \Big(a \nabla_3 \nabla_3 I_{\alpha_\sigma} U_\epsilon, \nabla_3 \nabla_3 I_{\alpha_\sigma} U_\epsilon \Big)_{L_\epsilon^2} \\
    + C_\xi (1+\alpha_\sigma^2)^2 \|U_\epsilon\|_{V_\epsilon}^2 + \xi \| U_\epsilon\|_{\mathcal{D}_\epsilon(A)}^2.
\end{multline}
Next, we develop the expression of $J_1 + J_2$,
\begin{multline}
    J_1 + J_2 = -\Bigg(\Big(U_\epsilon \bcdot \nabla_3\Big) U_\epsilon + \Big(u^* \bcdot \nabla_3\Big) U_\epsilon + \Big(U_\epsilon \bcdot \nabla_3\Big) u, A U_\epsilon \Bigg)_{L_\epsilon^2} \\
    = -\Bigg(\Big(U_\epsilon \bcdot \nabla_3\Big) V_\epsilon + \Big(u^* \bcdot \nabla_3\Big) V_\epsilon + \Big(U_\epsilon \bcdot \nabla_3\Big) v, A V_\epsilon \Bigg)_{L^2} \\
    - \epsilon^2 \Bigg(\Big(U_\epsilon \bcdot \nabla_3\Big) W_\epsilon + \Big(u^* \bcdot \nabla_3\Big) W_\epsilon + \Big(U_\epsilon \bcdot \nabla_3\Big) w, A W_\epsilon \Bigg)_{L^2}.
\end{multline}
Remark that, using again the Ladyzhenskaya-type relation of \cite{LT2019},
\begin{multline}
    \Bigg(\Big(U_\epsilon \bcdot \nabla_3\Big) V_\epsilon + \Big(u^* \bcdot \nabla_3\Big) V_\epsilon + \Big(U_\epsilon \bcdot \nabla_3\Big) v, A V_\epsilon \Bigg)_{L^2}    \leq \Big[\|\nabla_3 V_\epsilon\|_{L^2} \|\Delta_3 V_\epsilon\|_{L^2} \\
    + \|\nabla_3 V_\epsilon\|_{L^2}^{1/2} \|\Delta_3 V_\epsilon\|_{L^2}^{1/2} \Big(\|\nabla_3 v^*\|_{L^2}^{1/2} \|\Delta_3 v^*\|_{L^2}^{1/2} +  \|\nabla_3 v\|_{L^2}^{1/2} \|\Delta_3 v\|_{L^2}^{1/2}\Big)\Big] \|V_\epsilon\|_{\mathcal{D}(A)},
\end{multline}
and
\begin{multline}
    \Bigg(\Big(U_\epsilon \bcdot \nabla_3\Big) W_\epsilon + \Big(u^* \bcdot \nabla_3\Big) W_\epsilon + \Big(U_\epsilon \bcdot \nabla_3\Big) w, A W_\epsilon \Bigg)_{L^2} \leq \Big[\|\nabla_3 V_\epsilon\|_{L^2}^{1/2} \|\Delta_3 V_\epsilon\|_{L^2}^{1/2} \|\nabla_3 W_\epsilon\|_{L^2}^{1/2} \|\Delta_3 W_\epsilon\|_{L^2}^{1/2}\\
    + \|\nabla_3 v^*\|_{L^2}^{1/2} \|\Delta_3 v^*\|_{L^2}^{1/2} \|\nabla_3 W_\epsilon\|_{L^2}^{1/2} \|\Delta_3 W_\epsilon\|_{L^2}^{1/2} + \|\nabla_3 V_\epsilon\|_{L^2}^{1/2} \|\Delta_3 V_\epsilon\|_{L^2}^{1/2} \|\nabla_3 w\|_{L^2}^{1/2} \|\Delta_3 w\|_{L^2}^{1/2}\Big] \|W_\epsilon\|_{H^2}.
\end{multline}
Hence,
\begin{multline}
    J_1 + J_2 \leq C_\xi \|V_\epsilon\|_{V}^2 \|V_\epsilon\|_{\mathcal{D}(A)}^2 + C_\xi (1+\|v\|_{V}^2)(1+ \|v\|_{\mathcal{D}(A)}^2) \|V_\epsilon\|_V^2\\
    + C_\xi \epsilon^2 (1+\|U_\epsilon\|_{V_\epsilon}^2)(1+ \|v\|_{H^3}^2 + \|V_\epsilon\|_{\mathcal{D}(A)}^2) +  \xi \|\nabla U_\epsilon\|_{V_\epsilon}^2.
\end{multline}
Additionally, for $J_3$,
\begin{equation}
    J_3 = - (g \Theta_\epsilon, A W_\epsilon)_{L^2} \leq \|\Theta_\epsilon\|_{V} \|W_\epsilon\|_{V} \leq \|\Theta_\epsilon\|_{V} \|V_\epsilon\|_{\mathcal{D}(A)} \leq C\|\Theta_\epsilon\|_{V}^2 + \xi \|V_\epsilon\|_{\mathcal{D}(A)}^2.
\end{equation}
Moreover, for $J_4$,
\begin{align}
    J_4 = - \frac{\alpha_\sigma^2 \epsilon^2}{2} \Big(\nabla_3 \bcdot ((a-a^K) \nabla &w), AW_\epsilon\Big)_{L_\epsilon^2} = \frac{\alpha_\sigma^2 \epsilon^2}{2} \|(a - a^K)\nabla w\|_{H^1} \|W_\epsilon\|_{\mathcal{D}(A)} \nonumber\\
    &\leq C_\xi \frac{\alpha_\sigma^4 \epsilon^2}{2} \|(a - a^K)\nabla w\|_{H^1}^2  + \xi \epsilon^2 \|W_\epsilon\|_{\mathcal{D}(A)}^2. 
\end{align}
Furthermore, for $J_5$,
\begin{align}
    J_5 &= \frac{1}{2} \sum_k \Big\| \mathbf{P}_\epsilon \Big[\phi_k \bcdot \nabla_3 I_{\alpha_\sigma} U_\epsilon + \begin{pmatrix}
        0 \\ \alpha_\sigma (I-K*)[(\phi_k \bcdot \nabla_3)w]
    \end{pmatrix} \Big] \Big\|_{V_\epsilon}^2\\
    &\leq C_\xi \alpha_\sigma^4 \epsilon^2\|v\|_{H^3}^2 (1+\|U_\epsilon\|_{V_\epsilon}^2) + \xi \|U_\epsilon\|_{\mathcal{D}_\epsilon(A)}^2 + \sum_k \frac{1}{2} (\phi_k \bcdot \nabla_3 \nabla_3I_{\alpha_\sigma} U_\epsilon, \phi_k \bcdot \nabla_3 \nabla_3 I_{\alpha_\sigma} U_\epsilon)_{L_\epsilon^2} \nonumber\\
    &\qquad \qquad \qquad \qquad \qquad \qquad \qquad \qquad \qquad + C_\xi \alpha_\sigma^2 \epsilon^2 \sum_k \|(I-K*)[(\phi_k \bcdot \nabla_3)w]\|_{H^1}^2. \nonumber
\end{align}
Hence, taking the integral, the supremum then the expectation of \eqref{eq:big-equation-h1h2ErrorEstimate} and gathering the previous estimates yields,
\begin{multline}
    \E \sup_{[\eta, \zeta]} \|U_\epsilon\|_{V_\epsilon}^2 + \E \int_{\eta}^{\zeta} \| U_\epsilon\|_{\mathcal{D}_\epsilon(A)}^2 dt \leq C \E \int_\eta^\zeta  \|U_\epsilon\|_{V_\epsilon}^2 \|U_\epsilon\|_{\mathcal{D}_\epsilon(A)}^2 dt + \xi \E\int_\eta^\zeta \|U_\epsilon\|_{\mathcal{D}_\epsilon(A)}^2 dt\\
    + C\epsilon^2 (1+ \alpha_\sigma^4)\E\int_\eta^\zeta (1+\|v\|_{H^3}^2)(1+\|(a - a^K)\nabla w\|_{H^1}^2 + \sum_k \|(I-K*)[(\phi_k \bcdot \nabla_3)w]\|_{H^1}^2)(1+\|U_\epsilon\|_{V_\epsilon}^2+  \|\Theta_\epsilon\|_{V}^2) dt \\
    + C \sum_k \E \sup_{[\eta, \zeta]} \Bigg|\int_\eta^\zeta (J_6^k + J_7^k) d\beta_k\Bigg|.
\end{multline}
In addition, by the Burkholder-Davis-Gundy inequality,
\begin{align}
    \E &\sup_{[\eta, \zeta]} \Bigg|\int_\eta^\zeta \Bigg( \mathbf{P}_\epsilon \Big[\phi_k \bcdot \nabla_3 I_{\alpha_\sigma} U_\epsilon + \begin{pmatrix}
        0 \\ \alpha_\sigma (I-K*)(\phi_k \bcdot \nabla_3)w
    \end{pmatrix}\Big], A I_{\alpha_\sigma} U_\epsilon \Bigg)_{L_\epsilon^2} d\beta_k\Bigg| \\
    &\leq \E \Bigg| \int_\eta^\zeta \Bigg( \mathbf{P}_\epsilon \Big[\phi_k \bcdot \nabla_3 I_{\alpha_\sigma} U_\epsilon + \begin{pmatrix}
        0 \\ \alpha_\sigma (I-K*)(\phi_k \bcdot \nabla_3)w
    \end{pmatrix}\Big], A I_{\alpha_\sigma} U_\epsilon \Bigg)_{L_\epsilon^2}^2 \Bigg|^{1/2} dt \nonumber\\
    &\leq \sqrt{1+\alpha_\sigma^2} \E \Bigg| \int_\eta^\zeta \|\phi_k\|_{H^4}^2 (1+ \|U_\epsilon\|_{V_\epsilon}^2 + \|(I-K*)(\phi_k \bcdot \nabla_3)w\|_{L^2}^2) \|A U_\epsilon\|_{L_\epsilon^2}^2 \Bigg|^{1/2} dt \nonumber\\
    &\leq C_\xi \epsilon^2 (1+\alpha_\sigma^4) \E \|\phi_k\|_{H^4}^2 \sup_{[\eta, \zeta]}  (1+ \|U_\epsilon\|_{V_\epsilon}^2 + \|(I-K*)(\phi_k \bcdot \nabla_3)w\|_{L^2}^2) + \xi \E \int_\eta^\zeta \|A U_\epsilon\|_{L_\epsilon^2}^2 dt. \nonumber
\end{align}
Therefore,
\begin{multline}
    \E \sup_{[\eta, \zeta]} \|U_\epsilon\|_{V_\epsilon}^2 + \E \int_{\eta}^{\zeta} \| U_\epsilon\|_{\mathcal{D}_\epsilon(A)}^2 dt \leq C \E \int_\eta^\zeta  \|U_\epsilon\|_{V_\epsilon}^2 \|U_\epsilon\|_{\mathcal{D}_\epsilon(A)}^2 dt + \xi \E\int_\eta^\zeta \|U_\epsilon\|_{\mathcal{D}_\epsilon(A)}^2 dt\\
    + C\epsilon^2 (1+ \alpha_\sigma^4)\E\int_\eta^\zeta \sum_k (1+\|v\|_{H^3}^2)(1+\|(a - a^K)\nabla w\|_{H^1}^2 + \|(I-K*)[(\phi_k \bcdot \nabla_3)w]\|_{H^1}^2)(1+\|U_\epsilon\|_{V_\epsilon}^2+  \|\Theta_\epsilon\|_{V}^2) dt.
\end{multline}
Then we deduce that, for some constant $C_1 >0$,
\begin{multline}
    \E \sup_{[\eta, \zeta]} (\|U_\epsilon\|_{V_\epsilon}^2 + \|\Theta_\epsilon\|_{V}^2) + \E \int_\eta^\zeta(\|U_\epsilon\|_{\mathcal{D}_\epsilon(A)}^2 + \|\Theta_\epsilon\|_{\mathcal{D}(A)}^2) dt \leq C_1 \E \int_\eta^\zeta  \|U_\epsilon\|_{V_\epsilon}^2 \|U_\epsilon\|_{\mathcal{D}_\epsilon(A)}^2 dt \\
    + C_1\epsilon^2 (1+ \alpha_\sigma^4)\E\int_\eta^\zeta (1+\|v\|_{H^3}^2)(1+\|(a - a^K)\nabla w\|_{H^1}^2 + \sum_k \|(I-K*)[(\phi_k \bcdot \nabla_3)w]\|_{H^1}^2)(1+\|U_\epsilon\|_{V_\epsilon}^2+  \|\Theta_\epsilon\|_{V}^2) dt.
\end{multline}
Denote by
\begin{multline}
    \check{\tau}_\epsilon = \inf \Big\{t \in \R_+ \Big| \sup_{[0, t]} (\|U_\epsilon\|_{V_\epsilon}^2 + \|\Theta_\epsilon\|_{V}^2) + \int_0^t(\| U_\epsilon\|_{\mathcal{D}_\epsilon(A)}^2 + \|\Theta_\epsilon\|_{\mathcal{D}(A)}^2) dt > \frac{1}{4C_1} \Big\} \leq \tau_\epsilon.
\end{multline}
Therefore, by the stochastic Grönwall lemma and Lemma \ref{eq:improved-estimate-PE}, if $\epsilon < \frac{1}{\sqrt{2C_1}}$,
\begin{multline}
    \E \sup_{[0, \check{\tau}_{\epsilon} \wedge \tau_\delta \wedge T]} (\|U_\epsilon\|_{V_\epsilon}^2 + \|\Theta_\epsilon\|_{V}^2) + \E \int_{0}^{\check{\tau}_{\epsilon} \wedge \tau_\delta \wedge T} (\|U_\epsilon\|_{\mathcal{D}_\epsilon(A)}^2 + \| \Theta_\epsilon\|_{\mathcal{D}(A)}^2) dt \\
    \leq C_1\epsilon^2 (1+ \alpha_\sigma^4)\E\int_{0}^{\check{\tau}_{\epsilon} \wedge \tau_\delta \wedge T} (1+\|v\|_{H^3}^2)(1+\|(a - a^K)\nabla w\|_{H^1}^2 + \sum_k \|(I-K*)[(\phi_k \bcdot \nabla_3)w]\|_{H^1}^2) dt.
\end{multline}
Concerning the asymptotic behaviour of $\tau_\epsilon$, the last equation implies that the following convergence happens in probability, whenever $\alpha_\sigma = o(\epsilon^{-1/2})$,
\begin{equation}
    \sup_{[0, \check{\tau}_{\epsilon} \wedge \tau_\delta \wedge T]} (\|U_{\epsilon}\|_{V_\epsilon}^2 + \|\Theta_{\epsilon}\|_{V}^2) + \int_{0}^{\check{\tau}_{\epsilon} \wedge \tau_\delta \wedge T} \Big(\|U_{\epsilon}\|_{\mathcal{D}_{\epsilon}(A)}^2 + \|\Theta_{\epsilon}\|_{\mathcal{D}(A)}^2\Big) dt \rightarrow 0. \label{eq-convergenceTo0-E1eps}
\end{equation}
Then, $\check{\tau}_{\epsilon} \wedge \tau_\delta \rightarrow \tau_\delta$ in probability. Since $\check{\tau}_\epsilon \leq \tau_\epsilon$, we infer that $\tau_\epsilon \wedge \tau_\delta \rightarrow \tau_\delta$ in probability as well. In addition, remind that, for all stopping times $\eta<\zeta$, $\mathcal{E}_\epsilon^1(\eta,\zeta)$ is defined by
\begin{equation}
    \mathcal{E}_\epsilon^1(\eta,\zeta) := \left\{
    \begin{array}{ll}
        \sup_{[\eta, \zeta]} \frac{1}{2}\Big(\|U_\epsilon\|_{V_\epsilon^2}^2+\|\Theta_\epsilon\|_{V}^2\Big) + \int_{\eta}^{\zeta} \Big(\|U_\epsilon\|_{D_\epsilon(A)^2}^2 + \|\Theta_{\epsilon}\|_{\mathcal{D}(A)}^2\Big) dt \text{ if $\zeta \geq \tau_\epsilon$},\\
        \infty \text{ otherwise}.
    \end{array}
    \right.
\end{equation}
With this notation, equation \eqref{eq-convergenceTo0-E1eps} and the fact that $\check{\tau}_{\epsilon} \wedge \tau_\delta \rightarrow \tau_\delta$ yield,
\begin{equation}
    \mathcal{E}_\epsilon^1(0, \tau_\delta \wedge T) \rightarrow 0, \text{ in probability.}
\end{equation}
Hence, by taking the limit $\delta \rightarrow \infty$, we deduce that $\mathcal{E}_\epsilon^1(0, T) \rightarrow 0$ in probability.

\CQFD

\noindent
\textbf{Acknowledgements:}
The authors acknowledge the support of the ERC EU project 856408-STUOD and benefit from the support of the French government ``Investissements d'Avenir'' program ANR-11-LABX-0020-01.

\bigskip

\noindent
\textbf{Conflict of interests:}
We declare we have no competing interests.

\bigskip

\noindent
\textbf{Data availability:}
The data that support this study are openly available.

\printbibliography[title=References]

@misc{AV2024-NS-tranport-noise,
      title={Stochastic Navier-Stokes equations for turbulent flows in critical spaces}, 
      author={Antonio Agresti and Mark Veraar},
      year={2023},
      eprint={2107.03953},
      archivePrefix={arXiv},
      primaryClass={math.AP},
      url={https://arxiv.org/abs/2107.03953}, 
}

@misc{DMM2025,
      title={Stochastic interpretations of the oceanic primitive equations with relaxed hydrostatic assumptions}, 
      author={Arnaud Debussche and Étienne Mémin and Antoine Moneyron},
      year={2025},
      eprint={2502.14946},
      archivePrefix={arXiv},
      primaryClass={math.AP},
      url={https://arxiv.org/abs/2502.14946}, 
}

@book{pedlosky2013geophysical,
  title={Geophysical fluid dynamics},
  author={Pedlosky, Joseph},
  year={2013},
  publisher={Springer Science \& Business Media}
}

@article{LT2019,
title = {The primitive equations as the small aspect ratio limit of the Navier–Stokes equations: Rigorous justification of the hydrostatic approximation},
journal = {Journal de Mathématiques Pures et Appliquées},
volume = {124},
pages = {30-58},
year = {2019},
issn = {0021-7824},
doi = {https://doi.org/10.1016/j.matpur.2018.04.006},
url = {https://www.sciencedirect.com/science/article/pii/S0021782418300503},
author = {Jinkai Li and Edriss S. Titi}
}

@article{PF2001,
author = {Az\'{e}rad, Pascal and Guill\'{e}n, Francisco},
title = {Mathematical Justification of the Hydrostatic Approximation in the Primitive Equations of Geophysical Fluid Dynamics},
journal = {SIAM Journal on Mathematical Analysis},
volume = {33},
number = {4},
pages = {847-859},
year = {2001},
doi = {10.1137/S0036141000375962},
URL = {https://doi.org/10.1137/S0036141000375962},
eprint = {https://doi.org/10.1137/S0036141000375962}
}

@article{DHM_2023,
	author = {Debussche, Arnaud and Hug, B{\'e}renger and M{\'e}min, Etienne},
	doi = {10.1007/s00021-023-00764-0},
	id = {Debussche2023},
	isbn = {1422-6952},
	journal = {Journal of Mathematical Fluid Mechanics},
	number = {1},
	pages = {19},
	title = {A Consistent Stochastic Large-Scale Representation of the Navier--Stokes Equations},
	url = {https://doi.org/10.1007/s00021-023-00764-0},
	volume = {25},
	year = {2023},
    month ={1}}

@article{DGHT_2011,
	doi = {10.1016/j.physd.2011.03.009},
  
	url = {https://doi.org/10.1016%2Fj.physd.2011.03.009},
  
	year = 2011,
	month = {7},
  
	publisher = {Elsevier {BV}},
  
	volume = {240},
  
	number = {14-15},
  
	pages = {1123--1144},
  
	author = {Arnaud Debussche and Nathan Glatt-Holtz and Roger Temam},
  
	title = {Local martingale and pathwise solutions for an abstract fluids model},
  
	journal = {Physica D: Nonlinear Phenomena}
}

@article{DGHTZ_2012,
	doi = {10.1088/0951-7715/25/7/2093},
  
	url = {https://doi.org/10.1088%2F0951-7715%2F25%2F7%2F2093},
  
	year = 2012,
	month = {6},
  
	publisher = {{IOP} Publishing},
  
	volume = {25},
  
	number = {7},
  
	pages = {2093--2118},
  
	author = {Debussche, Arnaud and Glatt-Holtz, Nathan and Temam, Roger and Ziane, Mohammed},
  
	title = {Global existence and regularity for the 3D stochastic primitive equations of the ocean and atmosphere with multiplicative white noise},
  
	journal = {Nonlinearity}
}

@article{BS_2021,
	author = {Zdzislaw Brze{{\'z}}niak and Jakub Slav{\'i}k},
	doi = {https://doi.org/10.1016/j.jde.2021.05.049},
	issn = {0022-0396},
	journal = {Journal of Differential Equations},
	pages = {617-676},
	title = {Well-posedness of the 3D stochastic primitive equations with multiplicative and transport noise},
	url = {https://www.sciencedirect.com/science/article/pii/S0022039621003521},
	volume = {296},
	year = {2021}}

@article{FG_1995,
	title = {Martingale and stationary solutions for stochastic {Navier}-{Stokes} equations},
	volume = {102},
	issn = {1432-2064},
	url = {https://doi.org/10.1007/BF01192467},
	doi = {10.1007/BF01192467},
	number = {3},
	journal = {Probability Theory and Related Fields},
	author = {Flandoli, Franco and Gatarek, Dariusz},
	month = sep,
	year = {1995},
	pages = {367--391}
}

@article{AHHS_2022,
	author = {Antonio Agresti and Matthias Hieber and Amru Hussein and Martin Saal},
	doi = {10.1007/s40072-022-00277-3},
	id = {Agresti2022},
	isbn = {2194-041X},
	journal = {Stochastics and Partial Differential Equations: Analysis and Computations},
	title = {The stochastic primitive equations with transport noise and turbulent pressure},
	url = {https://doi.org/10.1007/s40072-022-00277-3},
	year = {2022},
    month= {10}}

@misc{AHHS_2022_preprint,
      title={The stochastic primitive equations with non-isothermal turbulent pressure}, 
      author={Antonio Agresti and Matthias Hieber and Amru Hussein and Martin Saal},
      year={2023},
      eprint={2210.05973},
      archivePrefix={arXiv},
      primaryClass={math.AP}
}

@article{Mémin_2014,
  title={Fluid flow dynamics under location uncertainty},
  author={M{\'e}min, Etienne},
  journal={Geophysical \& Astrophysical Fluid Dynamics},
  volume={108},
  number={2},
  pages={119--146},
  year={2014},
  publisher={Taylor \& Francis}
}

@inproceedings{TML_2023,
	address = {Cham},
	title = {Primitive {Equations} {Under} {Location} {Uncertainty}: {Analytical} {Description} and {Model} {Development}},
	isbn = {978-3-031-18988-3},
	booktitle = {Stochastic {Transport} in {Upper} {Ocean} {Dynamics}},
	publisher = {Springer International Publishing},
	author = {Tucciarone, Francesco L and Mémin, Etienne and Li, Long},
	editor = {Chapron, Bertrand and Crisan, Dan and Holm, Darryl and Mémin, Etienne and Radomska, Anna},
	year = {2023},
	pages = {287--300}
}

@article{CT_2007,
  title={Global well-posedness of the three-dimensional viscous primitive equations of large scale ocean and atmosphere dynamics},
  author={Cao, Chongsheng and Titi, Edriss S},
  journal={Annals of Mathematics},
  pages={245--267},
  year={2007},
  publisher={JSTOR}
}

@article{Berner-Al_2017,
  title={Stochastic parameterization: Toward a new view of weather and climate models},
  author={Berner, Judith and Achatz, Ulrich and Batte, Lauriane and Bengtsson, Lisa and De La Camara, Alvaro and Christensen, Hannah M and Colangeli, Matteo and Coleman, Danielle RB and Crommelin, Daan and Dolaptchiev, Stamen I and others},
  journal={Bulletin of the American Meteorological Society},
  volume={98},
  number={3},
  pages={565--588},
  year={2017},
  publisher={American Meteorological Society}
}

@article{FOBWL_2015,
  title={Stochastic climate theory and modeling},
  author={Franzke, Christian LE and O'Kane, Terence J and Berner, Judith and Williams, Paul D and Lucarini, Valerio},
  journal={Wiley Interdisciplinary Reviews: Climate Change},
  volume={6},
  number={1},
  pages={63--78},
  year={2015},
  publisher={Wiley Online Library}
}

@book{FO_2017,
  title={Nonlinear and stochastic climate dynamics},
  author={Franzke, Christian LE and O'Kane, Terence J},
  year={2017},
  publisher={Cambridge University Press}
}

@article{MTV_1999,
  title={Models for stochastic climate prediction},
  author={Majda, Andrew J and Timofeyev, Ilya and Vanden Eijnden, Eric},
  journal={Proceedings of the National Academy of Sciences},
  volume={96},
  number={26},
  pages={14687--14691},
  year={1999},
  publisher={National Acad Sciences}
}

@article{Leith_1990,
  title={Stochastic backscatter in a subgrid-scale model: Plane shear mixing layer},
  author={Leith, CE},
  journal={Physics of Fluids A: Fluid Dynamics},
  volume={2},
  number={3},
  pages={297--299},
  year={1990},
  publisher={American Institute of Physics}
}

@article{MT_1992,
  title={Stochastic backscatter in large-eddy simulations of boundary layers},
  author={Mason, Paul J and Thomson, David J},
  journal={Journal of Fluid Mechanics},
  volume={242},
  pages={51--78},
  year={1992},
  publisher={Cambridge University Press}
}

@article{BMP_1999,
  title={Stochastic representation of model uncertainties in the ECMWF ensemble prediction system},
  author={Buizza, Roberto and Miller, M and Palmer, Tim N},
  journal={Quarterly Journal of the Royal Meteorological Society},
  volume={125},
  number={560},
  pages={2887--2908},
  year={1999},
  publisher={Wiley Online Library}
}

@article{Shutts_2005,
  title={A kinetic energy backscatter algorithm for use in ensemble prediction systems},
  author={Shutts, Glenn},
  journal={Quarterly Journal of the Royal Meteorological Society: A journal of the atmospheric sciences, applied meteorology and physical oceanography},
  volume={131},
  number={612},
  pages={3079--3102},
  year={2005},
  publisher={Wiley Online Library}
}

@article{Schmitt_2007,
  title={About Boussinesq's turbulent viscosity hypothesis: historical remarks and a direct evaluation of its validity},
  author={Schmitt, Fran{\c{c}}ois G},
  journal={Comptes Rendus M{\'e}canique},
  volume={335},
  number={9-10},
  pages={617--627},
  year={2007},
  publisher={Elsevier}
}

@article{CDMR_2018,
  title={Large-scale flows under location uncertainty: a consistent stochastic framework},
  author={Chapron, Bertrand and D{\'e}rian, Pierre and M{\'e}min, Etienne and Resseguier, Valentin},
  journal={Quarterly Journal of the Royal Meteorological Society},
  volume={144},
  number={710},
  pages={251--260},
  year={2018},
  publisher={Wiley Online Library}
}

@article{BCCLM_2020,
  title={Deciphering the role of small-scale inhomogeneity on geophysical flow structuration: a stochastic approach},
  author={Bauer, Werner and Chandramouli, Pranav and Chapron, Bertrand and Li, Long and M{\'e}min, Etienne},
  journal={Journal of Physical Oceanography},
  volume={50},
  number={4},
  pages={983--1003},
  year={2020},
  publisher={American Meteorological Society}
}

@article{RMC_2017_Pt1,
  title={Geophysical flows under location uncertainty, Part I Random transport and general models},
  author={Resseguier, Valentin and M{\'e}min, Etienne and Chapron, Bertrand},
  journal={Geophysical \& Astrophysical Fluid Dynamics},
  volume={111},
  number={3},
  pages={149--176},
  year={2017},
  publisher={Taylor \& Francis}
}

@article{RMC_2017_Pt2,
  title={Geophysical flows under location uncertainty, part II quasi-geostrophy and efficient ensemble spreading},
  author={Resseguier, Valentin and M{\'e}min, Etienne and Chapron, Bertrand},
  journal={Geophysical \& Astrophysical Fluid Dynamics},
  volume={111},
  number={3},
  pages={177--208},
  year={2017},
  publisher={Taylor \& Francis}
}

@article{RMC_2017_Pt3,
  title={Geophysical flows under location uncertainty, Part III SQG and frontal dynamics under strong turbulence conditions},
  author={Resseguier, Valentin and M{\'e}min, Etienne and Chapron, Bertrand},
  journal={Geophysical \& Astrophysical Fluid Dynamics},
  volume={111},
  number={3},
  pages={209--227},
  year={2017},
  publisher={Taylor \& Francis}
}

@article{RMHC_2017,
  title={Stochastic modelling and diffusion modes for proper orthogonal decomposition models and small-scale flow analysis},
  author={Resseguier, Valentin and M{\'e}min, Etienne and Heitz, Dominique and Chapron, Bertrand},
  journal={Journal of Fluid Mechanics},
  volume={826},
  pages={888--917},
  year={2017},
  publisher={Cambridge University Press}
}

@article{RPMC_2021,
  title={Quantifying truncation-related uncertainties in unsteady fluid dynamics reduced order models},
  author={Resseguier, Valentin and Picard, Agustin M and M{\'e}min, Etienne and Chapron, Bertrand},
  journal={SIAM/ASA Journal on Uncertainty Quantification},
  volume={9},
  number={3},
  pages={1152--1183},
  year={2021},
  publisher={SIAM}
}

@article{TCM_2021,
  title={Stochastic linear modes in a turbulent channel flow},
  author={Tissot, Gilles and Cavalieri, Andr{\'e} VG and M{\'e}min, Etienne},
  journal={Journal of Fluid Mechanics},
  volume={912},
  pages={A51},
  year={2021},
  publisher={Cambridge University Press}
}

@article{CHLM_2018,
  title={Coarse large-eddy simulations in a transitional wake flow with flow models under location uncertainty},
  author={Chandramouli, Pranav and Heitz, Dominique and Laizet, Sylvain and M{\'e}min, Etienne},
  journal={Computers \& Fluids},
  volume={168},
  pages={170--189},
  year={2018},
  publisher={Elsevier}
}

@article{CMH_2020,
  title={4D large scale variational data assimilation of a turbulent flow with a dynamics error model},
  author={Chandramouli, Pranav and M{\'e}min, Etienne and Heitz, Dominique},
  journal={Journal of Computational Physics},
  volume={412},
  pages={109446},
  year={2020},
  publisher={Elsevier}
}

@article{HM_2017,
  title={Stochastic representation of the Reynolds transport theorem: revisiting large-scale modeling},
  author={Harouna, Souleymane K and M{\'e}min, Etienne},
  journal={Computers \& Fluids},
  volume={156},
  pages={456--469},
  year={2017},
  publisher={Elsevier}
}

@article{BLBM_2021,
  title={Rotating shallow water flow under location uncertainty with a structure-preserving discretization},
  author={Brecht, R{\"u}diger and Li, Long and Bauer, Werner and M{\'e}min, Etienne},
  journal={Journal of Advances in Modeling Earth Systems},
  volume={13},
  number={12},
  pages={e2021MS002492},
  year={2021},
  publisher={Wiley Online Library}
}

@article{CL_1976,
  title={A rational model for Langmuir circulations},
  author={Craik, Alex DD and Leibovich, Sidney},
  journal={Journal of Fluid Mechanics},
  volume={73},
  number={3},
  pages={401--426},
  year={1976},
  publisher={Cambridge University Press}
}

@article{MSM_1997,
  title={Langmuir turbulence in the ocean},
  author={McWilliams, James C and Sullivan, Peter P and Moeng, Chin-Hoh},
  journal={Journal of Fluid Mechanics},
  volume={334},
  pages={1--30},
  year={1997},
  publisher={Cambridge University Press}
}

@article{GHZ_2009,
author = {Nathan Glatt-Holtz and Mohammed Ziane},
title = {{Strong pathwise solutions of the stochastic Navier-Stokes system}},
volume = {14},
journal = {Advances in Differential Equations},
number = {5/6},
publisher = {Khayyam Publishing, Inc.},
pages = {567 -- 600},
year = {2009},
doi = {10.57262/ade/1355867260},
URL = {https://doi.org/10.57262/ade/1355867260}
}

@book{Vallis2017,
  title={Atmospheric and oceanic fluid dynamics},
  author={Vallis, Geoffrey K},
  year={2017},
  publisher={Cambridge University Press}
}

@article{CFH_2019,
	author = {Crisan, Dan and Flandoli, Franco and Holm, Darryl},
	journal = {Journal of Nonlinear Science},
	number = {3},
	pages = {813--870},
	publisher = {Springer},
	title = {Solution properties of a 3{D} stochastic {E}uler fluid equation},
	volume = {29},
	year = {2019}}

@article{GCL_2023,
  title={Existence and uniqueness of maximal solutions to SPDEs with applications to viscous fluid equations},
  author={Goodair, Daniel and Crisan, Dan and Lang, Oana},
  journal={Stochastics and Partial Differential Equations: Analysis and Computations},
  pages={1--64},
  year={2023},
  publisher={Springer}
}

@article{LCM_2023,
	author = {Lang, Oana and Crisan, Dan and M{\'e}min, Etienne},
	date = {2023-02-20},
	doi = {10.1007/s00021-023-00769-9},
	id = {Lang2023},
	isbn = {1422-6952},
	journal = {Journal of Mathematical Fluid Mechanics},
	number = {2},
	pages = {29},
	title = {Analytical Properties for a Stochastic Rotating Shallow Water Model Under Location Uncertainty},
	url = {https://doi.org/10.1007/s00021-023-00769-9},
	volume = {25},
	year = {2023}}

@article{MR_2005,
	author = {Remigijus Mikulevicius and Boris L Rozovskii},
	date = {2005-01-01},
	doi = {10.1214/009117904000000630},
	journal = {The Annals of Probability},
	journal1 = {The Annals of Probability},
	journal2 = {The Annals of Probability},
	month = {1},
	number = {1},
	pages = {137--176},
	title = {Global {L}$_{2}$-solutions of stochastic {N}avier--{S}tokes equations},
	url = {https://doi.org/10.1214/009117904000000630},
	volume = {33},
	year = {2005}}

@article{FGL_2022,
	author = {Flandoli, Franco and Galeati, Lucio and Luo, Dejun},
	date = {2022-01-31},
	doi = {10.1098/rsta.2021.0096},
	journal = {Philosophical Transactions of the Royal Society A: Mathematical, Physical and Engineering Sciences},
	journal1 = {Philosophical Transactions of the Royal Society A: Mathematical, Physical and Engineering Sciences},
	journal2 = {Philosophical Transactions of the Royal Society A: Mathematical, Physical and Engineering Sciences},
	number = {2219},
	pages = {20210096},
	publisher = {Royal Society},
	title = {Eddy heat exchange at the boundary under white noise turbulence},
	type = {doi: 10.1098/rsta.2021.0096},
	url = {https://doi.org/10.1098/rsta.2021.0096},
	volume = {380},
	year = {2022},
	year1 = {2022}}

@article{FGP_10,
	author = {Flandoli, Franco and Gubinelli, Massimiliano and Priola, Enrico },
	date = {2010-04-01},
	doi = {10.1007/s00222-009-0224-4},
	id = {Flandoli2010},
	isbn = {1432-1297},
	journal = {Inventiones mathematicae},
	number = {1},
	pages = {1--53},
	title = {Well-posedness of the transport equation by stochastic perturbation},
	url = {https://doi.org/10.1007/s00222-009-0224-4},
	volume = {180},
	year = {2010}}

@article{FR_2023,
	archiveprefix = {arXiv},
	author = {Franco Flandoli and Francesco Russo},
	eprint = {2305.19293},
	journal = {ArXiv},
	primaryclass = {math.PR},
	title = {Reduced dissipation effect in stochastic transport by Gaussian noise with regularity greater than 1/2},
	volume = {2305.19293},
	year = {2023}}

@article{FL_2021,
	author = {Flandoli, Franco and Luo, Dejun},
	date = {2021-06-01},
	doi = {10.1007/s00440-021-01037-5},
	id = {Flandoli2021},
	isbn = {1432-2064},
	journal = {Probability Theory and Related Fields},
	number = {1},
	pages = {309--363},
	title = {High mode transport noise improves vorticity blow-up control in 3D Navier--Stokes equations},
	url = {https://doi.org/10.1007/s00440-021-01037-5},
	volume = {180},
	year = {2021}}

@article{FGL_2021,
	author = {Franco Flandoli and Lucio Galeati and Dejun Luo},
	doi = {10.1080/03605302.2021.1893748},
	eprint = {https://doi.org/10.1080/03605302.2021.1893748},
	journal = {Communications in Partial Differential Equations},
	number = {9},
	pages = {1757-1788},
	publisher = {Taylor & Francis},
	title = {Delayed blow-up by transport noise},
	url = {https://doi.org/10.1080/03605302.2021.1893748},
	volume = {46},
	year = {2021}}

@article{chueshov2008random,
  title={Random kick-forced 3D Navier--Stokes equations in a thin domain},
  author={Chueshov, Igor and Kuksin, Sergei},
  journal={Archive for Rational Mechanics and Analysis},
  volume={188},
  number={1},
  pages={117--153},
  year={2008},
  publisher={Springer}
}

\end{document}